\documentclass{article}

\usepackage{amsmath, amssymb, amsthm, mathtools}
\usepackage{mathabx,esint}
\usepackage{hyperref}
\usepackage{url}

\usepackage{geometry} 
\usepackage{setspace}

\usepackage[english]{babel}
\usepackage{csquotes} 

\usepackage{enumitem}

\usepackage{threeparttable}

\usepackage{xcolor}

\usepackage{version}
\usepackage[textsize=footnotesize]{todonotes}

\allowdisplaybreaks

\usepackage[style=alphabetic,maxnames=99,minnames=99]{biblatex}
\DeclareRedundantLanguages{English}{english}
\setlist[enumerate,1]{label=(\roman*)}

\newtheorem{theorem}{Theorem}[section]

\newtheorem{lemma}[theorem]{Lemma}
\newtheorem{proposition}[theorem]{Proposition}
\newtheorem{corollary}[theorem]{Corollary}

\theoremstyle{definition}
\newtheorem{definition}[theorem]{Definition}

\theoremstyle{remark}
\newtheorem{remark}[theorem]{Remark}

\newtheorem*{claim*}{Claim}

\numberwithin{equation}{section}

\usepackage{zref-clever}

\AddToHook{env/definition/begin}{%
\zcsetup{countertype={theorem=definition}}}
\AddToHook{env/example/begin}{%
\zcsetup{countertype={theorem=example}}}
\AddToHook{env/lemma/begin}{%
\zcsetup{countertype={theorem=lemma}}}
\AddToHook{env/corollary/begin}{%
\zcsetup{countertype={theorem=corollary}}}
\AddToHook{env/proposition/begin}{%
\zcsetup{countertype={theorem=proposition}}}
\AddToHook{env/remark/begin}{%
\zcsetup{countertype={theorem=remark}}}
\AddToHook{env/observation/begin}{\zcsetup{countertype={theorem=observation}}}

\zcRefTypeSetup{item}{name-sg= ,name-pl= }
\zcRefTypeSetup{equation}{name-sg= ,name-pl= }

\zcRefTypeSetup{observation}{name-sg=observation,name-pl=observations,Name-sg=Observation,Name-pl=Observations}

\zcRefTypeSetup{section}{cap}
\zcRefTypeSetup{figure}{cap}
\zcRefTypeSetup{table}{cap}
\zcRefTypeSetup{theorem}{cap}
\zcRefTypeSetup{definition}{cap}
\zcRefTypeSetup{example}{cap}
\zcRefTypeSetup{lemma}{cap}
\zcRefTypeSetup{corollary}{cap}
\zcRefTypeSetup{proposition}{cap}
\zcRefTypeSetup{remark}{cap}
\zcRefTypeSetup{observation}{cap}

\DeclareMathOperator{\supp}{\mathrm{supp}}

\newcommand{\ft}{\widehat}
\newcommand{\mc}{\mathcal}
\newcommand{\mr}{\mathrm}

\newcommand{\M}{\mc{M}}

\newcommand{\Mu}[1]{\mc{M}_{#1}^{\mr u}}
\newcommand{\Mc}[1]{\mc{M}_{#1}^{\mr c}}
\newcommand{\MZu}{\Mu\Z}
\newcommand{\MZc}{\Mc\Z}
\newcommand{\MRu}{\Mu\R}
\newcommand{\MRc}{\Mc\R}
\newcommand{\Cu}[1]{C^{#1,\mr u}}

\newcommand{\CZu}{\Cu\Z}

\newcommand{\CRu}{\Cu\R}

\DeclareMathOperator{\var}{Var}
\newcommand{\intav}[1]{\mathchoice {\mathop{\vrule width 6pt height 3 pt depth  -2.5pt
\kern -8pt \intop}\nolimits_{\kern -6pt#1}} {\mathop{\vrule width
5pt height 3  pt depth -2.6pt \kern -6pt \intop}\nolimits_{#1}}
{\mathop{\vrule width 5pt height 3 pt depth -2.6pt \kern -6pt
\intop}\nolimits_{#1}} {\mathop{\vrule width 5pt height 3 pt depth
-2.6pt \kern -6pt \intop}\nolimits_{#1}}}

\renewcommand{\leq}{\leqslant}
\renewcommand{\geq}{\geqslant}

\newcommand\R{\mathbb{R}}

\newcommand\Z{\mathbb{Z}}

\newcommand\N{\mathbb{N}}
\newcommand\BV{\mathrm{BV}}

\newcommand\loc{\mathrm{loc}}
\newcommand\cpt{\mathrm{c}}
\newcommand\cmod[2]{#1\%#2}
\newcommand\lm[1]{\left|#1\right|}
\newcommand\cm[1]{\left|#1\right|}

\newcommand{\intd}{\mathop{}\!\mathrm{d}}
\newcommand\av[3]{A^{#1}_{#2}#3}

\newcommand\sungyi[1]{\todo[color=cyan!30]{\textsc{Sung-Yi:} #1}}
\newcommand\julian[1]{\todo[color=blue!30]{\textsc{Julian:} #1}}

\newcommand\fulltitle{Sharp higher order regularity of discrete maximal functions}
\title{\fulltitle}

\newcommand\addressvt{Department of  Mathematics, Virginia Tech,  225 Stanger Street, Blacksburg, VA 24061-1026, USA}
\newcommand\addressvtshort{Virginia Tech}

\author{
Sung-Yi Liao\footnote{
\addressvt,
\texttt{sungyi@vt.edu}
},\ \ 
Jos\'e Madrid\footnote{
\addressvtshort,
\texttt{josemadrid@vt.edu}
},\ \ 
Eyvindur Palsson\footnote{
\addressvtshort,
\texttt{palsson@vt.edu}
},\ \ 
Julian Weigt\footnote{
Room 121, Leonardo Building, Abdus Salam International Centre for Theoretical Physics, Strada Costiera 11, 34151 Trieste, Italy,
\texttt{jweigt@ictp.it}
}\ \ 
}

\begin{document}

\maketitle

\begin{abstract}
We derive sharp $\ell^p(\Z)$ bounds for the $k$th derivative of the discrete uncentered maximal operator applied to characteristic functions $f:\Z\rightarrow\{0,1\}$ in the cases $k=0,1,2$. When $k=1,2$ these are the first sharp bounds for derivatives of a Hardy-Littlewood maximal function in continuous or discrete settings when $1<p<\infty$.
We also establish several lower bounds for $k\geq 3$ and for general functions $f:\Z\rightarrow\R$.
\end{abstract}

\begingroup
\begin{NoHyper}%
\renewcommand\thefootnote{}\footnotetext{%
2020 \textit{Mathematics Subject Classification.} 26A45, 42B25, 39A12, 46E35, 46E39, 05C12.\\%
\textit{Key words and phrases.} Maximal operators; higher derivatives; bounded variation; characteristic functions.%
}%
\addtocounter{footnote}{-1}%
\end{NoHyper}%
\endgroup

\section{Introduction}
\subsection{Operator norms for maximal operators}
For a locally integrable function $f:\mathbb{R}\to\mathbb{R}$, its centered Hardy-Littlewood maximal function $\MRc f$ is defined by
\begin{align}\label{def:centered}
\av\R I f
&:=
\frac{1}{\lm I}\int_I|f(y)|\intd y
,&
\MRc f(x)
&:=
\sup_{\emptyset\neq I=(x-r,x+r)}
\av\R I f
,
\end{align}
where $\lm I$ denotes the Lebesgue measure of the interval $I$.
The centered maximal operator was introduced by Hardy and Littlewood in 1930 \cite{hardy1930maximal} and has since become a fundamental tool in harmonic analysis.

By the Lebesgue differentiation theorem, \(f \leq \MRc f\) almost everywhere. Thus, the maximal operator \(\MRc\) provides a natural way to control the size of a function. A natural question is: how effective is this estimate? Hardy and Littlewood showed that \(\MRc\) is a bounded operator from \(L^p(\mathbb{R}^d)\) to \(L^p(\mathbb{R}^d)\) if and only if \(1<p\leq\infty\). For \(p=1\), the maximal operator maps \(L^1(\mathbb{R})\) to \(L^{1,\infty}(\mathbb{R})\).

Several results are known concerning the operator norm of the maximal operator. It is immediate that the operator norm of the centered maximal operator on \(L^\infty(\mathbb{R})\) is \(1\). Melas obtained the operator norm when \(\MRc:L^1(\mathbb{R})\rightarrow L^{1,\infty}(\mathbb{R})\) and we recall the statement next.
\begin{theorem}[\cite{melas2003best}]
For every $f\in L^1(\mathbb{R})$ and for every $\lambda>0$, we have
\[\lm{\{\MRc f>\lambda\}}\leq\frac{11+\sqrt{61}}{12\lambda}{\|f\|}_{L^1(\mathbb{R})},\]
and this is sharp.
\end{theorem}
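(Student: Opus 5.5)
The plan follows Melas' strategy. First reduce to finite sums of point masses, then prove a sharp covering inequality by induction on the number of masses, and finally exhibit near-extremal configurations.

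\emph{Reduction.} Since $\MRc$ commutes with dilations and $\lm{\{\MRc f>\lambda\}}$ scales correctly, it suffices to treat $\lambda=1$. I would also extend $\MRc$ to finite positive Borel measures $\mu$ by $\MRc\mu(x)=\sup_{r>0}\mu((x-r,x+r))/(2r)$. The claimed inequality for all $f\in L^1(\R)$ then follows from the inequality for finite sums of Dirac masses $\mu=\sum_{i=1}^n a_i\delta_{x_i}$. To see this, approximate $|f|\,dx$ weakly by such sums. The level sets $\{\MRc\mu>1\}$ are open, and for a compact $K\subset\{\MRc f>1\}$ finitely many intervals witness the inequality with a strict margin. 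These strict inequalities survive the approximation, so $|K|$ is controlled by the discrete bound.

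\emph{Core inequality.} For $\mu=\sum a_i\delta_{x_i}$ the set $E=\{\MRc\mu>1\}$ is a finite union of open intervals. Each $x\in E$ admits a centered interval $I_x=(x-r,x+r)$ with $\mu(I_x)>2r$, so that interval has \enquote{excess density}. The goal is
\[
|E|\le c\,\mu(\R),\qquad c=\tfrac{11+\sqrt{61}}{12},
\]
where $c$ is the positive root of $12c^2-22c+5=0$. I would prove this by induction on $n$, in a strengthened form that also keeps track of how $E$ sits around the outermost mass points, since that boundary information is what gets used when configurations are glued together. The strengthened statement would split the mass of an extremal point into a part \enquote{used} on each side. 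It would assert that the measure of $E$ to the left of $x_1$ is controlled by a specific fraction of $a_1$ plus the rest, and symmetrically on the right. The induction step then combines two adjacent blocks of masses and analyses how the centered intervals reaching across the gap overlap. This produces a small optimization problem, essentially a linear program in the overlap lengths and mass splittings. The optimal constant solves that program, and the quadratic $12c^2-22c+5=0$ should emerge as the balance condition between the worst \enquote{one-sided} and \enquote{two-sided} configurations.

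\emph{Main obstacle.} The hard step is identifying the right inductive hypothesis, i.e.\ the exact boundary weights, so that the induction closes with the sharp constant rather than the trivial constant $2$ given by Vitali-type covering with bounded overlap. My first attempt would be to posit boundary coefficients $\alpha,\beta$ as unknowns in the hypothesis. I would then work through the finitely many overlap cases between consecutive blocks and require each case to be consistent, choosing $\alpha,\beta,c$ to minimize $c$. I expect the binding cases to involve three consecutive masses whose witnessing intervals pairwise overlap.

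\emph{Sharpness.} For the lower bound I would take the extremal solution of that optimization and realize it as an explicit measure: finitely many point masses with specific ratios and spacings, periodically repeated or truncated. On this configuration $E$ is computed exactly and $|E|/\mu(\R)\to c$. Smoothing the Dirac masses into narrow bumps then transfers the example to $L^1$ functions.
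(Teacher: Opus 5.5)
\textbf{There is a genuine gap: you describe a plan for the two substantive steps rather than an argument for them.} The paper gives no proof of this statement; it only quotes it from Melas, whose proof is long and delicate. Measured against what a proof must contain, here is where your proposal stands.

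The reduction to finite sums of point masses is fine. Replace \enquote{weak} approximation by, e.g., the Riemann-sum discretization of $|f|\,dx$, whose interval masses converge uniformly. Smoothing a point-mass example into $L^1$ also works, via Fatou.

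The core, however, is missing:
\begin{itemize}
\item The sharp bound $|E|\le c\,\mu(\R)$ for $\mu=\sum a_i\delta_{x_i}$ is never proved. The inductive hypothesis is left with undetermined coefficients $\alpha,\beta$, and the gluing cases are neither listed nor checked.
\item That $12c^2-22c+5=0$ \enquote{should emerge} as a balance condition is a guess, not a derivation. Both roots of this quadratic are positive; $c$ is the larger one.
\item For sharpness you exhibit no configuration. \enquote{Take the extremal solution of that optimization} refers to an optimization you never set up. Nothing in the proposal produces a family of measures whose ratio $|E|/\mu(\R)$ approaches $1.5675\ldots$.
\end{itemize}

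There is also a concrete reason to doubt that an induction carrying only boundary information can close. For $x_1<\dots<x_n$ and $A_{ij}=a_i+\dots+a_j$, one checks directly from the definition that
\[
\{\MRc\mu>1\}=\bigcup_{1\le i\le j\le n}\Bigl(x_j-\tfrac{A_{ij}}{2},\ x_i+\tfrac{A_{ij}}{2}\Bigr).
\]
Glue a left block $\{1,\dots,m\}$ to a right block $\{m+1,\dots,n\}$. Then $E(\mu_L)$ and $E(\mu_R)$ are unchanged, and the only new pieces are the intervals with $i\le m<j$. Their endpoints depend on $A_{im}+A_{m+1,j}$, i.e.\ on every tail sum of the left block and every head sum of the right block, not just on the outermost masses $a_m,a_{m+1}$.

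A hypothesis that records how $E$ sits near the extreme points through weights on $a_1$ and $a_n$ does not see these quantities. So it is unclear how it could bound the glued set, and the gluing step is not a finite case analysis in a few overlap lengths and mass splittings. Controlling exactly these long-range interactions is where the difficulty of Melas' theorem lies, and your proposal offers no mechanism for it.
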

The exact value of the operator norm of the centered Hardy-Littlewood maximal operator on \(L^p(\mathbb{R})\) remains unknown.

The uncentered Hardy-Littlewood maximal operator $\MRu$ considers averages over all intervals containing $x$ and not only those centered in $x$ like its centered variant $\MRc$, i.e.\ 
\begin{equation}\label{def:uncentered}
\MRu f(x)
:=
\sup_{I\ni x}
\av\R I f
,
\end{equation}
where the supremum is taken over all open intervals containing $x$.
Similar to the centered maximal operator, it maps $L^1(\R)$ to $L^{1,\infty}(\mathbb{R})$ and $L^p(\R)$ to $L^p(\R)$ for $1<p\leq\infty$ and its operator norm on $L^\infty(\mathbb{R})$ is 1.
For $1<p<\infty$ the exact value of its operator norm on $L^p(\R)$ was found by Grafakos and Montgomery.
\begin{theorem}[\cite{GM3}]\label{operator_norm_of_uncentered_maximal_operator_in_continuous_setting} 
\label{thm: GM}
For $1<p<\infty$,
the operator norm of the uncentered maximal operator $\MRu:L^p(\mathbb{R})\to L^p(\mathbb{R})$, is the unique positive solution of the equation
\begin{equation}\label{eq: GM}
(p-1)x^p-px^{p-1}-1=0.
\end{equation}
\end{theorem}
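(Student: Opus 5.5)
Following Grafakos and Montgomery, the plan is to reduce to a one-sided maximal problem via the rising sun lemma and then compare with a Hardy-type operator whose sharp constant is known. First, it suffices to treat nonnegative $f\in L^p(\R)$ that are continuous with compact support, by density and monotone convergence. For such $f$ and $\lambda>0$, Riesz's rising sun lemma describes the set $\{\MRu f>\lambda\}$ as a disjoint union of open intervals $I_j$. On each $I_j$, the average of $f$ equals exactly $\lambda$. This yields the equimeasurable description
\begin{equation*}
\lm{\{\MRu f>\lambda\}}=\frac1\lambda\int_{\{\MRu f>\lambda\}} f .
\end{equation*}
This identity is stronger than the weak-type inequality. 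Combined with the layer-cake formula, it gives
\begin{equation*}
\|\MRu f\|_p^p=p\int_0^\infty\lambda^{p-2}\int_{\{\MRu f>\lambda\}}f\intd x\intd\lambda=\frac{p}{p-1}\int f\,(\MRu f)^{p-1}.
\end{equation*}

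The second step is to rearrange. The distribution identity shows that $\MRu f$ is controlled by the Hardy operator acting on the decreasing rearrangement: with $f^*$ the decreasing rearrangement on $(0,\infty)$,
\begin{equation*}
(\MRu f)^*(t)\le \frac1t\int_0^t f^*(s)\intd s=:Hf^*(t).
\end{equation*}
Equality holds when $f$ is symmetric decreasing, and then $\MRu f(x)=\sup_{r}$ of averages over intervals having $x$ as an endpoint. However, Hardy's inequality has constant $p'=p/(p-1)$, which is not the claimed root of \eqref{eq: GM}, so this step loses too much. The actual extremal configuration is subtler: for $f=|x|^{-1/p}$-type profiles, the best interval containing $x$ is not centered at the origin but positioned optimally. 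I will therefore not apply Hardy directly. Instead, for even decreasing $f$, I will compute
\begin{equation*}
\MRu f(x)=\sup_{a\le -x\ldots}\ \frac{1}{b-a}\int_a^b f
\end{equation*}
for $x>0$, with the interval $[a,b]\ni x$ and the optimal $b=x$. This gives $\MRu f(x)=\sup_{0\le s\le x}\frac{1}{x+s}\int_{-s}^x f$, an operator acting on the profile $g=f|_{(0,\infty)}$:
\begin{equation*}
Tg(x)=\sup_{0\le s\le x}\frac{\int_0^s g+\int_0^x g}{x+s}.
\end{equation*}

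The third step is to reduce general $f$ to symmetric decreasing $f$, showing that the extremal problem is attained at symmetric decreasing functions. This follows from a rearrangement inequality $\|\MRu f\|_p\le\|\MRu (f^\star)\|_p$, where $f^\star$ is the symmetric decreasing rearrangement. That inequality is obtained from the distribution identity by comparing the sets $\{\MRu f>\lambda\}$ with the superlevel sets $\{\MRu f^\star>\lambda\}$. Then $T$ is homogeneous of degree $-1$ in dilation. Testing with $g(x)=x^{-1/p}$ (truncated), one computes $T g(x)=c\,x^{-1/p}$, with $c=\sup_{\sigma\in[0,1]}\frac{p'(1+\sigma^{1/p'})}{1+\sigma}$. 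Optimizing in $\sigma$ and setting $x=c$ should reduce to $(p-1)c^p-pc^{p-1}-1=0$. The upper bound then follows from a Schur/Hardy-type test with weight $x^{-1/p}$ applied to the sublinear positive operator $T$ after linearization. The lower bound follows from truncations $g_\varepsilon=x^{-1/p}\mathbf 1_{(\varepsilon,1)}$, with the ratio tending to $c$.

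The main obstacle is the rearrangement reduction in the third step, namely proving that symmetric decreasing functions are extremal for the uncentered operator in $L^p$. If a clean rearrangement inequality fails, I would instead run the argument of Grafakos–Montgomery directly. That argument uses the identity $\|\MRu f\|_p^p=p'\int f(\MRu f)^{p-1}$ together with a pointwise comparison along the intervals $I_j$, which tracks the optimal two-sided extensions in order to get the constant from \eqref{eq: GM} rather than $p'$.
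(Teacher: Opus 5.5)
Your upper bound rests on an identity that is false for the two-sided operator. The rising sun lemma gives $|\{M^+f>\lambda\}|=\frac1\lambda\int_{\{M^+f>\lambda\}}f$ only for the one-sided operator $M^+f(x)=\sup_{h>0}\frac1h\int_x^{x+h}f$. The set $\{\MRu f>\lambda\}$ is the union of the two one-sided superlevel sets, and its components need not have average $\lambda$.

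Take $f=\chi_{[0,1]}$ and $0<\lambda<1$. Then $\{\MRu f>\lambda\}=(1-\frac1\lambda,\frac1\lambda)$, which has measure $\frac2\lambda-1>\frac1\lambda=\frac1\lambda\int_{\{\MRu f>\lambda\}}f$. Also $\|\MRu f\|_p^p=\frac{p+1}{p-1}\neq\frac{p}{p-1}=p'\int f(\MRu f)^{p-1}$.

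Worse, your identity together with H\"older would give $\|\MRu\|\le p'$. But the positive root $c_p$ of $(p-1)x^p-px^{p-1}-1=0$ exceeds $p'$, since the polynomial equals $-1$ at $x=p'$. So the identity contradicts the very theorem you are proving.

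The same example refutes $(\MRu f)^*\le Hf^*$, because $(\MRu f)^*(t)=\frac2{1+t}>\frac1t$ for $t>1$. Hardy's constant $p'$ is not too lossy; it is too small.

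Your rearrangement step $\|\MRu f\|_p\le\|\MRu f^\star\|_p$ is justified only via this identity, and your fallback invokes the same false formula. So no valid upper-bound argument remains.

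The Schur test does not rescue it. The maximizing $s(x)$ depends on $g$, so the adjoint condition must hold for every measurable $s(x)\in[0,x]$. For $p=2$ with weight $y^{-1/4}$, the two Schur constants are $1+\sqrt2$ and $\pi$, which gives only $\sqrt{(1+\sqrt2)\pi}>1+\sqrt2$.

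The missing idea is the correct distributional inequality for $\MRu$. It is obtained in \cite{GM3} by the covering argument that the paper transplants to $\Z$ in the proof of Theorem~\ref{thm:c0p}. A finite family of intervals with averages exceeding $\lambda$ contains a subfamily with the same union $U$. This subfamily splits into two families of pairwise disjoint intervals, with unions $U_0,U_1$. Inclusion--exclusion then gives
\[
\lambda|U|\le\int_{U_0}f+\int_{U_1}f-\lambda|U_0\cap U_1|=\int_U f+\int_{U_0\cap U_1}(f-\lambda)\le\int_U f+\int_{\{f>\lambda\}}(f-\lambda),
\]
hence $\lambda|\{\MRu f>\lambda\}|\le\int_{\{\MRu f>\lambda\}}f+\int_{\{f>\lambda\}}(f-\lambda)$. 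The overlap term is exactly what your identity omits.

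Integrating against $p\lambda^{p-2}\intd\lambda$ yields $\|\MRu f\|_p^p\le p'\int f(\MRu f)^{p-1}+\frac1{p-1}\|f\|_p^p$. H\"older with $x=\|\MRu f\|_p/\|f\|_p$ then gives $(p-1)x^p-px^{p-1}-1\le0$ directly, with no rearrangement or Schur test.

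Your computation with $|x|^{-1/p}$ is correct: the optimal $\sigma$ satisfies $c=\sigma^{-1/p}$, which turns into the defining equation. It does supply the matching lower bound.
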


In this manuscript we study the discrete analog of the uncentered maximal operator. For $f:\mathbb{Z}\to\mathbb{R}$, and any nonempty interval $I\subset\Z$ we
define
\begin{align*}
\av\Z If
&:=
\frac{1}{\cm I}\sum_{i\in I}|f(i)|
,&
\MZu f(n)
&:=
\sup_{I\ni n}
\av\Z If
,
\end{align*}
where the supremum is taken over all discrete intervals $I$ containing $n$ and $\cm I$ denotes the counting measure of $I$.
The boundedness of \( \MZu \) on \( \ell^p(\mathbb{Z}) \) for \( 1 < p \leq \infty \) follows by an argument analogous to the continuous setting.
This was observed, for instance, in the book by Stein and Murphy \cite{stein1993harmonic}, though the theme dates back at least to the classic paper of Riesz \cite{Riesz28}, as discussed in \cite{SW99}.

\subsection{First derivative bounds for maximal operators}
Beyond size estimates, control of oscillation is another aspect that has lately attracted considerable attention.
In 1997, Kinnunen established the following oscillation estimate.
\begin{theorem}[\cite{kinnunen1997hardy}]
For every $1<p\leq\infty$, there exists $C_p<\infty$ such that 
\begin{equation}\label{eq:Kinnunen}
{\|(\MRc f)'\|}_{L^p(\mathbb{R})}\leq C_p{\|f'\|}_{L^p(\mathbb{R})}.
\end{equation}
for all functions $f\in W^{1,p}(\mathbb{R})$.
\end{theorem}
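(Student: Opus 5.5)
Kinnunen's estimate follows from the pointwise bound that the maximal function of a Sobolev function is weakly differentiable with derivative controlled by the maximal function of the derivative:
\[
|(\MRc f)'| \leq \MRc (f') \quad\text{almost everywhere.}
\]
Once this is established, the Hardy--Littlewood $L^p$ bound for $1<p\leq\infty$ gives \eqref{eq:Kinnunen} with $C_p$ equal to the operator norm of $\MRc$ on $L^p(\R)$.

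\textbf{Step 1: translation commutes with the maximal operator.} For $h\in\R$ set $f_h(x)=f(x+h)$. Then $\MRc f_h=(\MRc f)_h$, since averages over centered intervals are translation invariant. Sublinearity gives
\[
|\MRc f(x+h)-\MRc f(x)|\leq \MRc(f_h-f)(x).
\]
This uses $\big|\sup_r a_r-\sup_r b_r\big|\leq\sup_r|a_r-b_r|$ together with $\big||f_h|-|f|\big|\leq|f_h-f|$.

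\textbf{Step 2: divide by $h$ and take the limit.} By the $L^p$ bound,
\[
\Big\|\frac{(\MRc f)_h-\MRc f}{h}\Big\|_{L^p}\leq C_p\Big\|\frac{f_h-f}{h}\Big\|_{L^p}\leq C_p\|f'\|_{L^p}.
\]
The last inequality is the standard difference-quotient estimate for $W^{1,p}$ functions, obtained by writing $f_h-f=\int_0^h f'(\cdot+t)\intd t$ and applying Minkowski's inequality.

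\textbf{Step 3: conclude weak differentiability.} For $1<p<\infty$, bounded difference quotients in $L^p$ imply that $\MRc f\in L^p$ has a weak derivative with $\|(\MRc f)'\|_p\leq C_p\|f'\|_p$. Here $\MRc f\in L^p$ holds because $f\in L^p$. The argument is reflexivity: extract a weakly convergent subsequence of the difference quotients and identify its limit through integration by parts against test functions.

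For $p=\infty$, $W^{1,\infty}$ functions are Lipschitz. Step 1 then shows $\MRc f$ is Lipschitz with constant $\|f'\|_\infty$, because $\MRc$ has norm $1$ on $L^\infty$. Rademacher's theorem then gives $(\MRc f)'$ almost everywhere with the bound.

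\textbf{Main obstacle.} The only delicate point is justifying the passage to the limit in Step 3. One needs to know that $\MRc f$ is locally integrable and that the weak-limit argument applies. Both are standard via reflexivity of $L^p$ for $1<p<\infty$. I expect no further difficulty, since the sublinearity and translation-invariance steps are direct.
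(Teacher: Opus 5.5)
The paper does not prove this theorem; it quotes it from Kinnunen's work as background, so there is no in-paper argument to compare with. Judged on its own, your argument is a correct proof of the stated inequality, by a different route from Kinnunen's.

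\textbf{Why your steps hold.} Step 1 is legitimate because $f\in W^{1,p}(\R)$ is bounded and continuous, so $\MRc f$ is finite everywhere and the subtraction makes sense. Step 2 gives $\|((\MRc f)_h-\MRc f)/h\|_{L^p}\leq C_p\|f'\|_{L^p}$, with $C_p$ the operator norm of $\MRc$ on $L^p(\R)$. Step 3 is the standard difference-quotient criterion. You do not actually need reflexivity there: the functional $\phi\mapsto-\int\MRc f\,\phi'$ on $C_c^\infty$ is bounded by $C_p\|f'\|_{L^p}\|\phi\|_{L^{p'}}$. By duality it is therefore represented by some $g$ with $\|g\|_{L^p}\leq C_p\|f'\|_{L^p}$, and this version also covers $p=\infty$. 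Your Lipschitz argument at $p=\infty$ is fine too and gives $C_\infty=1$.

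\textbf{The framing needs fixing.} Your opening paragraph says the proof goes through the pointwise bound $|(\MRc f)'|\leq\MRc(f')$ a.e., but Steps 1--3 never establish it. They prove the norm bound directly, which is all the statement requires. If you want the pointwise bound, add one step. Write $D_hu:=(u_h-u)/h$. Since $D_hf\to f'$ in $L^p$ and $|\MRc g_1-\MRc g_2|\leq\MRc(g_1-g_2)$, you get $\MRc(D_hf)\to\MRc(f')$ in $L^p$. Meanwhile $D_h\MRc f\to(\MRc f)'$ weakly. Testing $|D_h\MRc f|\leq\MRc(D_hf)$ against $\operatorname{sgn}((\MRc f)')\chi_E$ for bounded measurable $E$ then gives the pointwise inequality.

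\textbf{Comparison with Kinnunen's route.} Kinnunen's original argument obtains the pointwise bound directly. Each average $u_r(x):=\frac{1}{2r}\int_{x-r}^{x+r}|f(y)|\intd y$ is Sobolev with $|u_r'(x)|\leq\frac{1}{2r}\int_{x-r}^{x+r}|f'|\leq\MRc(f')(x)$. Then $\MRc f$ is the supremum of the $u_r$ over rational $r>0$, and a weak-compactness lemma in $W^{1,p}$ transfers the gradient bound to the supremum. Your translation argument is shorter and avoids that lemma about suprema, but it relies essentially on exact translation invariance of $\MRc$. The supremum-of-averages route yields the stronger pointwise inequality and adapts to settings without translation invariance, such as local maximal functions on domains.
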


Regularity bounds of this type have been studied extensively in a variety of settings studied during the decades since.
Most attention has focused on the endpoint case $p=1$.
Surprisingly, although the centered maximal operator $\MRc$ does not map $L^1(\mathbb{R})$ into itself, the corresponding gradient bound \zcref{eq:Kinnunen} has nevertheless been proved for $p=1$ in several special cases.

At this endpoint, one can consider each of the two following quantities: the $L^1(\mathbb{R})$-norm $\|f'\|_{L^1(\mathbb{R})}$ of the weak derivative of an absolutely continuous function, and the variation $\var(f)$ of a function $f$ in the larger space $\BV(\mathbb{R})$ of functions of bounded variation.
Most of the results below are known in both formulations, although we state each result in the setting in which it first appeared.

Perhaps the most notable result is Kurka's proof of the endpoint gradient bound for the centered Hardy-Littlewood maximal operator in one dimension.

\begin{theorem}[\cite{Ku}]
There exists a constant $C<\infty$ such that for every function $f:\mathbb{R}\to\mathbb{R}$ of bounded variation
\begin{equation}\label{eq:Kurka}
\var(\MRc f)\leq C\var(f).
\end{equation}
\end{theorem}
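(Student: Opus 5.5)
Kurka's theorem, $\var(\MRc f)\leq C\var(f)$, is a deep result. The plan below follows the strategy of Kurka's original argument.

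\emph{Reductions.} Since $\var(|f|)\leq\var(f)$, I assume $f\geq0$. By lower semicontinuity of the variation under pointwise convergence, I may also assume $f$ is, say, a step function with finitely many jumps. Then $\MRc f$ is continuous wherever it exceeds $f$. On the set $\{\MRc f>f\}$, any local maximum of $\MRc f$ at a point $x$ is attained by some radius $r>0$. The key local fact is the standard observation that $\MRc f$ has no strict local maxima away from points where it is controlled by $f$ itself. It fails for the centered operator, unlike the uncentered one, so I will instead need a quantitative substitute.

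\emph{Main step (and main obstacle).} I decompose the variation of $\MRc f$ into contributions from peaks. For each local maximum $x$ of $\MRc f$ with optimal radius $r_x$, I compare $\MRc f(x)=\av\R{(x-r_x,x+r_x)}f$ with the variation of $f$ on a neighbourhood of $(x-r_x,x+r_x)$. Following Kurka, I organize peaks by a scale parameter $r_x$ and perform an induction over dyadic scales. At each scale I bound the total drop of $\MRc f$ between consecutive peaks by the oscillation of $f$ on the relevant interval, plus a geometrically decaying error passed to the next scale. The difficulty is that a single oscillation of $f$ can be ``seen'' by many peaks at many scales, so naive counting loses a logarithm.

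\emph{How to avoid the loss.} To handle this, I show that when two peaks at comparable scale share a significant part of their averaging intervals, the lower peak's contribution is absorbed by a fixed fraction of the higher one. This charging argument must be made quantitative so that the sum over scales is a convergent geometric series. Summing gives $\var(\MRc f)\leq C\var(f)$ with an absolute $C$ (Kurka's constant is large, around $240004$). I expect this combinatorial charging across scales to be by far the hardest part. In practice, in the context of this paper, I would simply cite \cite{Ku}, since the result is quoted as background rather than proved here.
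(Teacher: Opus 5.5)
The paper does not prove this theorem. It quotes it from \cite{Ku} as background, so your closing remark (simply cite Kurka) is exactly what the paper does, and as a citation it is fine.

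Read as an argument, however, your sketch has a genuine gap, and it sits exactly where the content of the theorem lies. For the centered operator, strict local maxima of $\MRc f$ inside $\{\MRc f>f\}$ really occur. For example, $f=\chi_{[-2,-1]\cup[1,2]}$ has one at $0$: there $\MRc f(0)=\frac12$, while $\MRc f(\pm\varepsilon)=\frac{1}{2+\varepsilon}$ for small $\varepsilon>0$. Each such peak must be paid for by variation of $f$ inside its averaging interval, and the same piece of $\var(f)$ can lie inside the averaging intervals of many peaks at many positions and scales. Your ``charging'' step is meant to handle this, but it is only asserted. Specifically, you do not say:
\begin{itemize}
\item what ``share a significant part of their averaging intervals'' means, or what the absorbed fraction is;
\item why the error passed from one dyadic scale to the next decays \emph{geometrically}. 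A mere per-scale bound reproduces exactly the logarithmic loss you mention, because the number of scales is unbounded;
\item how peaks of very different scales, nested inside one another's averaging intervals, are handled. Your absorption rule only addresses comparable scales.
\end{itemize}
What is missing is a concrete lemma bounding the total height of peaks of $\MRc f$ that a given portion of $\var(f)$ can support. That is where all the work in \cite{Ku} goes, and your outline neither supplies nor formulates it. So the proposal describes the shape of a proof rather than giving one.

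Several preliminary claims also need repair.
\begin{itemize}
\item The sentence saying $\MRc f$ has no strict local maxima in the detachment set describes $\MRu f$. For $\MRc f$ it is false, as the example above shows.
\item The claim that $\MRc f$ is continuous wherever $\MRc f>f$ depends on the representative chosen at jumps. For $f=\chi_{(0,1)}$ one has $\MRc f(0)=\frac12>0=f(0)$, yet $\MRc f=1$ on $(0,1)$. The claim becomes true if you take $f$ upper semicontinuous: then near such a point only radii bounded below matter, and those averages are equi-Lipschitz.
\item Lower semicontinuity of the variation alone does not justify the reduction to step functions; you also need $\MRc f_n\to\MRc f$ pointwise. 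You get this by choosing step functions $f_n$ with finitely many jumps, $\var(f_n)\leq\var(f)$ and $\|f_n-f\|_{L^\infty}\to0$. Such $f_n$ exist because $f$ has one-sided limits everywhere and limits at $\pm\infty$. Then use $|\MRc g-\MRc h|\leq\|g-h\|_{L^\infty}$.
\end{itemize}
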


For radially decreasing functions, equality in \zcref{eq:Kurka} is attained with $C=1$.
It is widely conjectured that \zcref{eq:Kurka} holds with $C=1$ in full generality, but this remains open.
Bilz and the fourth author proved this in the case of characteristic functions, where we have to consider the variation instead of the $L^1(\mathbb{R})$ norm of the weak derivative.

\begin{theorem}[\cite{BW}]
Let $f:\mathbb{R}\to\{0,1\}$ be a function of bounded variation.
Then
\[
\var(\MRc f)
\leq
\var(f)
.
\]
Equality holds if and only if $f$ is constant or the set $\{x\in\mathbb{R}\mid f(x)=1\}$ is a bounded interval of positive length.
\end{theorem}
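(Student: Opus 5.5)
We prove the statement for $f=\mathbf 1_E$ with $E\subset\R$ a set whose indicator has bounded variation. Then $E$ agrees almost everywhere with a finite disjoint union of intervals $J_1<J_2<\dots<J_N$, since its boundary has at most $\var(f)/2$ points. The plan is to argue by induction on $N$, or more precisely by a direct local analysis of $u:=\MRc f$ on each gap between consecutive intervals and on the two unbounded components of $\R\setminus E$.

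First we dispose of the trivial cases. If $f$ is constant, both sides vanish. If $E$ is a single bounded interval $J=(a,b)$ of positive length, then $u=1$ on $J$ and
\[
u(x)=\min\Bigl(1,\frac{|J|}{2\,\mathrm{dist}(x,\text{far endpoint})}\Bigr)\cdot(\text{appropriate form})
\]
for $x\notin J$, which is decreasing away from $J$. Hence $\var(u)=2=\var(f)$, giving the equality case. For the remaining half-lines and unbounded cases we check directly that the variation is $\leq\var(f)$.

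The core step is the following. On a gap $G=(b_i,a_{i+1})$ we have $u=1$ at the endpoints, by density, and $u\le 1$ throughout. So the variation of $u$ on $\overline G$ equals $2\bigl(1-\min_G u\bigr)$ plus twice the depth of every additional local minimum/maximum oscillation of $u$ inside $G$. We aim to show two things:
\begin{enumerate}
\item $u$ is unimodal-from-below on each gap, i.e.\ decreasing and then increasing, with a single local minimum, so that no extra oscillation occurs;
\item the total contribution $\sum_i 2\bigl(1-\min_{G_i}u\bigr)$ plus the contributions of the outer half-lines is at most $2N=\var(f)$.
\end{enumerate}
Since each gap already costs $2$ in $\var(f)$, it suffices to show that $1-\min_G u<1$. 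This holds because $\min_G u>0$ for a bounded gap. Along the outer half-lines $u$ decreases monotonically from $1$ to its limit, costing at most $1$ each, which is matched by the boundary jumps of $f$ there.

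Step (i) is the main obstacle. The centered maximal function on a gap is a supremum of averages $r\mapsto |E\cap(x-r,x+r)|/2r$, and the optimal radius can jump between different intervals $J_j$. This can produce a local maximum of $u$ strictly inside $G$, as happens in Kurka-type examples for general functions.

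First, for $x\in G$ we would write $u(x)=\max_j \phi_j(x)$, where $\phi_j$ is the best average when the ball's boundary is at an endpoint of $J_j$. Optimal balls may be taken with an endpoint at $\partial E$, and the values $\phi_j(x)$ are explicit rational functions of $x$.

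Second, we would show that any interior local maximum of $u$ in $G$ forces a "shadowing" configuration in which some interval $J_j$ far away dominates. In that case we would replace the per-gap bookkeeping by a global charging argument: we assign each oscillation of $u$ to a pair of boundary points of $E$ that are not otherwise used. This matches the strict inequality in the non-equality cases, and it gives strictness whenever $N\ge2$ or $E$ is unbounded but not all of $\R$. If the charging fails, the fallback is induction on $N$, merging the two closest intervals and tracking how $\var(u)$ changes.
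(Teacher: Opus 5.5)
The paper only quotes this theorem from \cite{BW} and contains no proof of it, so I judge your proposal on its own. It has a genuine gap exactly at the step you flag as the main obstacle.

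Claim~(i) is false already for two intervals. Take $E=(-3,-1)\cup(1,3)$ and $u=\MRc\mathbf 1_E$. The ball $(-3,3)$ gives $u(0)=\tfrac23$, while a direct computation gives $u(\pm1)=\tfrac12$. So $u$ is not decreasing-then-increasing on $[-1,1]$; in fact it has a strict local maximum at $0$. The same example shows that $u$ is not $1$ at the endpoints of a gap: for bounded $E$ it jumps down at every point of $\partial E$. This is harmless if you work with one-sided limits.

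Once (i) fails, the reduction in step~(ii) to $1-\min_G u<1$ says nothing. The proposed repair also cannot work as described. For bounded $E$ with $N$ components, your own bookkeeping (two jumps of $\mathbf 1_E$ per bounded gap, one per unbounded component of $\R\setminus E$) already uses all $2N=\var(f)$ boundary points. There are no boundary points ``not otherwise used'' to charge an oscillation to.

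What is actually available is the slack from the values of $u$ at its local minima. Suppose a bounded gap carries interior local maxima $\xi_1<\dots<\xi_m$ and local minima $\eta_0<\xi_1<\eta_1<\dots<\xi_m<\eta_m$. Then the variation of $u$ across the closed gap, including the jumps from the value $1$ on the two neighbouring components, equals $2+2\sum_i u(\xi_i)-2\sum_i u(\eta_i)$. So you need $\sum_i u(\xi_i)\le\sum_i u(\eta_i)$ on every gap (strictly somewhere, for the equality case), or a global substitute for this. The proposal contains no argument for any such comparison. The fallback of merging the two closest intervals comes with no control on how $\var(u)$ changes.

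A concrete starting point, for bounded $E$: let $\xi$ be an interior local maximum of $u$ in a gap $(b,a')$, with optimal ball $(\xi-R,\xi+R)$. Moving one endpoint of the ball at a time shows that $\xi-R=a_k<b$ is a left endpoint and $\xi+R=b_j>a'$ is a right endpoint of components of $E$. So the mechanism is not ``shadowing'' by a far-away interval, but a ball straddling the whole gap, as in the example above. For $\eta\in[b,a']$, compare with the ball centred at $\eta$ that contains $(a_k,b_j)$ and shares one endpoint with it. This gives $u(\eta)\geq u(\xi)\,\frac{R}{R+|\eta-\xi|}>\frac{u(\xi)}{2}$, which settles, with strict inequality, every gap containing at most one interior maximum. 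If a gap can contain several, these bounds share minima and do not by themselves yield $\sum_i u(\xi_i)\le\sum_i u(\eta_i)$, so an additional idea is needed there.

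Smaller points:
\begin{itemize}
\item $\partial E$ has at most $\var(f)$ points, not $\var(f)/2$.
\item In your single-interval formula the extra factor should be dropped: for $E=(a,b)$ one has $u(x)=\frac{b-a}{2(x-a)}$ for $x\geq b$, symmetrically for $x\le a$, and $u(a)=u(b)=\tfrac12$.
\item $\var(f)$ is a pointwise variation while $u$ only depends on the a.e.\ class of $\mathbf 1_E$, so the equality characterization needs a separate remark about null modifications of $E$.
\end{itemize}
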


For the uncentered maximal operator Aldaz and Pérez Lázaro proved the variation bound with constant 1, which is the smallest possible constant as can again be observed by considering radially decreasing functions.

\begin{theorem}[\cite{AP}]
\label{thm:bestc1p}
For any function $f:\mathbb{R}\to\mathbb{R}$ with bounded variation we have 
\[
\var(\MRu f)
\leq
\var(f)
.
\]
\end{theorem}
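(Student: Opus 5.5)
We prove Theorem~\ref{thm:bestc1p} (Aldaz--Pérez Lázaro) by showing that $\MRu f$ has no interior local maxima off the set where it agrees with a suitable representative of $f$. Since $\MRu f=\MRu|f|$ and $\var(|f|)\leq\var(f)$, we may assume $f\geq 0$. We may also replace $f$ by its upper semicontinuous representative $\bar f(x)=\max(f(x^-),f(x^+),f(x))$ after modifying on a null set. This changes neither the maximal function nor increases the relevant variation, since we work with the essential variation, i.e.\ the infimum over representatives.

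\textbf{Step 1: continuity of $\MRu f$ off the contact set.} The uncentered maximal function is lower semicontinuous and in fact continuous on the open set $U=\{\MRu f>\bar f\}$. For $x\in U$ the supremum is attained, or approached, by intervals of length bounded below. Those averages vary continuously with the endpoints, and an interval containing $x$ also contains nearby points.

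\textbf{Step 2: no strict local maxima in $U$.} Suppose $x\in U$ and $\MRu f(x)=\lambda$ is approached by averages over intervals $I_j\ni x$ with $|I_j|\geq\delta>0$. Every point $y\in I_j$ then satisfies $\MRu f(y)\geq A^{\R}_{I_j}f$. Passing to a limiting closed interval $I\ni x$ of positive length, $\MRu f\geq\lambda$ on $I$ up to its endpoints. Hence $\MRu f$ cannot have a strict local maximum in $U$. More precisely, on each connected component $(a,b)$ of $U$, the function $\MRu f$ has no interior local maximum that exceeds both sides. A continuous function with this property on an interval is monotone, or decreasing then increasing, i.e.\ ``V-shaped''. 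Its variation on $[a,b]$ is therefore at most $|\MRu f(a)-m|+|\MRu f(b)-m|$ with $m=\inf_{(a,b)}\MRu f$.

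\textbf{Step 3: comparison.} We take a partition $x_0<\dots<x_N$ and bound $\sum_i|\MRu f(x_{i})-\MRu f(x_{i-1})|$. We may refine the partition so that each point lies in the complement of $U$, where $\MRu f=\bar f$, or is a minimum point of a component of $U$. Consecutive points in the complement contribute $|\bar f(x_i)-\bar f(x_{i-1})|$. A component $(a,b)$ of $U$ contributes at most the V-shaped bound from Step 2. We then need $\MRu f(a)-m\leq \bar f(a)-\inf_{(a,b)}f$, and likewise at $b$. This holds because $\MRu f(a)=\bar f(a)$ at boundary points of $U$, and because $m\geq \operatorname{ess\,inf}_{(a,b)} f$, since every average over an interval inside $(a,b)$ is at least that essential infimum. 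Hence the variation of $\MRu f$ on $[a,b]$ is at most the variation of $f$ on $[a,b]$, which picks up a value near the infimum. Summing over components and the complement yields $\var(\MRu f)\leq\var(f)$. Unbounded components are handled the same way: $\MRu f$ is monotone there and tends to $\lim f$ or at least stays above $\inf f$.

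The main obstacle is Step 2 together with the boundary bookkeeping. Near points where the supremum is approached only by shrinking intervals, continuity and the identity $\MRu f=\bar f$ on $\partial U$ need care, and the choice of the semicontinuous representative is exactly what makes $\partial U\subset\{\MRu f=\bar f\}$ work. I would first establish $\MRu f(x)\leq \limsup$ of shrinking averages $\leq\bar f(x)$ at such points, and then derive the component-wise estimate.
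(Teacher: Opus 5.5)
The paper does not prove this statement; it is quoted from Aldaz and P\'erez L\'azaro. Your outline follows their strategy: the detachment set $U=\{\MRu f>\bar f\}$, a V-shape of $\MRu f$ on each component of $U$, and a comparison with $\bar f$. There is, however, a genuine gap in Step 2.

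What you actually establish is that every $x\in U$ lies in a nondegenerate closed interval on which $\MRu f\geq\MRu f(x)$. This rules out strict local maxima, but not flat-topped bumps. A continuous trapezoid (increasing, then constant, then decreasing) has this property at every point and is not V-shaped. So ``no strict local maximum $\Rightarrow$ monotone or decreasing-then-increasing'' is false. The property you really need is $\MRu f(t)\leq\max\{\MRu f(s),\MRu f(u)\}$ for all $s<t<u$ in a component, and it is never proved. Plateaus cannot be excluded by order considerations alone, because they genuinely occur in $U$: for $f=\chi_{(-\infty,0]}$ one has $\MRu f\equiv1$ on $U=(0,\infty)$. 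Without this quasiconvexity, the bound $|\MRu f(a)-m|+|\MRu f(b)-m|$ that Step 3 relies on is unavailable.

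The missing idea is to combine the exact value of the optimal average with $f\leq\MRu f$ a.e.
\begin{itemize}
\item Suppose $\MRu f(t)>\max\{\MRu f(s),\MRu f(u)\}$. Let $\lambda=\max_{[s,u]}\MRu f$ and let $y^*$ be the largest maximizer in $[s,u]$.
\item The limiting interval $I$ at $y^*$ is nondegenerate, since $y^*\in U$. It cannot contain $s$ or any point of $(y^*,u]$, because $\MRu f<\lambda$ there. Hence $I=[c,y^*]$ with $s<c<y^*$, $A^{\R}_I f=\lambda$, and $\MRu f=\lambda$ on $I$.
\item Since $f\leq\MRu f=\lambda$ a.e.\ on $I$ and the average is $\lambda$, we get $f=\lambda$ a.e.\ on $I$.
\item Therefore $\bar f\geq\lambda=\MRu f$ on $(c,y^*)$, contradicting $(c,y^*)\subset U$.
\end{itemize}

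Three smaller points also need repair.
\begin{itemize}
\item Take $\bar f(x)=\max\{f(x^-),f(x^+)\}$, without $f(x)$. With your definition, a removable spike gives $\bar f>\MRu f$ at a point outside $U$, so ``$\MRu f=\bar f$ off $U$'' fails. With the corrected choice, $\MRu f\geq\bar f$ everywhere.
\item Step 1 as written only proves lower semicontinuity. Upper semicontinuity follows by looking at nearly optimal intervals at points $y_k\to x$. Either their lengths are bounded below, and then a slight enlargement contains $x$ at negligible cost. Or they shrink, and then their averages have $\limsup\leq\bar f(x)$. This gives $\limsup_{y\to x}\MRu f(y)\leq\max\{\MRu f(x),\bar f(x)\}=\MRu f(x)$ at every $x$. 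That is exactly the continuity at $\partial U$ which your variation bound on the closed interval $[a,b]$ uses.
\item On an unbounded component, $\MRu f$ tends to $\max\{f(+\infty),f(-\infty)\}$, not to $\lim f$. Since $\MRu f$ is at least this value everywhere, the V-shape degenerates to monotonicity there, and the comparison goes through.
\end{itemize}
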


The best constant is also known in the endpoint $p=\infty$:

\begin{theorem}[\cite{aldaz2012optimal}]
For any function $f\in W^{1,\infty}(\mathbb{R})$ we have
\[{\|{(\MRu f)}'\|}_{L^\infty(\R)}\leq(\sqrt 2-1){\|f'\|}_{L^\infty(\R)}\]
and this bound is sharp.
\end{theorem}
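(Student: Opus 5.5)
The plan is to normalize $\|f'\|_{L^\infty(\R)}=1$, so that $f$ is $1$-Lipschitz, and to show that $\MRu f$ is $(\sqrt2-1)$-Lipschitz. Since $|f|$ is also $1$-Lipschitz and $\MRu f=\MRu|f|$, I may assume $f\geq0$. Fix $x<y$ with $\MRu f(y)>\MRu f(x)$; the symmetric case $x>y$ follows by reflection. The goal is
\[
\MRu f(y)-\MRu f(x)\leq(\sqrt2-1)(y-x).
\]

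Choose an interval $I=(a,b)\ni y$ with $\av\R If=:A$ arbitrarily close to $\MRu f(y)$. If $x\in I$, then $\MRu f(x)\geq A$ and there is nothing to prove, so assume $x\leq a$ and set $h=a-x\leq y-x$.

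The first step is to extract information from near-optimality of $I$ at its endpoints.
\begin{itemize}
\item If $a$ is an interior optimum, shrinking the interval cannot increase the average, which forces $f(a)\geq A$ up to an arbitrarily small error.
\item In general, a compactness and limiting argument handles the case where the supremum is attained only in the limit, for instance as $b\to\infty$.
\end{itemize}
The $1$-Lipschitz bound then gives $f(t)\geq A-(a-t)$ for $t\in[x,a]$. This yields two competitors for $\MRu f(x)$:
\begin{itemize}
\item the point value, giving $\MRu f(x)\geq f(x)\geq A-h$;
\item the averages over $J_s=(x,s)$ for $s\in[a,b]$, and in particular over $(x,b)$, giving
\[
\MRu f(x)\geq\frac{(b-a)A+hA-h^2/2}{b-a+h}.
\]
\end{itemize}

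The key step, and the main obstacle, is that the bound from $(x,b)$ alone is weak when $L=b-a$ is small compared with $h$. In that regime I would use the interval $(x,s)$ with $s$ chosen optimally, together with the structure of $f$ on $I$. Since $f\geq A$ near $a$ and $f$ is $1$-Lipschitz, $f$ cannot drop quickly just to the right of $a$. Averaging over $(x,s)$ therefore gains mass from $[a,s]$ that compensates for the loss $\int_x^a (a-t)\intd t=h^2/2$.

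I would parametrize the worst case by an extremal profile: $f$ linear with slope $1$ on $[x,a]$, and $f$ as small as allowed on $I$ subject to $\av\R If=A$ and the endpoint constraint. Then I would minimize over that profile the quantity
\[
\frac{1}{h}\Bigl(A-\sup_{s}\av\R{(x,s)}f\Bigr).
\]
This reduces to a one-variable calculus problem, whose optimization should produce the quadratic $t^2+2t-1=0$ and hence the constant $\sqrt2-1$. If the interplay between the competitors $f(x)$ and the averages over $(x,s)$ does not close, I would instead compare $\MRu f(x)$ with the average over $(x,s)$ where $s$ solves the first-order condition $f(s)=\av\R{(x,s)}f$, and estimate directly.

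For sharpness, I would test the tent $f(t)=\max(0,1-|t|)$. For $x$ slightly to the right of $1$, the maximal function is
\[
\MRu f(x)=\sup_{a\in[-1,0]}\frac{1-(1+a)^2/2}{x-a}.
\]
Differentiating the optimized expression in $x$ should give $|(\MRu f)'|=\sqrt2-1$ in the limit $x\to1^+$, while $\|f'\|_{L^\infty(\R)}=1$. This would show that the constant cannot be improved.
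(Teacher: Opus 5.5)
The paper does not prove this statement; it is quoted from Aldaz, Colzani and P\'erez L\'azaro, so your proposal has to stand on its own.

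\textbf{Sharpness.} This part is fine. For the tent $f(t)=\max\{0,1-|t|\}$, the optimal left endpoint at $x=1$ is $a=1-\sqrt2$ and $\MRu f(1)=2-\sqrt2$. The envelope formula then gives $(\MRu f)'(1^+)=-(\sqrt2-1)$. In fact $\MRu f(y)=1-(\sqrt2-1)|y|$ on $[-1,1]$.

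\textbf{The gap: the step $f(a)\geq A$.} The upper bound has a genuine gap here. Perturbing the left endpoint gives $f(a)=A$ only when the (limiting) optimal interval $[a,b]$ for $y$ has $a<y$, so that $a$ can move in both directions. If the optimal interval is $[y,b]$, you can only move $a$ to the left, and you get merely $f(y)\leq A$, typically with strict inequality. Near-optimal intervals $(a,b)\ni y$ then have $a$ close to $y$ and $f(a)$ bounded away from $A$, so no compactness argument rescues the claim.

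This is the essential case, not a degenerate one. If $a<y$, then $\MRu f\geq A=\MRu f(y)$ on $(a,y)$. So wherever $\MRu f$ is strictly increasing, the optimal interval must start at $y$. In the tent, every $y\in(-1,0)$ has optimal interval $[y,(\sqrt2-1)|y|]$, with $f(y)=1-|y|<1-(\sqrt2-1)|y|=A$. This is exactly where the slope $\sqrt2-1$ is attained. In this case both of your competitors, $f(x)\geq A-h$ and the lower bound for the average over $(x,b)$, are false.

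\textbf{The case you do treat is easy.} When $f(a)\geq A$, the difficulty you flag does not arise. You have $f(t)\geq A-|t-a|$ for all $t$, and the average over $(x,a+(\sqrt2-1)h)$ is already at least $A-(\sqrt2-1)h$. No information about $I$ is needed.

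\textbf{The missing idea for $a=y$: use the right endpoint.}
\begin{enumerate}
\item Two-sided perturbation of $b>y$ gives $f(b)=A$.
\item Set $L=b-y$. Combine $\int_y^b f=AL$ with $f(t)\leq\min\{f(y)+(t-y),A+(b-t)\}$ on $[y,b]$. This forces $c:=A-f(y)$ to satisfy $c^2+2cL-L^2\leq0$, i.e.\ $c\leq(\sqrt2-1)L$.
\item Set $h=y-x$ and average over $(x,b+(\sqrt2-1)h)$. Use $f(t)\geq f(y)-(y-t)$ on $[x,y]$ and $f(t)\geq A-(t-b)$ beyond $b$. This gives $\MRu f(x)\geq A-(\sqrt2-1)h$, again through $t^2+2t-1=0$.
\end{enumerate}
Two further cases remain. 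In the degenerate case $\MRu f(y)=f(y)$, the same tent-type average works. If the optimizing intervals have length tending to infinity, then $\MRu f(x)\geq\MRu f(y)$. With these, the argument closes.
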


For $1<p<\infty$ the optimal constant still remains open.
We may ask the same oscillation control question for the uncentered maximal operator in discrete setting.
For $k\in\N$ and a discrete function $f:\Z\rightarrow\R$ we define its $k$th derivative $f^{(k)}$ inductively by
\begin{align*}
f^{(0)}(n)&=f(n)
,&
f^{(k+1)}(n)&=f^{(k)}(n+1)-f^{(k)}(n)
.
\end{align*}
We also write $f'=f^{(1)}$, $f''=f^{(2)}$ etc.
Similar questions as above have been considered in this discrete setting. For $f:\mathbb{Z}\to\mathbb{R}$, sublinearity of $\MZu$ gives $(\MZu f)'\leq \MZu(f')$, and hence the discrete Hardy-Littlewood maximal theorem implies that for every $1<p\leq\infty$ there exists $C<\infty$ such that for all $f:\Z\rightarrow\R$ with $\|f'\|_{\ell^p(\Z)}<\infty$ we have
\[
{\|{(\MZu f)}'\|}_{\ell^p(\mathbb{Z})}\leq C{\|f'\|}_{\ell^p(\mathbb{Z})}.
\]
Bober, Carneiro, Hughes, and Pierce determined the smallest possible constant $C$ in the case $p=1$.
\begin{theorem}[\cite{BCHP}]
\label{thm: BCHP}
For all $f:\mathbb{Z}\to\mathbb{R}$ with $f'\in\ell^1(\mathbb{Z})$,
\[
\left\|{(\MZu f)}'\right\|_{\ell^1(\mathbb{Z})}\leq\|f'\|_{\ell^1(\mathbb{Z})}.
\]
Equality holds for $f=\chi_{\{0\}}$.
\end{theorem}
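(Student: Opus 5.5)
We will show $\|(\MZu f)'\|_{\ell^1(\Z)}\leq\|f'\|_{\ell^1(\Z)}$ by comparing the local extrema of $\MZu f$ with those of $|f|$. Since $\MZu f=\MZu|f|$ and $\||f|'\|_{\ell^1}\leq\|f'\|_{\ell^1}$, we may assume $f\geq 0$. Then $f'\in\ell^1$ forces $f$ to be bounded with limits $f(\pm\infty)$. Moreover $\MZu f$ is bounded by $\sup f$, and on each tail its limit is the supremum of $\lim f$ and the averages over long intervals, so $\MZu f$ also has limits at $\pm\infty$. We first reduce to finitely supported data: apply the bound to truncations $f\chi_{[-N,N]}$, or to $f$ modified to equal its limiting values outside $[-N,N]$, and pass to the limit. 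The lower semicontinuity of the variation and the pointwise convergence of the maximal functions make this step routine.

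For the core estimate, write the variation of $g=\MZu f$ as $\sum_j (g(M_j)-g(m_j))+(g(M_j)-g(m_{j+1}))$, where $m_1<M_1<m_2<M_2<\dots$ alternate between strict local minima and maxima of $g$ on plateaus, with the boundary values at $\pm\infty$ included. The key steps, in order, are:

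\begin{enumerate}
\item \emph{Maxima.} At a local maximum $M_j$ we claim $g(M_j)=f(M_j)$. If instead the supremum at $M_j$ were attained (or approached) by an interval $I\ni M_j$ with $|I|\geq 2$, then every point of $I$ would have $g\geq \av\Z I f=g(M_j)$. This contradicts strictness of the maximum, once the plateau endpoints are chosen appropriately. In the discrete setting the supremum over intervals containing $n$ is attained, or approached by long intervals whose average tends to a tail limit; the latter case can be handled separately.
\item \emph{Minima.} Since $g\geq f$ everywhere, we have $g(m_j)\geq f(m_j)$.
\item \emph{Telescoping.} Combining the two, $\var(g)\leq\sum_j \bigl(f(M_j)-f(m_j)\bigr)+\bigl(f(M_j)-f(m_{j+1})\bigr)$. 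The right-hand side is the variation of $f$ along the increasing sequence $m_1<M_1<m_2<\cdots$, which is at most $\|f'\|_{\ell^1}$.
\end{enumerate}

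The main obstacle is step (i): the plateau and attainment issues. The maximum of $g$ may occur on a plateau of several points, and we must select inside it a point where $f$ equals $g$. We plan to argue as follows. Take an interval $I$ attaining $g$ at a plateau point. Every point of $I$ then has $g\geq$ the plateau value, so $I$ lies inside the plateau. Since $\av\Z I f$ equals the plateau value while $f\leq g$ on $I$, it follows that $f=g$ on all of $I$, and so on the plateau points of $I$ in particular.

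Non-attainment occurs only when the optimal averages come from arbitrarily long intervals. In that case the plateau value equals a limit at $\pm\infty$, so the maximum is attained at infinity, and we treat it with the boundary terms.

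Finally, for sharpness take $f=\chi_{\{0\}}$. Then $\MZu f(n)=1/(|n|+1)$, so $\|(\MZu f)'\|_{\ell^1}=2=\|f'\|_{\ell^1}$.
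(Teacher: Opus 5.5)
The paper does not prove this theorem; it quotes it from \cite{BCHP}. Your strategy is the standard one: on strict local-maximum plateaus $\MZu f$ coincides with $f$, at minima it dominates $f$, and then you telescope. Your plateau argument is correct whenever the supremum is attained: an attaining interval $I$ at a point of such a plateau $P$ must lie in $P$, and $\av\Z If=g\ge f$ on $I$ forces $f=g$ on $I$.

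\textbf{The gap is at infinity}, when $L_\pm:=\lim_{n\to\pm\infty}f(n)$ are not both zero. Three things go wrong there.
\begin{enumerate}
\item The truncation $f\chi_{[-N,N]}$ does not work, because $\|(f\chi_{[-N,N]})'\|_{\ell^1(\Z)}\to\|f'\|_{\ell^1(\Z)}+L_-+L_+$.
\item Your alternative $f_N$ (equal to $f$ on $[-N,N]$ and to $L_\pm$ beyond) does satisfy $\|f_N'\|_{\ell^1(\Z)}\le\|f'\|_{\ell^1(\Z)}$ and $\|\MZu f_N-\MZu f\|_{\ell^\infty(\Z)}\le\|f_N-f\|_{\ell^\infty(\Z)}\to0$. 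But it is not finitely supported, so non-attainment and the boundary terms are still present.
\item Your plan for these, to regard such a plateau as a ``maximum attained at infinity'' handled by the boundary terms, fails as stated. An end acting as a maximum would need $\lim g\le f(\pm\infty)$ there. In fact $\lim_{n\to\pm\infty}\MZu f(n)=\max\{L_-,L_+\}$ at both ends, which exceeds $L_-$ when $L_-<L_+$. For example, $f=\chi_{[0,\infty)}$ gives $\MZu f\equiv1$ but $f(-\infty)=0$.
\end{enumerate}

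\textbf{The missing observation} is that $\MZu f(n)\ge\max\{L_-,L_+\}$ for every $n$: average over $[n,n+r]$ or $[n-r,n]$ and let $r\to\infty$. Together with $\MZu f(n)\to\max\{L_-,L_+\}$ as $n\to\pm\infty$, this has three consequences.
\begin{enumerate}
\item Both ends are global minima of $g$. Your alternating sequence therefore begins and ends there, and at the ends you only need $g(\pm\infty)\ge f(\pm\infty)$, which holds.
\item By the argument of Lemma~\ref{lem: good balls}, a point where the supremum is not attained is a global minimum of $g$. So it never lies on a strict local-maximum plateau unless $g$ is constant, and there is no ``maximum at infinity'' to handle.
\item For $n>N$, every interval containing $n+1$ either lies in $(N,\infty)$, where its average is $L_+\le\MZu f_N(n)$, or also contains $n$. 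Hence $\MZu f_N$ is non-increasing on $[N+1,\infty)$, and symmetrically non-decreasing on $(-\infty,-N-1]$. There are then only finitely many plateaus and your telescoping is a finite sum.
\end{enumerate}
Letting $N\to\infty$ and using lower semicontinuity of the variation finishes the proof. Your equality computation for $\chi_{\{0\}}$ is correct.
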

Gonzalez-Riquelme and the second author determined the smallest possible constant $C$ in the case $p=\infty$.
\begin{theorem}[\cite{GM2}]
\label{thm: GM2}
For all $f:\mathbb{Z}\to\mathbb{R}$ with $f'\in\ell^\infty(\mathbb{Z})$,
\[
\left\|{(\MZu f)}'\right\|_{\ell^\infty(\mathbb{Z})}\leq\frac{1}{2}\|f'\|_{\ell^\infty(\mathbb{Z})}.
\]
Equality holds for $f=\chi_{\{0\}}$.
\end{theorem}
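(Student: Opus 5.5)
Since $\MZu$ commutes with translations and dilations by positive constants, and $\MZu f=\MZu|f|$ while $\|\,|f|'\|_{\ell^\infty}\le\|f'\|_{\ell^\infty}$, I will assume $f\ge0$ and $\|f'\|_{\ell^\infty(\Z)}=1$. The goal is then to show
\[
|\MZu f(n+1)-\MZu f(n)|\le\tfrac12
\]
for every $n$. By the symmetry $n\mapsto -n$ it suffices to treat the case $\MZu f(n+1)>\MZu f(n)$.

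\emph{Step 1: reduction to intervals containing $n+1$ but not $n$.} Fix $\varepsilon>0$ and choose an interval $I\ni n+1$ with $\av\Z I f>\MZu f(n+1)-\varepsilon$. If $n\in I$, then $\MZu f(n)\ge\av\Z If$ and the difference is at most $\varepsilon$. Hence we may assume $I=[n+1,n+m]$ for some $m\ge1$, or an increasing union of such intervals, handled by a limit.

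\emph{Step 2: compare with a neighboring interval.} Let $J=[n,n+m]=I\cup\{n\}$, which contains $n$, so
\[
\MZu f(n+1)-\MZu f(n)\le\av\Z If-\av\Z Jf+\varepsilon=\frac{\av\Z If-f(n)}{m+1}+\varepsilon .
\]
The Lipschitz condition gives $f(n+j)\le f(n)+j$, so
\[
\av\Z If-f(n)\le\frac1m\sum_{j=1}^m j=\frac{m+1}2,
\]
and therefore the difference is at most $\tfrac12+\varepsilon$. Letting $\varepsilon\to0$ gives the bound. The case of an unbounded $I$ is excluded or handled by the same limit: if the averages over $[n+1,n+m]$ with $m$ large almost realize the supremum, then adding $n$ changes them by $O(1/m)\cdot(\text{average}-f(n))$, and the same computation applies for each finite $m$.

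\emph{Step 3: sharpness.} For $f=\chi_{\{0\}}$ we have $\|f'\|_{\ell^\infty}=1$. Here $\MZu f(0)=1$ and $\MZu f(1)=\tfrac12$, coming from the interval $\{0,1\}$. So $|(\MZu f)'(0)|=\tfrac12$ and equality holds.

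The main care point is Step 1, namely that the near-maximizing interval can be taken to exclude $n$ without loss of generality; this works because intervals containing both points are already counted in $\MZu f(n)$. One should also check that finiteness of $\MZu f$ is not an issue. If $\MZu f\equiv\infty$, the statement is vacuous or excluded; otherwise the averages are bounded, since $f$ being Lipschitz means $\MZu f$ is either finite everywhere or infinite everywhere.
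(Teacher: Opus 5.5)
Your proof is correct, and the paper gives no proof to compare it with: the theorem is quoted from \cite{GM2} and only used for characteristic functions (in the interpolation bound of the introduction and in the proof of Lemma~\ref{lem:m2fleq12}).

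The closest argument in the paper is Lemma~\ref{flat_bound}. There an interval $I\ni n$ is extended by one point to reach $n+1$, losing at most the factor $|I|/(|I|+1)\geq\frac12$. This gives $\MZu f(n)\geq\frac12\MZu f(n+1)$, and hence the constant $\frac12$ only when $0\leq f\leq 1$.

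Your Step~2 is the same one-point extension, but you compute the loss exactly,
$\av\Z If-\av\Z Jf=(\av\Z If-f(n))/(m+1)$. You then bound the numerator by the Lipschitz estimate $\av\Z If-f(n)\leq\frac{m+1}{2}$ instead of by $\av\Z If\leq 1$. The factor $m+1$ cancels exactly, so the argument works for every $f$ with $f'\in\ell^\infty(\Z)$. The result is an elementary, self-contained proof on $\Z$ that would make the citation unnecessary for this paper's purposes.

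A few minor points:
\begin{itemize}
\item The supremum defining $\MZu$ ranges over finite intervals only. Your hedging about increasing unions and unbounded $I$ can be dropped, since Step~1 directly yields $I=[n+1,n+m]$.
\item The dichotomy ``$\MZu f$ finite everywhere or infinite everywhere'' holds for every $f$ by Lemma~\ref{flat_bound}; it does not depend on the Lipschitz condition.
\item The infinite case genuinely occurs under the stated hypothesis, e.g.\ $f(n)=|n|$. So the theorem tacitly assumes $\MZu f<\infty$, which you correctly flag.
\item Normalizing to $\|f'\|_{\ell^\infty(\Z)}=1$ requires $f$ nonconstant. The constant case is trivial.
\end{itemize}
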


For characteristic functions $f:\Z\rightarrow\{0,1\}$ we have $\|f'\|_{\ell^\infty(\mathbb{Z})}\leq1$ and thus by interpolating between \zcref{thm: BCHP,thm: GM2} for any $1\leq p\leq\infty$ we obtain
\begin{equation}
\label{eq:charfpinterpolation}
{\|(\MZu f)'\|}_{\ell^p(\mathbb{Z})}\leq {\|(\MZu f)'\|}_{\ell^1(\mathbb{Z})}^\frac{1}{p}{\|(\MZu f)'\|}_{\ell^\infty(\mathbb{Z})}^{1-\frac{1}{p}}\leq {\|f'\|}_{\ell^1(\mathbb{Z})}^\frac{1}{p}2^{-\left(1-\frac{1}{p}\right)}{\|f'\|}_{\ell^\infty(\mathbb{Z})}^{1-\frac{1}{p}}\leq2^{\frac{1}{p}-1}\|f'\|_{\ell^p(\mathbb{Z})}
.
\end{equation}

\subsection{Second and higher derivative bounds for maximal operators}
For higher derivatives it has been folklore that for $1\leq p\leq\infty$ and $k\geq3$, as well as $1<p\leq\infty$ and $k=2$, the corresponding bound fails, i.e.\ there exist $f_1,f_2,\ldots:\mathbb{R}\rightarrow\mathbb{R}$ with
\begin{align*}
{\|f_n^{(k)}\|}_{L^p(\mathbb{R})}&\leq1,&
{\|{(\MRc f_n)}^{(k)}\|}_{L^p(\mathbb{R})}&\rightarrow\infty,&
{\|{(\MRu f_n)}^{(k)}\|}_{L^p(\mathbb{R})}&\rightarrow\infty
\end{align*}
because the maximal function of a smooth function may have corners. For the precise argument for the uncentered maximal operator $\MRu$, see \cite{paper2}. As for the endpoint case $(k,p)=(2,1)$, the fourth author found examples with
\begin{align}
\label{counterexamplec21}
{\|f_n''\|}_{L^1(\mathbb{R})}&\leq1
,&
{\|{(\MRu f_n)}''\|}_{L^1(\mathbb{R})}&\rightarrow\infty
\end{align}
in \cite{W}.
To be precise, they are proven for the uncentered maximal operator that does not take the absolute value in the integral.
In our forthcoming paper \cite{paper2}, we will show how they can be adapted to also apply to the classical maximal function defined in \zcref{def:uncentered} with the absolute value, and moreover extend the result to the discrete setting of functions on $\mathbb{Z}$.
Curiously, when considering only characteristic functions on $\mathbb{Z}$, the picture changes significantly. For the second derivative, Temur proved the following.
\begin{theorem}[\cite{Te2}]
\label{thm: Temur22}
Let $1\leq p\leq\infty$.
Then for all $S\subset\Z$, we have
\begin{equation}\label{eq:Temur22}
\|(\MZu\chi_S)''\|_{\ell^p(\mathbb{Z})}\leq 2^{1-\frac{1}{p}}3^\frac{1}{p}\|\chi_S''\|_{\ell^p(\mathbb{Z})}.
\end{equation}
\end{theorem}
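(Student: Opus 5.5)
The constant $2^{1-1/p}3^{1/p}$ is exactly what interpolating an $\ell^1$ bound with constant $3$ and an $\ell^\infty$ bound with constant $2$ gives. So the plan is to prove the two endpoint estimates
\begin{align*}
\|(\MZu\chi_S)''\|_{\ell^1(\Z)}&\leq 3\|\chi_S''\|_{\ell^1(\Z)},&
\|(\MZu\chi_S)''\|_{\ell^\infty(\Z)}&\leq 2\|\chi_S''\|_{\ell^\infty(\Z)},
\end{align*}
and then combine them by log-convexity, as in \zcref{eq:charfpinterpolation}:
\[
\|g\|_{\ell^p}\leq\|g\|_{\ell^1}^{1/p}\|g\|_{\ell^\infty}^{1-1/p},
\qquad g=(\MZu\chi_S)''.
\]
This yields
\[
\|g\|_{\ell^p}\leq 3^{1/p}\|\chi_S''\|_{\ell^1}^{1/p}\,2^{1-1/p}\|\chi_S''\|_{\ell^\infty}^{1-1/p}.
\]
To finish we need $\|\chi_S''\|_{\ell^1}^{1/p}\|\chi_S''\|_{\ell^\infty}^{1-1/p}\leq\|\chi_S''\|_{\ell^p}$. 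This holds for characteristic functions when $S\neq\emptyset,\Z$, because then $\chi_S''$ takes values in $\{-2,-1,0,1,2\}$ and $\|\chi_S''\|_{\ell^\infty}\in\{1,2\}$. If $\|\chi_S''\|_{\ell^\infty}=1$, all nonzero entries have modulus $1$, so $\|\chi_S''\|_{\ell^p}^p=\|\chi_S''\|_{\ell^1}$. If the sup is $2$, this clean identity fails and must be handled. I would do that by proving the endpoint bounds in the refined form $\|g\|_{\ell^\infty}\leq 2$ together with $\|g\|_{\ell^1}\leq 3\|\chi_S''\|_{\ell^1}$. When $\|\chi_S''\|_{\ell^\infty}=2$, I would instead use $\|\chi_S''\|_{\ell^\infty}\geq1$ and check the resulting inequality directly; if that fails, I would localize the interpolation region by region (see below).

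For the $\ell^\infty$ bound, fix $n$ and write $M=\MZu\chi_S$. Since $0\leq M\leq1$ and $M=1$ on $S$, we have $|M''(n)|\leq 2$ trivially. So the $\ell^\infty$ estimate is automatic whenever $\chi_S''\not\equiv 0$, and the trivial cases $S=\emptyset,\Z$ give zero.

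For the $\ell^1$ bound, decompose $\Z\setminus S$ into gaps: maximal intervals $G=[a,b]$ of zeros of $\chi_S$, each bounded by points of $S$ (or unbounded). On $S$, $M\equiv1$. On a finite gap $G$, $M$ is a maximum of averages over intervals reaching into $S$, and I expect to show it is first nonincreasing and then nondecreasing, i.e. unimodal with a single valley. This uses that an interval containing $n$ can always be extended to the nearest $S$-points, and that for $n$ moving away from an endpoint the best intervals are nested. Then the $\ell^1$ norm of $M''$ over $G$ and its two boundary points is controlled by the total turning of the slope. Because $M$ is convex-like near each end, this turning is at most
\[
|M'(a-1)|+|M'(b)|+(\text{slope changes inside}).
\]
The key claim is that the contribution of each gap is at most $3$ times the contribution of $\chi_S''$ near that gap. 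Each finite gap produces $\chi_S''$ entries $-1,+1,+1,-1$ pattern-wise, with total mass between $2$ and $4$ depending on whether the neighbouring $S$-blocks or gaps are singletons. One then sums over gaps, taking care not to double count the shared boundary points.

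The main obstacle is this per-gap estimate. Bounding $\sum|M''|$ on a gap requires showing that $M'$ is monotone in most of the gap, so that the sum of $|M''|$ telescopes to $O(1)$ rather than growing with the gap length. My first attempt would be to show that, on the left half of the gap, $M(n)=\max_{I}$ over intervals with left endpoint at an $S$-point, giving a function of the form $\max_j c_j/(n-d_j)$. Such a function is convex in $n$ since each piece is convex, so $M''\geq0$ there. Likewise on the right half, and only near the valley can $M''$ be negative, with total mass bounded by the slope jump. Convexity on each side then gives $\sum|M''|\leq |M'(\text{end})|+|M'(\text{start})|+2|\text{slope jump at valley}|$, each term at most $1/2$ by \zcref{thm: GM2}. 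The remaining, more delicate step is to bound the boundary terms near singleton points of $S$ and match the constant $3$ exactly.
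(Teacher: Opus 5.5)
Your interpolation scaffolding is fine, and the case $\|\chi_S''\|_{\ell^\infty}=2$ needs no special treatment. From $\|g\|_{\ell^p}^p\le\|g\|_{\ell^\infty}^{p-1}\|g\|_{\ell^1}$, the absolute bound $\|g\|_{\ell^\infty}\le2$, the estimate $\|g\|_{\ell^1}\le3\|\chi_S''\|_{\ell^1}$, and $\|\chi_S''\|_{\ell^1}\le\|\chi_S''\|_{\ell^p}^p$ (true because $\chi_S''$ is integer valued), you get the claim directly.

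The genuine gap is the $\ell^1$ estimate. It carries the entire content of the statement, and you do not prove it. You obtain $M''\ge0$ only on the parts of a gap where $M$ is given by one-sided intervals. You then assert that near the valley $M''$ may be negative with total mass ``bounded by the slope jump''. Nothing in the argument bounds how many points lie in that transition region, or how large the negative part of $M''$ is there, so the telescoping estimate on a gap does not follow. The unimodality claim via ``nested best intervals'' is also unproved, and you leave the boundary terms and the constant open yourself.

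The missing idea is that $M=\MZu\chi_S$ is convex on every closed gap $[a-1,b+1]$, i.e.\ $M''(n)\ge0$ whenever $n+1\notin S$ (Lemma~\ref{lem: convexity}). So there is no valley issue at all. Your sup-of-convex-functions idea proves this once you add a third family of intervals. For $n\in[a,b]$, an interval containing $n$ does one of three things:
\begin{itemize}
\item it has its right endpoint in $[a,b]$, so its average is at most $\phi(n)=\sup_{l\le a-1}\cm{[l,a-1]\cap S}/(n-l+1)$;
\item it has its left endpoint in $[a,b]$, giving the symmetric bound $\psi(n)$;
\item it contains $[a-1,b+1]$, in which case its average is independent of $n$; call the supremum of these averages $c$.
\end{itemize}
Hence $M=\max\{\phi,\psi,c\}$ on $[a-1,b+1]$, a maximum of convex functions.

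Then split $\Z$ according to whether $n+1\in S$, which also removes your double-counting problem. On a bounded gap, convexity and Lemma~\ref{flat_bound} give $\sum_{n=a-1}^{b-1}|M''(n)|=M'(b)-M'(a-1)\le1$, against $\sum_{n=a-1}^{b-1}|\chi_S''(n)|=2$; unbounded gaps are analogous. If $n+1\in S$, then $M(n+1)=1$ and $M\ge\frac12$ next to $S$, so $|M''(n)|\le\frac12|\chi_S''(n)|$. This gives $\|M''\|_{\ell^1}\le\frac12\|\chi_S''\|_{\ell^1}$, much better than $3$.

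The paper itself does not reprove Temur's bound. It proves the sharper $C_{2,p}\le\frac12$ (Theorem~\ref{thm:c2p}) region by region directly in $\ell^p$, using this convexity together with Lemmas~\ref{lem:m2fleqf22} and~\ref{lem:m2fleq12}. The statement then follows since $\frac12\le2^{1-1/p}3^{1/p}$.
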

Temur and \"{O}zcan proved the following bounds for higher derivatives:
\begin{theorem}[\cite{TO}]
\label{thm: Temur25}
Let $1\leq p\leq\infty$ and $k\geq 1$.
Then there exists $C\geq0$ such that for all $S\subset\Z$, we have
\[
\left\|{(\MZu\chi_S)}^{(k)}\right\|_{\ell^p(\mathbb{Z})}\leq C\|\chi_S^{(k)}\|_{\ell^p(\mathbb{Z})}.
\]
\end{theorem}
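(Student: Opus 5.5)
Since the statement allows $C$ to depend on $k$ and $p$, I would not try to analyse the maximal function in detail. The plan is to compare both sides with the number of jumps of $\chi_S$, namely $N:=\#\{m\in\Z: \chi_S(m)\neq\chi_S(m+1)\}=\|\chi_S'\|_{\ell^1(\Z)}$. Write $f=\chi_S$. If $N=0$, then $f$ is constant, and $\MZu f$ is constant too, so both sides vanish. If $N=\infty$, I expect the right-hand side to be infinite by the lower bound below, and then there is nothing to prove. So the main case is $0<N<\infty$.

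\emph{Upper bound for the left-hand side.} Since $0\le \MZu f\le 1$, every difference satisfies $\|(\MZu f)^{(k)}\|_{\ell^\infty}\le 2^k$. Every $k$th difference of a function $g$ is a signed sum of $2^{k-1}$ translates of $g'$. Combined with \zcref{thm: BCHP}, this gives
\[
\|(\MZu f)^{(k)}\|_{\ell^1(\Z)}\le 2^{k-1}\|(\MZu f)'\|_{\ell^1(\Z)}\le 2^{k-1}\|f'\|_{\ell^1(\Z)}=2^{k-1}N .
\]
Interpolating as in \zcref{eq:charfpinterpolation}, $\|h\|_{\ell^p}\le\|h\|_{\ell^1}^{1/p}\|h\|_{\ell^\infty}^{1-1/p}$, yields $\|(\MZu f)^{(k)}\|_{\ell^p}\le 2^{k}N^{1/p}$ for $1\le p\le\infty$. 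For $k=1$ one could instead use \zcref{thm: GM2} directly, but this is not needed.

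\emph{Lower bound for the right-hand side.} This is the step I expect to be the real content: I want $\|f^{(k)}\|_{\ell^p}^p\ge c_k N$. The values $f^{(k)}(n)$ are integers, so $|f^{(k)}(n)|\ge1$ wherever $f^{(k)}(n)\neq0$. Hence it suffices to show two things. First, every jump $m$ lies within distance $O(k)$ of some $n$ with $f^{(k)}(n)\ne0$. Second, each such $n$ is thereby assigned at most $O(k)$ jumps, which is automatic since only $O(k)$ integers lie within distance $O(k)$ of $n$.

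For the first claim I would argue as follows. Suppose $f^{(k)}$ vanishes on $n,\dots,n+L-1$. Then $f$ agrees on the $L+k$ points $n,\dots,n+L+k-1$ with a polynomial $P$ of degree $<k$. If $P$ is nonconstant, each of the equations $P=0$ and $P=1$ has at most $k-1$ solutions. So a window of $2k-1$ consecutive points on which $f^{(k)}$ vanishes would force $f$ to be constant on a window of length $3k-1$, in particular on both sides of its middle point. Now take a jump $m$ and consider the window of $f^{(k)}$-indices $[m-2k+1,\,m+1]$. The corresponding values of $f$ include $f(m)$ and $f(m+1)$ and lie well inside the constancy window. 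Thus $f^{(k)}$ cannot vanish identically there, which gives some $n$ with $|n-m|\le 2k$ and $f^{(k)}(n)\ne0$. The same reasoning shows that infinitely many jumps give infinitely many nonzero values of $f^{(k)}$, so for $p<\infty$ the right-hand side is infinite in that case.

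\emph{Conclusion.} The two steps combine to give $\|f^{(k)}\|_{\ell^p}\ge c_k^{1/p}N^{1/p}$ for $p<\infty$, and $\|f^{(k)}\|_{\ell^\infty}\ge1$ whenever $N>0$. Therefore
\[
\|(\MZu f)^{(k)}\|_{\ell^p}\le 2^kN^{1/p}\le 2^k c_k^{-1/p}\|f^{(k)}\|_{\ell^p},
\]
and $C=2^k c_k^{-1/p}$ works. The main care point is the polynomial rigidity step: the index bookkeeping has to ensure that the window of $f$-values really straddles the jump, so that $f(m)\neq f(m+1)$ contradicts the forced constancy.
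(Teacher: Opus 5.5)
Your proposal is correct and follows essentially the same route as the argument the paper retraces from \cite{TO}. You bound $\|(\MZu\chi_S)^{(k)}\|_{\ell^p(\Z)}$ by a multiple of $\|\chi_S'\|_{\ell^p(\Z)}$ using the first-derivative result of \cite{BCHP}, interpolating at the level of $k$th differences against the trivial bound $2^k$, so \cite{GM2} is not needed. You then compare $\|\chi_S'\|_{\ell^p(\Z)}$ with $\|\chi_S^{(k)}\|_{\ell^p(\Z)}$ by integrality and polynomial rigidity; the paper instead quotes this comparison from \cite{TO} with constant $(2k+1)^{1/p}$.

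The only slip is your opening claim that $N=\infty$ makes the right-hand side infinite. This fails for $p=\infty$, but that case is already covered by your closing remark: $\|\chi_S^{(k)}\|_{\ell^\infty(\Z)}\geq1$ whenever $N>0$, while $\|(\MZu\chi_S)^{(k)}\|_{\ell^\infty(\Z)}\leq 2^k$ always.
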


We now retrace the proof of \zcref{thm: Temur25} in \cite{TO} in order to recover an explicit value for $C$.
They show that for all $S\subseteq\mathbb{Z}$, 
\[
{\|\chi_S'\|}_{\ell^p(\mathbb{Z})}\leq\min\left\{{(2k+1)}^{\frac{1}{p}},2^{\frac{k}{p}}\right\}{\left\|\chi_S^{(k)}\right\|}_{\ell^p(\mathbb{Z})}.
\]
Thus, by \zcref{eq:charfpinterpolation} and the triangle inequality,
\begin{equation}\label{eq:explicit_bound_from_TO25}
\begin{split}
\left\|{(\MZu\chi_S)}^{(k)}\right\|_{\ell^p(\mathbb{Z})}&\leq 2^{k-1}{\|(\MZu \chi_S)'\|}_{\ell^p(\mathbb{Z})}\leq2^{k-1+(\frac{1}{p}-1)}{\|\chi_S'\|}_{\ell^p(\mathbb{Z})}\\
&\leq\min\left\{2^{k+\frac{1}{p}-2}{(2k+1)}^{\frac{1}{p}},2^{k+\frac{k+1}{p}-2}\right\}{\left\|\chi_S^{(k)}\right\|}_{\ell^p(\mathbb{Z})}.
\end{split}
\end{equation}

\subsection{Table of known results}\label{sec:tables}

\begin{definition}
For $k\in\N_0$ and $1\leq p\leq\infty$ denote by $C_{k,p}\geq0$ the smallest number such that for all subset $S\subseteq\mathbb{Z}$ with \({\left\|\chi_S^{(k)}\right\|}_{\ell^p(\Z)}<\infty\) we have
\begin{equation}\label{eq: goal}
{\left\|{(\MZu\chi_S)}^{(k)}\right\|}_{\ell^p(\mathbb{Z})}\leq C_{k,p}{\left\|\chi_S^{(k)}\right\|}_{\ell^p(\mathbb{\mathbb{Z}})}
,
\end{equation}
and similarly define $C_{0,p,\infty}$ for the weak bound
\[
\sup_{\lambda>0}
\lambda^{\frac1p}
\cm{\{n\in\Z:\MZu\chi_S(n)>\lambda\}}
\leq
C_{0,p,\infty}
{\left\|\chi_S\right\|}_{\ell^p(\mathbb{\mathbb{Z}})}
.
\]
\end{definition}

\begin{table}[h]
\centering
\begin{threeparttable}
\begin{tabular}{|c|cccc|}
\hline
$p$\textbackslash$k$ & 0 & 1 & 2 & $\geq3$
\\
\hline
1 & $\infty$ & 1  & $\leq3$  & $\leq2^{k-1}(2k+1)$
\\
$1<p<\infty$ & $<\infty$ & $\leq2^{\frac1{p}-1}$  & $\leq\min\left\{2^{1-\frac1p}3^{\frac1p},2^\frac{3}{p}\right\}$  & $\leq2^{k+\frac{1}{p}-2}{(2k+1)}^{\frac{1}{p}}$ 
\\
$\infty$ & 1 & $\frac12$  & $\leq1$  & $\leq2^{k-2}$  \\\hline
\end{tabular}
\end{threeparttable}
\caption{Known values $C_{k,p}$ for the uncentered maximal operator in the discrete characteristic function setting.}
\label{table:ckp}
\end{table}

In \zcref{table:ckp} the entries for $k=1$ equal \zcref{eq:charfpinterpolation}, the entries for $k=2$ equal the minimum of \zcref{eq:Temur22,eq:explicit_bound_from_TO25} and the entries for $k\geq3$ equal \zcref{eq:explicit_bound_from_TO25}.



\section{Main Results}
As observed in \zcref{table:ckp}, sharp derivative bounds, particularly for discrete maximal operators, remain largely unknown in the existing literature.
In this manuscript, we improve the bounds for $C_{k,p}$, i.e.\ for characteristic functions, and update \zcref{table:ckp} to \zcref{table:ckpnew}.


We call $(\chi_{S_m})_{m\in\N}$ an extremizing sequence for $C_{k,p}$ if
\[C_{k,p}=\lim_{m\to\infty}\frac{{\|(\MZu\chi_{S_m})^{(k)}\|}_{\ell^p(\Z)}}{{\|\chi_{S_m}^{(k)}\|}_{\ell^p(\Z)}}.\]
\begin{theorem}\label{thm:c0p}
For all $1\leq p\leq\infty$ we have
\begin{align*}
C_{0,p}
&=
\left(\frac{p+1}{p-1}\right)^{\frac1p}
,&
C_{0,p,\infty}
&=
2^{1/p},
\end{align*}
with the usual limiting interpretations at $p=1,\infty$. The function $\chi_{\{0\}}$ is an extremizer for $C_{0,p,\infty}$, and with $S_{m}:=\{1,2,\dots,m\}$, the sequence $(\chi_{S_m})_{m\in\N}$ is extremizing for $C_{0,p}$.
\end{theorem}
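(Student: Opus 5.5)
We need to prove $C_{0,p}=((p+1)/(p-1))^{1/p}$ and $C_{0,p,\infty}=2^{1/p}$ for $f=\chi_S$.

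The plan has three parts: a weak-type bound, a strong-type upper bound via a pointwise description of $\MZu\chi_S$, and the two extremizers.

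\textbf{Weak bound.} For $\lambda\geq1$ the level set $\{\MZu\chi_S>\lambda\}$ is empty. For $0<\lambda<1$ we use the discrete rising sun (Riesz) lemma. The set $E_\lambda=\{\MZu\chi_S>\lambda\}$ is a union of intervals $I$ with $\cm{I\cap S}>\lambda\cm I$. A Vitali-type argument in one dimension covers $E_\lambda$ by two disjoint subfamilies, giving $\cm{E_\lambda}\leq 2\lambda^{-1}\cm S$; the discrete version of the Riesz sunrise lemma should even give $\cm{E_\lambda}\leq\lambda^{-1}\cm S$ up to boundary effects. It remains to balance this against $\lambda^{1/p}$. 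The supremum over $\lambda<1$ of $\lambda\,\cm{E_\lambda}^{p}$, relative to $\cm S$, gives the constant.

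For $S=\{0\}$ we have $\MZu\chi_{\{0\}}(n)=1/(|n|+1)$. The level set at $\lambda$ just below $1/j$ has size $2j-1$. Since the stated constant $2^{1/p}$ must come from $\lambda^{1/p}\cm{E_\lambda}$, we optimize $\lambda\to 1/2^-$, where $\cm{E_\lambda}=3$. For the upper bound in general we use $\cm{E_\lambda}\leq 2\cm S/\lambda$ and $\cm{E_\lambda}\leq$ something like $\cm S/\lambda+1$. We need to check which normalization is meant; the weak quantity should be read as $\lambda\cm{E_\lambda}^{1/p}\leq C\cm S^{1/p}$. With that reading, for $\chi_{\{0\}}$ and $\lambda\to1^-$ we get $\cm{E_\lambda}=1$ and ratio $1$; for $\lambda\to 1/2^-$ we get $\tfrac12\cdot 3^{1/p}$. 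The key general estimate is then $\lambda^p\cm{E_\lambda}\leq 2\cm S$, equivalently $\cm{E_\lambda}\leq 2\lambda^{-p}\cm S$. Since $\lambda^{-p}\geq\lambda^{-1}$, this follows from the sharp discrete rising sun bound $\cm{E_\lambda}\leq 2\lambda^{-1}\cm S$ for $\lambda<1$, refined near $\lambda=1$ using $E_\lambda\subseteq S$ whenever $\lambda\geq1/2$, in which case $\cm{E_\lambda}\leq 2\cm S$ directly. I will sort out the precise exponent convention and verify that equality occurs for $\chi_{\{0\}}$.

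\textbf{Strong bound.} Decompose $S$ into maximal blocks of consecutive integers. For $n\notin S$, $\MZu\chi_S(n)=\max\cm{I\cap S}/\cm I$, and by monotonicity in each direction the best $I$ ends at block endpoints. I would prove a pointwise domination: $\MZu\chi_S\leq\max_B \MZu\chi_B$ over blocks $B$ is false in general. Instead, use a layer-cake argument:
\[
\|\MZu\chi_S\|_p^p=\int_0^1 p\lambda^{p-1}\cm{E_\lambda}\intd\lambda .
\]
With the sharp rising sun inequality $\cm{E_\lambda\setminus S}\leq(\lambda^{-1}-1)\cm S$, summed over $n$, this becomes
\[
\|\MZu\chi_S\|_p^p\leq\cm S\Bigl(1+\int_0^1p\lambda^{p-1}(\lambda^{-1}-1)\cdot 2\intd\lambda\Bigr),
\]
since both sides contribute. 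Computing,
\[
1+2p\Bigl(\tfrac1{p-1}-\tfrac1p\Bigr)=1+\tfrac{2}{p-1}=\tfrac{p+1}{p-1},
\]
which matches the claimed constant. So the main step, and the main obstacle, is the sharp one-sided rising sun estimate: for each $\lambda$, the points to the right (resp.\ left) of $S$ in $E_\lambda$ that are reached by intervals extending leftward number at most $(\lambda^{-1}-1)\cm S$. This is the discrete analog of the classical Riesz lemma, via the partial-sum function $F(n)=\sum_{i\leq n}(\chi_S(i)-\lambda)$. The difficulty is handling the two-sided, uncentered intervals and the discreteness (floors). I would split $E_\lambda\setminus S$ into points whose witnessing interval has more mass to the left versus to the right, and apply a one-sided rising sun argument to each class.

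\textbf{Sharpness.} For $S_m=\{1,\dots,m\}$ we have $\MZu\chi_{S_m}(m+j)=m/(m+j)$ and symmetrically on the left. Therefore
\[
\|\MZu\chi_{S_m}\|_p^p=m+2\sum_{j\geq1}\Bigl(\tfrac m{m+j}\Bigr)^p\approx m\Bigl(1+\tfrac{2}{p-1}\Bigr),
\]
and the ratio with $\|\chi_{S_m}\|_p^p=m$ tends to $\tfrac{p+1}{p-1}$. The endpoint cases $p=1$ and $p=\infty$ follow by the limiting interpretations.
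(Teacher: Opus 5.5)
\paragraph{The key level-set estimate is not proved.} Your reduction of the strong bound to
\[
|E_\lambda|\le\Bigl(\frac2\lambda-1\Bigr)|S|,\qquad E_\lambda=\{\MZu\chi_S>\lambda\},\quad 0<\lambda\le1,
\]
via the layer-cake formula is correct, and so is your sharpness computation for $S_m$. But this estimate is the entire content of the upper bound, and you leave it unproved. Your covering argument for the weak part only gives $|E_\lambda|\le 2|S|/\lambda$, which in the layer-cake integral produces $2p/(p-1)$ instead of $(p+1)/(p-1)$.

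The route you propose instead, a one-sided rising sun lemma on $\Z$ applied to ``left'' and ``right'' points, fails as stated. Unlike on $\R$, the average over $[a,b]\ni n$ is not bounded by the larger of the averages over $[a,n]$ and $[n,b]$, because $n$ is counted in both pieces. For $S=\{-1,1\}$ we have $\MZu\chi_S(0)=\frac23$ via $[-1,1]$, while every interval $[a,0]$ or $[0,b]$ has average at most $\frac12$. So for $\frac12\le\lambda<\frac23$ the point $0$ lies in $E_\lambda$ but in neither one-sided superlevel set.

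The missing step is a one-line refinement of the covering you already have, and this is the paper's argument. Let $U^0,U^1$ be the unions of the two disjoint subfamilies, whose intervals all satisfy $|I\cap S|\ge\lambda|I|$. Then
\[
\begin{aligned}
|U^0\cup U^1|&=|U^0|+|U^1|-|U^0\cap U^1|
\le\frac{|S\cap U^0|+|S\cap U^1|}{\lambda}-|S\cap U^0\cap U^1|\\
&=\frac{|S\cap(U^0\cup U^1)|}{\lambda}+\Bigl(\frac1\lambda-1\Bigr)|S\cap U^0\cap U^1|,
\end{aligned}
\]
and the right-hand side is at most $(\frac2\lambda-1)|S|$ because $\lambda\le1$. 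Alternatively, your rising-sun idea does work after passing to $\tilde f=\sum_{k\in S}\chi_{[k-1/2,k+1/2)}$ on $\R$. Every $n\in E_\lambda$ has its whole cell inside $\{\MRu\tilde f>\lambda\}$, and on $\R$ the uncentered maximal function is the maximum of the two one-sided ones.

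\paragraph{The weak-type part is unresolved.} The reading you settle on, $\lambda^p|E_\lambda|\le C^p|S|$, makes the claim false for $p>1$. The level-set estimate gives $\lambda^p|E_\lambda|\le\lambda^{p-1}(2-\lambda)|S|$, whose supremum over $0<\lambda\le1$ is strictly below $2$ (and equals $1$ for $p\ge2$). For $\chi_{\{0\}}$ you only get $\sup_j(2j-1)j^{-p}<2$, which is why you could not verify the extremizer.

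The displayed definition taken literally gives $C_{0,p,\infty}=\infty$ for $p>1$ (take $S=\{0\}$ and $\lambda\uparrow 1/j$), so some reinterpretation is needed. The one consistent with the stated value and with the paper's proof is $\lambda|E_\lambda|\le C_{0,p,\infty}^p\|\chi_S\|_{\ell^p(\Z)}^p=C_{0,p,\infty}^p|S|$, i.e.\ the weak $(1,1)$ bound with constant $2$. It follows from $\lambda|E_\lambda|\le(2-\lambda)|S|$. It is approached by $S=\{0\}$ with $\lambda\uparrow1/j$: there $|E_\lambda|=2j-1$, so $\lambda|E_\lambda|\to(2j-1)/j$, which tends to $2$ as $j\to\infty$. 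The extremal regime is $\lambda\to0$, not $\lambda\to\frac12$.

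Two side claims in that paragraph are also false. First, $E_\lambda\subseteq S$ for $\lambda\ge\frac12$ fails for $S=\{0,1\}$, where $\MZu\chi_S(2)=\frac23$. Second, no bound of the form $|E_\lambda|\le\lambda^{-1}|S|+O(1)$ can hold, since $S_m$ gives $|E_\lambda|\sim(\frac2\lambda-1)m$.
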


\begin{remark}
For every function $f:\mathbb{Z}\to\mathbb{R}$, we have
\(
\MZc f(n)\leq \MZu f(n)
\)
for all $n\in\mathbb{Z}$. Thus, for every $S\subseteq\mathbb{Z}$,
\[
{\|\MZc\chi_S\|}_{\ell^p(\mathbb{Z})}
\leq {\|\MZu\chi_S\|}_{\ell^p(\mathbb{Z})}
\leq \left(\frac{p+1}{p-1}\right)^{\frac{1}{p}}
{\|\chi_S\|}_{\ell^p(\mathbb{Z})}.
\]
It remains unclear whether this bound for the centered maximal operator is sharp.
\end{remark}

\begin{theorem}\label{thm:c1p}
For all $1\leq p\leq \infty$ we have
\[C_{1,p}={\left(\sum_{n=1}^\infty {\left(\frac{1}{n}-\frac{1} {n+1}\right)}^p\right)}^{\frac{1}{p}}\]
with the usual limiting interpretations at $p=\infty$. The function $\chi_{\{0\}}$ is an extremizer.
\end{theorem}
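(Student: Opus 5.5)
Write $M:=\MZu$ and $D_p:=\big(\sum_{n\geq1}(\frac1n-\frac1{n+1})^p\big)^{1/p}$. The plan has two parts: show that $\chi_{\{0\}}$ attains the ratio $D_p$, and then show that no characteristic function does better by reducing to pieces of that shape.

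\textbf{Lower bound.} This is a direct computation. For $f=\chi_{\{0\}}$ we have $Mf(n)=\frac1{|n|+1}$, since the best interval containing both $n$ and $0$ is the one with endpoints $0$ and $n$. Hence
\[
|(Mf)'(n)|=\frac1{n+1}-\frac1{n+2}\quad\text{for } n\geq0,
\]
and the values for $n<0$ are the mirror images of these. So $\|(Mf)'\|_p^p=2\sum_{n\geq1}(\frac1n-\frac1{n+1})^p$. On the other side, $f'$ has exactly two nonzero entries, both of size $1$, so $\|f'\|_p^p=2$. The ratio is therefore exactly $D_p$, and the case $p=\infty$ gives $\frac12$, in agreement with \zcref{thm: GM2}.

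\textbf{Upper bound: decomposition.} Take $S$ with $\|\chi_S'\|_p<\infty$. For $p<\infty$ this means $S$ has finitely many boundary points; the case $p=\infty$ is already covered by \zcref{thm: GM2}. Write $S$, or its complement, as a finite union of maximal intervals, so that $\|\chi_S'\|_p^p$ equals the number of jumps. The key structural claim is that between consecutive jumps the function $Mf$ is unimodal on each gap: it is equal to $1$ on $S$, and on a gap of zeros it decreases away from each neighbouring block and then increases again. It therefore suffices to charge $|(Mf)'|^p$ on each monotone stretch to the jump at which the stretch starts, and to show the charge per jump is at most $D_p^p$.

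\textbf{Upper bound: the per-jump estimate.} Consider the right jump at the end $b$ of a block, with $f(b)=1$ and $f(b+1)=0$. On the stretch where $Mf$ decreases to the right of $b$, the values are $a_k=Mf(b+k)$ for $k=0,\dots,L$, with $a_0=1$. Two facts about this sequence are needed.
\begin{itemize}
\item Each value satisfies $a_k=\sup_I\frac{\#(S\cap I)}{|I|}$. Any interval containing $b+k$ and points of $S$ must cover the $k$ zeros between $b$ and $b+k$, which suggests the bound $a_k\geq\frac1{k+1}$ together with a concave or convex comparison showing that the successive differences are dominated by the profile $\frac1k-\frac1{k+1}$.
\item The sharp tool should be majorization. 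The differences $d_k=a_{k-1}-a_k\geq0$ sum to at most $1-a_L$, so $\sum_k d_k\leq 1$. I would aim to prove that the sequence $(d_k)$ is majorized by $(\frac1k-\frac1{k+1})_{k\geq1}$, meaning that partial sums of the decreasing rearrangement satisfy $\sum_{k\leq m}d_k^*\leq 1-\frac1{m+1}$. Karamata's inequality for the convex function $t\mapsto t^p$ then gives $\sum_k d_k^p\leq D_p^p$.
\end{itemize}

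\textbf{Main obstacle.} The hard step is the majorization $\sum_{k\leq m}d_k^*\leq\frac{m}{m+1}$. In words, no $m$ steps of the decrease can drop by more than $\frac m{m+1}$ in total. The plan is to use the fact that a step $d_k$ taken at distance $k$ from the block requires density comparable to $1/k$ there. Concretely, I would show $a_{k-1}-a_k\leq a_{k-1}\cdot\frac{1}{k+1}$-type estimates, by comparing the optimal interval for $b+k-1$ with the same interval extended by one point. Iterating such estimates should give $a_m\geq\frac1{m+1}$ uniformly, with the largest drops occurring first.

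The same estimate must also hold when a far-away block raises $Mf$ on the increasing side. In that case the decreasing stretch is cut short, which only helps, because it truncates the sequence of differences. If the majorization fails for arbitrary positions of the drops, the fallback is a merging argument: show that moving the other blocks of $S$ away to infinity does not decrease $\|(Mf)'\|_p$ relative to the number of jumps. This would reduce the problem to single intervals $\{1,\dots,m\}$, where $Mf$ is explicit and can be compared with the $m=1$ case directly.
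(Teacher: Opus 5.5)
Your architecture is the paper's: $\chi_{\{0\}}$ gives the lower bound, each gap is split into a decreasing stretch charged to the jump on its left and an increasing stretch charged to the jump on its right, and Karamata is applied against the profile $\frac1k-\frac1{k+1}$. The gap is that both facts this rests on are asserted rather than proved, and the route you sketch for the second does not close.

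\emph{Unimodality.} Unimodality of $M\chi_S$ on a gap is stated as a ``key structural claim'' with no argument. Without it, a gap may carry several monotone stretches, and the charging to jumps fails.

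\emph{Majorization.} Iterating $d_k\le a_{k-1}/(k+1)$ to obtain $a_m\ge\frac1{m+1}$ does not help. That bound is trivial (take the average over $[b,b+m]$), and it controls only $\sum_{k\le m}d_k=1-a_m$, i.e.\ the partial sums in the \emph{natural} order. Passing to $d^*$ requires the drops to be non-increasing. The constraint $d_k\le a_{k-1}/(k+1)$ does not force this: it allows $d_1=0$, $d_2=\frac13$. You gesture at ``a concave or convex comparison'' but never establish one, and the merging fallback is not an argument.

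The missing ingredient is the paper's convexity lemma: $M\chi_S$ is convex on each component of $\Z\setminus S$ enlarged by one point on each side. Its proof is short. Suppose $n+1\notin S$ and, say, $M\chi_S(n)<M\chi_S(n+1)$. Then only intervals $[n+1,n+1+r]$ matter at $n+1$. Since $\chi_S(n+1)=0$ and $\frac1{r+1}\le\frac12(\frac1r+\frac1{r+2})$, their average is at most the mean of the averages over $[n+2,n+1+r]$ and $[n,n+1+r]$. This gives $(M\chi_S)''(n)\ge0$.

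Convexity yields unimodality and $d_1\ge d_2\ge\dotsb$ at once. Hence $d^*=d$, and your ``main obstacle'' reduces to the trivial $1-a_m\le\frac m{m+1}$.

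Alternatively, your per-step estimate can be completed without convexity. It is true: for bounded $S$, the optimal interval at $b+k-1$, if it misses $b+k$, must reach back to $b$, so its length is at least $k$. Write $a_k=a_{k-1}(1-\theta_k)$ with $0\le\theta_k\le\frac1{k+1}$. Then for any $k_1<\dots<k_m$,
$$\sum_j d_{k_j}\le1-\prod_j(1-\theta_{k_j})\le1-\prod_{j=1}^m\tfrac{j}{j+1}=\tfrac m{m+1}.$$
But you would have to supply this argument, and still prove unimodality separately.

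Finally, $\sum_kd_k=1-a_L<1$, so the total sums do not match. You need either the weak-majorization form of Karamata for increasing convex functions, or the paper's device of truncating the comparison sequence so that the totals agree.
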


This gives the sharp bound for \zcref{eq:charfpinterpolation}. To the best of our knowledge, this is the first sharp $\ell^p(\Z)$-bound for the gradient of a Hardy-Littlewood maximal function for any $1<p<\infty$.
The only previously known sharp bound for the gradient of maximal functions at an exponent $p\neq1,\infty$ is due to \cite{CS} who, among other results, find that $1$ is best constant at exponent $p=2$ for the convolution-type maximal operator associated to the Gauss kernel in dimensions $d\geq3$.

As a consequence of \zcref{thm:c1p} we can now improve \zcref{eq:explicit_bound_from_TO25} to
\begin{equation}\label{eq:updated_bound_from_TO25}
\left\|{(\MZu f)}^{(k)}\right\|_{\ell^p(\mathbb{Z})}\leq 2^{k-1}{\left(\sum_{n=1}^\infty {\left(\frac{1}{n}-\frac{1}{n+1}\right)}^p\right)}^\frac{1}{p}{(2k+1)}^{\frac{1}{p}}{\left\|f^{(k)}\right\|}_{\ell^p(\mathbb{Z})},
\end{equation}
thereby improving on \zcref{thm: Temur25}, the main result of \cite{TO}.

Let $\CZu_{1,p}$ denote the smallest constant such that 
for every $f:\mathbb{Z}\to\mathbb{R}$ with ${\left\|f'\right\|}_{\ell^p(\mathbb{Z})}<\infty$
one has
\begin{equation}\label{eq:def:czu}
{\left\|{(\MZu f)}'\right\|}_{\ell^p(\mathbb{Z})}\leq\CZu_{k,p}{\left\|f'\right\|}_{\ell^p(\mathbb{Z})}.
\end{equation}

For $p=1,\infty$, the results of \cite{BCHP,GM2}, stated in \zcref{thm: BCHP,thm: GM2}, imply $\CZu_{1,p}=C_{1,p}$. In both cases, $\chi_{\{0\}}$ gives equality in \zcref{eq:def:czu}. Surprisingly, in \zcref{thm:czu1p_c1p}, we show that this equality fails for every $1<p<\infty$.

\begin{theorem}\label{thm:czu1p_c1p}
Let $1<p<\infty$. Then, $\CZu_{1,p}>C_{1,p}$.
\end{theorem}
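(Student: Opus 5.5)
We prove the statement by an explicit perturbation of the extremizer $\chi_{\{0\}}$ from \zcref{thm:c1p}. Write $a_n:=\frac1n-\frac1{n+1}$ and $S:=\sum_{n\geq1}a_n^p=C_{1,p}^p$. For small $t>0$ we take
\[
f_t:=\chi_{\{0\}}+t\chi_{\{1\}} ,
\]
which is no longer a characteristic function. The aim is to show that
\[
R(t):=\frac{\|(\MZu f_t)'\|_{\ell^p(\Z)}^p}{\|f_t'\|_{\ell^p(\Z)}^p}=S+pt\bigl(S-2^{-p}\bigr)+o(t)\qquad (t\to0^+).
\]
Since $p<\infty$, all terms $a_n^p$ are positive, so $S>a_1^p=2^{-p}$. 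Hence $R(t)>S$ for small $t$, which gives $\CZu_{1,p}>C_{1,p}$. For $t=0$ we recover $R(0)=2S/2=S$, consistent with \zcref{thm:c1p}.

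\textbf{Denominator.} The nonzero values of $f_t'$ are $1$ at $n=-1$, $t-1$ at $n=0$ and $-t$ at $n=1$. Therefore
\[
\|f_t'\|_p^p=1+(1-t)^p+t^p=2-pt+o(t),
\]
where we use $p>1$ to discard $t^p=o(t)$. This is where $p>1$ enters on the input side.

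\textbf{Numerator, center and right side.} We compute $\MZu f_t$ exactly for $n\geq0$. We have $\MZu f_t(0)=1$ and $\MZu f_t(1)=\max\{t,\frac{1+t}2\}=\frac{1+t}2$. For $n\geq2$ the competing intervals are $[0,n]$ and $[1,n]$, and $\frac{1+t}{n+1}>\frac tn$, so $\MZu f_t(n)=\frac{1+t}{n+1}$ for all $n\geq1$. Thus the right-side derivatives are $\frac{1-t}2$ at $n=0$ and $(1+t)a_n$ for $n\geq1$. Their contribution is
\[
\Bigl(\tfrac{1-t}2\Bigr)^p+(1+t)^p\bigl(S-2^{-p}\bigr)=S+pt\bigl(S-2^{1-p}\bigr)+o(t).
\]

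\textbf{Numerator, left side.} For $n\leq-1$,
\[
\MZu f_t(n)=\max\Bigl\{\tfrac1{|n|+1},\tfrac{1+t}{|n|+2}\Bigr\},
\]
and the second term wins only when $|n|\geq N_t\approx 1/t$. So the left-side derivatives agree with those for $\chi_{\{0\}}$ except for $|n|\gtrsim 1/t$, where both old and new derivatives are $O(|n|^{-2})$. One has to check that the perturbed left tail, $\frac1{|n|+1}$ up to $N_t$ and then $\frac{1+t}{|n|+2}$, is still monotone with derivatives of size $O(n^{-2})$ and one transition jump of size $O(t^2)$. Granting this, the change in the left contribution is
\[
O\Bigl(\sum_{|n|\gtrsim 1/t}n^{-2p}\Bigr)+O(t^{2p})=O\bigl(t^{2p-1}\bigr)=o(t),
\]
again because $p>1$. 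The left side therefore contributes $S+o(t)$.

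\textbf{Conclusion and main obstacle.} Dividing the numerator by the denominator gives
\[
R(t)=\frac{2S+pt(S-2^{1-p})+o(t)}{2-pt+o(t)}=S+\frac{pt}{2}\bigl(S-2^{1-p}+S\bigr)+o(t)=S+pt\bigl(S-2^{-p}\bigr)+o(t),
\]
which is strictly larger than $S$ for small $t>0$. The main obstacle is the rigorous control of the left tail, namely the exact location $N_t$ of the switch between the two candidate intervals and the size of the derivative jump there, so that it is genuinely $o(t)$. If that bookkeeping is awkward, a fallback is to place the perturbation symmetrically so that the tail change is simpler to control, while keeping the first-order gain coming from the shrunken derivative $\frac{1-t}2$ next to the peak.
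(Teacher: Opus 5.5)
Your argument is correct, and it takes a genuinely different route from the paper. The paper tests one explicit function, $f=\max\{0,2-|n|\}$, for which $\MZu f(n)=\frac{4}{|n|+2}$ when $|n|\geq2$. It reduces the claim to the strict series inequality $\sum_{n\geq4}(\frac4n-\frac4{n+1})^p>2\sum_{n\geq2}(\frac1n-\frac1{n+1})^p$. It then proves this with the infinite version of Karamata's inequality: the partial sums of the two rearranged sequences coincide at even indices and are strictly ordered at odd ones.

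You instead perturb the extremizer $\chi_{\{0\}}$ of Theorem~\ref{thm:c1p} and compute the one-sided first-order coefficient $p(S-2^{-p})>0$. I checked your formulas for $\MZu f_t$ on both sides, the two expansions, and the final quotient, and they are right. There is one cosmetic slip: at position $n\geq1$ the right-side derivative is $(1+t)a_{n+1}$, not $(1+t)a_n$, but your sum $(1+t)^p(S-2^{-p})$ is correct.

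The left-tail step you flag as the main obstacle is routine. The function $g(m)=\max\{\frac1{m+1},\frac{1+t}{m+2}\}$ is a maximum of two decreasing sequences. It agrees with $\frac1{m+1}$ whenever $m+1\leq 1/t$. Since you only need a lower bound on the numerator, the left contribution is at least $\sum_{m\leq 1/t-1}a_m^p\geq S-C_pt^{2p-1}=S-o(t)$ for $p>1$.

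Your approach buys three things:
\begin{itemize}
\item It needs no majorization.
\item The mechanism is transparent. Shrinking the input jump $1\mapsto1-t$ lowers $\|f_t'\|_p^p$ at first order, since $t^p=o(t)$ for $p>1$. The output loses only the $2^{-p}$-weighted term next to the peak and gains on the rest of the right tail, since $S>2^{-p}$.
\item It shows slightly more: $\chi_{\{0\}}$ is not even a local extremizer among general functions.
\end{itemize}
The paper's approach gives a single explicit test function that works for all $1<p<\infty$ at once, with an exact, non-asymptotic strict inequality. Your admissible $t$ depends on $p$ and degenerates as $p\to1$, because the error terms $t^p$ and $t^{2p-1}$ are not uniformly $o(t)$.
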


\begin{theorem}\label{thm:c2p}
For all $1\leq p\leq \infty$ we have
\[C_{2,p}=\frac{1}{2}.\] 
Defining $S_{m}:=\{0,3,6,\dotsc,3(m-1)\}$, the sequence $(\chi_{S_m})_{m\in\N}$ is extremizing.
\end{theorem}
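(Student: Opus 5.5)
The proof has two halves: a lower bound from the explicit sequence, which is a direct computation, and an upper bound from a local charging argument, which is the real work.

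\textbf{Lower bound.} For $S_m=\{0,3,\dots,3(m-1)\}$, at interior points $\chi_{S_m}$ is $3$-periodic with pattern $1,0,0$. Hence $\chi_{S_m}''$ has period pattern $1,1,-2$. The maximal function is $1$ on $S_m$. At the two points of each gap of length two it equals $\tfrac12$: the window $\{0,1\}$ or $\{0,1,2,3\}$ gives $\tfrac12$, and no window containing such a point does better. So $\MZu\chi_{S_m}$ has period pattern $1,\tfrac12,\tfrac12$, and $(\MZu\chi_{S_m})''$ has pattern $\tfrac12,\tfrac12,-1$, which is exactly half of $\chi_{S_m}''$. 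The two ends of the configuration contribute $O(1)$ to both $\ell^p$ norms, while the interior contributes $\asymp m$ for $p<\infty$. For $p=\infty$ one checks directly that the boundary values of $|(\MZu\chi_{S_m})''|$ are at most $\tfrac12\|\chi_{S_m}''\|_\infty$. Either way the ratio tends to $\tfrac12$, giving $C_{2,p}\geq\tfrac12$.

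\textbf{Upper bound: setup.} Write $\mathbb{Z}\setminus S$ as a union of maximal gaps between maximal blocks of $S$; the case of infinite blocks or gaps is handled by truncation. Set $M=\MZu\chi_S$. On $S$ we have $M=1$. On a gap $G=\{a+1,\dots,b-1\}$, $M(n)$ is a maximum of ratios of the form $|\text{block}|/(|\text{block}|+\text{distance})$ over windows reaching into neighbouring blocks. Since $\chi_S''$ takes values in $\{0,\pm1,\pm2\}$, the target inequality
\[
\sum_n|M''(n)|^p\leq 2^{-p}\sum_n|\chi_S''(n)|^p
\]
will follow if I assign each $n$ with $M''(n)\neq0$ to a nearby endpoint of a block or gap. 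Each such endpoint carries $|\chi_S''|\geq1$ at two sites, and I need the charged mass at each endpoint to be at most $2^{-p}$ times its $\chi_S''$-mass.

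\textbf{Upper bound: the key steps, in order.}
\begin{enumerate}
\item Inside a block of length at least $3$ we have $M''=0$ away from the block's edges.
\item Inside a long gap I show $M$ is convex, or at least piecewise convex with one minimum. Then $\sum_{n\in G}|M''(n)|$ telescopes to a difference of the boundary slopes of $M$ at the two ends of $G$. Combined with $|M'|\leq\tfrac12$ from \zcref{thm: GM2}, this bounds the $\ell^1$ mass of $M''$ inside $G$. Together with $|M''|\leq 1$, I then convert the $\ell^1$ bound into the needed $\ell^p$ bound.
\item The genuinely combinatorial part is near short gaps (length $1$ or $2$) and short blocks (length $1$ or $2$). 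There $M''$ and $\chi_S''$ are comparable pointwise, and I would verify the local inequality by finite case analysis over the few configurations of lengths in a window of bounded size around each edge.
\end{enumerate}

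\textbf{Expected obstacle.} The main difficulty is making the charging sharp in step 3. The extremal configuration, gaps of length exactly $2$ between singletons, achieves the constant $\tfrac12$ with no slack. So at every edge the local bound must be exact rather than lossy. I would first try to prove the pointwise bound $|M''(n)|\leq\tfrac12\max_{|m-n|\leq1}|\chi_S''(m)|$ whenever $n$ is within distance $2$ of $S$. I would pair this with an injective map from such $n$ to sites with $\chi_S''\neq0$, and reserve the telescoping argument of step 2 only for points deep inside long gaps, where $M''$ is small.
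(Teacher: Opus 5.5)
\textbf{Lower bound.} Your lower bound is fine. Computing $\MZu\chi_{S_m}$ directly on the periodic interior is equivalent to the paper's route through $\chi_{\{0\}}$ on $\mathbb{Z}_3$ and the periodization lemma.

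\textbf{Upper bound, step 2.} Here there is a genuine gap. Take a gap $\{b+1,\dots,a-1\}$ of length at least $2$, with $b,a\in S$.
\begin{itemize}
\item The points $n\in[b,a-2]$ (those with $n+1\notin S$) are where $M''\geq0$.
\item The $\chi_S''$-mass available to them is $\chi_S''(b)=\chi_S''(a-2)=1$, a budget of $2\cdot2^{-p}=2^{1-p}$.
\item The adjacent concave points have no slack to lend in general: for a singleton block, $M''=-1$ against $\chi_S''=-2$.
\end{itemize}
Convexity and $|M'|\leq\frac12$ give $\sum_{n=b}^{a-2}M''(n)\leq1$. Together with $0\leq M''\leq1$, this only yields $\sum_{n=b}^{a-2}M''(n)^p\leq1$. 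That exceeds $2^{1-p}$ for every $p>1$, and it gives nothing at $p=\infty$ when $\|\chi_S''\|_{\ell^\infty}=1$. Already for gaps of length exactly $2$, where your extremal example lives, you must show $M''(b),M''(b+1)\leq\frac12$. This is not a finite case analysis, because $M$ near an edge depends on arbitrarily distant blocks.

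\textbf{Your proposed remedy.} The plan in your last paragraph does not close this gap.
\begin{itemize}
\item The proposed pointwise bound is false at distance $2$. For $S=\{0\}$ we have $M(n)=\frac1{1+|n|}$, so $M''(2)=\frac13+\frac15-\frac12=\frac1{30}$, while $\chi_S''(1)=\chi_S''(2)=\chi_S''(3)=0$.
\item No injective charging exists. In this example $\chi_S''$ is supported on $\{-2,-1,0\}$, while $M''$ vanishes nowhere.
\item In general an edge has more nearby points with $M''\neq0$ than sites with $\chi_S''\neq0$. Since the extremal configuration is tight, nothing is left over for the deep points.
\end{itemize}

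\textbf{The missing idea.} What you need is a \emph{uniform} cap: $0\leq M''(n)\leq\frac12$ for \emph{every} $n$ with $n+1\notin S$ and $|\chi_S''(n)|\leq1$, i.e.\ throughout any gap of length at least $2$. Then $\sum_{n=b}^{a-2}M''(n)^p\leq2^{1-p}\sum_{n=b}^{a-2}M''(n)\leq2^{1-p}$, which is exactly the budget, and the case $p=\infty$ follows too. This cap is the real content of the paper's proof.
\begin{itemize}
\item If $M'(n)$ and $M'(n+1)$ have the same sign, the cap follows from $|M'|\leq\frac12$.
\item Otherwise $n+1$ is a strict local minimum. So $n$ and $n+2$ are not global minima, and therefore they have bounded optimal intervals $I_n\subset(-\infty,n]$ and $I_{n+2}\subset[n+2,\infty)$.
\item Testing $M(n+1)$ against $I_n\cup\{n+1\}\cup I_{n+2}$, and using $M(n+1)<M(n+2)$, gives an explicit rational bound that is at most $\frac12$. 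The case $|I_n|=1$ is exactly where the hypothesis that the gap has length at least $2$ enters.
\end{itemize}

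\textbf{Step 3 is the easy part.} The short configurations you single out in step 3 are not where the difficulty lies.
\begin{itemize}
\item Where $|\chi_S''(n)|=2$, one just uses $|M''|\leq1$.
\item At the concave edge points of blocks of length at least $2$, one has $|M''|=1-M(\cdot)\leq\frac12$, because $M\geq\frac12$ next to $S$.
\end{itemize}
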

This sharpens \zcref{thm: Temur22}, the main result from \cite{Te2}.

\begin{remark}
At $p=1,\infty$, the function $\chi_{\{0\}}$ is also an extremizer for $C_{2,p}$; however, it is not an extremizer at $1<p<\infty$.
\end{remark}

\begin{theorem}\label{thm:ckp}
For all $k\geq 0$, $1\leq p<\infty$ we have
\[C_{k,p}\geq\frac{1}{2}.\]
This lower bound is witnessed by the sequence $(\chi_{S_m})_{m\in\N}$ from \zcref{thm:c2p}.
\end{theorem}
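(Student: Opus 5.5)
We argue separately for $k=0$ and $k\geq1$. For $k=0$ nothing new is needed: by \zcref{thm:c0p}, $C_{0,p}=\left(\frac{p+1}{p-1}\right)^{1/p}\geq1>\frac12$. Alternatively, $\MZu\chi_S\geq\chi_S$ pointwise already gives $C_{0,p}\geq1$. So assume $k\geq1$, fix $1\leq p<\infty$, and take $S_m=\{0,3,\dots,3(m-1)\}$. The idea is that in the bulk of $[0,3(m-1)]$ the maximal function is exactly the affine image $\frac12+\frac12\chi_{S_m}$. Hence $k$th differences agree up to the factor $\frac12$ there, and everything else contributes $o(m)$ to the $p$th powers.

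\textbf{Step 1: exact interior formula.} First, $\MZu\chi_{S_m}(n)=1$ for $n\in S_m$. Now let $n\in[0,3(m-1)]\setminus S_m$. An interval $I\ni n$ of length $L$ contains at most $\lceil L/3\rceil$ points of $S_m$, and at most $L-1$ of them since $n\notin S_m$. For $L=1$ this gives $0$, for $L=2,3$ it gives $1$, and for $L\geq4$ it gives $\lceil L/3\rceil\leq\frac{L+2}{3}\leq\frac L2$. So the average is at most $\frac12$. This is attained by the pair $\{n,n\pm1\}$ containing the neighbouring point of $S_m$. Hence
\[
\MZu\chi_{S_m}=\tfrac12+\tfrac12\chi_{S_m}\quad\text{on }[0,3(m-1)].
\]
Therefore $(\MZu\chi_{S_m})^{(k)}(n)=\frac12\chi_{S_m}^{(k)}(n)$ whenever $\{n,\dots,n+k\}\subseteq[0,3(m-1)]$, which holds for $3m-O(k)$ values of $n$.

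\textbf{Step 2: the denominator grows linearly.} On this interior range $\chi_{S_m}$ is $3$-periodic and not constant, hence not a polynomial sequence. So $\chi_{S_m}^{(k)}$ is a nonzero $3$-periodic sequence there, and each period contributes a fixed positive amount $c_k$. This gives $\|\chi_{S_m}^{(k)}\|_{\ell^p}^p\geq c_k m-O(1)$. Outside the interior, $\chi_{S_m}^{(k)}$ is supported on $O(k)$ points and is bounded by $2^k$, so $\|\chi_{S_m}^{(k)}\|_{\ell^p}^p=c_km+O(1)$.

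\textbf{Step 3: the tails are $o(m)$ (main obstacle).} By symmetry, consider $n=3(m-1)+d$ with $d\geq1$. Testing intervals $[3(m-1-j),n]$ suggests
\[
\MZu\chi_{S_m}(n)=\max_{0\leq j\leq m-1}\frac{j+1}{3j+d+1}.
\]
For $d\geq2$ this expression is nondecreasing in $j$, so the maximum is $g(d)=\frac{m}{3m+d-2}$ up to the finitely many small $d$. The thing to verify carefully is that no other interval does better; the same counting as in Step 1 should give this. Then $g$ is a rational function whose $k$th difference is $O\!\left(m(3m+d)^{-k-1}\right)$. Consequently
\[
\sum_{d}\bigl|(\MZu\chi_{S_m})^{(k)}\bigr|^p\lesssim \sum_{d\geq1} m^p(3m+d)^{-p(k+1)}\lesssim m^{1-pk},
\]
plus $O(1)$ from the boundary-crossing stencils. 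For $k\geq1$ and $p\geq1$ this is $O(1)=o(m)$. The $k=0$ tail would not be negligible, which is why that case was handled separately.

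\textbf{Conclusion.} Combining the steps,
\[
\|(\MZu\chi_{S_m})^{(k)}\|_{\ell^p}^p=2^{-p}c_km+O(1),
\]
so the ratio $\|(\MZu\chi_{S_m})^{(k)}\|_{\ell^p}/\|\chi_{S_m}^{(k)}\|_{\ell^p}$ tends to $\frac12$. Hence $C_{k,p}\geq\frac12$, witnessed by the sequence $(\chi_{S_m})_{m\in\N}$ as claimed.
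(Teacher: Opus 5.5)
Your proof is correct. It uses the same witness and the same mechanism as the paper: in the bulk of $S_m$ the maximal function equals $\frac12+\frac12\chi_{S_m}$. What differs is how this is transferred to $\Z$.

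The paper checks $\Mu{\mathbb{Z}_3}\chi_{\{0\}}=\frac12\chi_{\{0\}}+\frac12$ on the discrete circle, so $k$th derivatives scale by exactly $\frac12$ for $k\geq1$. It then invokes the general periodization result \zcref{lem:discretevsperiodic}, which gives $\liminf\geq\frac12$; the tail control for the maximal function is only indicated there.

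You instead carry out that lemma by hand for this example. Step 1 proves the identity on all of $[0,3(m-1)]$, Step 2 gives linear growth of the denominator, and Step 3 gives an explicit tail formula whose $k$th differences have $\ell^p$-mass $O(m^{1-pk})$.

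Each route has its advantages:
\begin{itemize}
\item Your route is self-contained and shows that the ratio converges to exactly $\frac12$. It also makes visible why $k\geq1$ is needed: for $k=0$ the tail mass is comparable to $m$, or infinite when $p=1$. You rightly settle that case via $\MZu\chi_S\geq\chi_S$.
\item The paper's route is shorter and reusable. The same lemma immediately handles the $\mathbb{Z}_8$ example in \zcref{thm:ckpgeq}, where a by-hand tail computation would be messier.
\end{itemize}

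The point you flagged in Step 3 is easy, though it does not come from the counting in Step 1. For $n=3(m-1)+d$ with $d\geq1$, take any interval $I\ni n$ meeting $S_m$ and shrink it to $[\min(I\cap S_m),n]\subseteq I$. This keeps every point of $I\cap S_m$, so the average does not decrease, and it yields exactly $\max_{0\leq j\leq m-1}\frac{j+1}{3j+d+1}$.
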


As $k$ grows, we have the following improvement on the previous bound.

\begin{theorem}\label{thm:ckpgeq}
For all $k\geq0$ and $1\leq p\leq\infty$ we have
\[C_{k,p}\geq\frac{1}{12\cdot 8^{\left|\frac1p-\frac12\right|}}\left(\frac{2}{|1+e^{\frac{\pi i}4}|}\right)^k.\]
In particular, $C_{k,p}$ grows at least exponentially in $k$, uniformly in $p$.
This bound is witnessed by the sequence $(\chi_{B_m})_{m\in\mathbb{N}}$, where
\begin{align*}
B_1
&=
\{0,1,2,5\}
,&
B_{m+1}
&=
B_m\cup(B_1+8m)
.
\end{align*}
\end{theorem}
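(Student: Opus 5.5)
The plan is to pass from $B_m$ to the $8$-periodic set $B_\infty:=\{0,1,2,5\}+8\Z$, compare the two sides on one period with the discrete Fourier transform on $\Z/8\Z$, and then return to the finite sets $B_m$ through a truncation argument.

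\textbf{Step 1: the periodic maximal function.} Let $f=\chi_{B_\infty}$. I will compute $\MZu f$ on one period; it is $8$-periodic. On the residues $0,1,2,5$ it equals $1$. At $n\equiv 3$ the best interval is $\{0,\dots,3\}$, giving $3/4$. At $n\equiv 7$ the best interval is $\{7,\dots,10\}$, giving $3/4$. At $n\equiv 4$ the best interval is $\{0,\dots,5\}$, giving $2/3$. At $n\equiv 6$ the best interval is $\{5,\dots,10\}$, giving $2/3$. To justify these values, note that any interval of length at least $7$ has average at most $1/2+O(1/\text{length})$, and checking this bound together with the finitely many short intervals confirms that these are the suprema. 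Hence on residues $0,\dots,7$,
\[
\MZu f=(1,1,1,\tfrac34,\tfrac23,1,\tfrac23,\tfrac34).
\]

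\textbf{Step 2: Fourier comparison on one period.} For $8$-periodic $g$ set $\hat g(j)=\sum_{n=0}^7 g(n)\omega^{-jn}$ with $\omega=e^{2\pi i/8}$. The $k$th difference multiplies $\hat g(j)$ by $(\omega^j-1)^k$, and $|\omega^j-1|=2|\sin(\pi j/8)|$.

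For $f$ we have $\hat f(4)=1-1+1-1=0$. Every other frequency satisfies $|\omega^j-1|\le 2\sin(3\pi/8)=|1+e^{\pi i/4}|$. By Parseval on one period, writing $\|\cdot\|_2$ for the $\ell^2$ norm over one period,
\[
\|f^{(k)}\|_{2}\le \tfrac{1}{\sqrt8}\Big(\sum_j|\hat f(j)|^2\Big)^{1/2}|1+e^{\pi i/4}|^k=2\,|1+e^{\pi i/4}|^k,
\]
using $\sum_j|\hat f(j)|^2=8\cdot 4$.

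For the maximal function, $\widehat{\MZu f}(4)$ is the alternating sum of the values from Step 1, which equals $-1/6$. Keeping only that frequency gives
\[
\|(\MZu f)^{(k)}\|_2\ge \tfrac{1}{\sqrt8}\cdot\tfrac16\cdot 2^k.
\]
The per-period $\ell^2$ ratio is therefore at least $\frac1{12}\bigl(2/|1+e^{\pi i/4}|\bigr)^k$.

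To pass from $\ell^2$ to $\ell^p$ over $8$ points, I use the Hölder comparisons $\|g\|_p\ge 8^{\min(0,1/p-1/2)}\|g\|_2$ for the numerator and $\|h\|_p\le 8^{\max(0,1/p-1/2)}\|h\|_2$ for the denominator. Together these lose exactly the factor $8^{-|1/p-1/2|}$, which gives the claimed constant for the periodic function.

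\textbf{Step 3: truncation.} The remaining task is to show the per-period ratio is attained in the limit by $\chi_{B_m}$. I claim $\MZu\chi_{B_m}(n)=\MZu f(n)$ for all $n$ at distance at least some absolute constant $c$ from $[0,8m)$'s endpoints, and that $\MZu\chi_{B_m}=O(1/\mathrm{dist})$ far outside. The reason is that the suprema in Step 1 are attained by short intervals lying inside $B_m$'s pattern, intervals that leave $[0,8m)$ only lose mass, and long intervals have average near $1/2<2/3$.

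It follows that $(\MZu\chi_{B_m})^{(k)}$ agrees with $(\MZu f)^{(k)}$ on all but $O_k(1)$ points of $[0,8m)$. Outside, its $\ell^p$ norm is $O_k(1)$, since the $k$th difference of a tail of size about $1/|n|$ decays like $|n|^{-1-k}$, and for $k=0$, $p>1$ it is still $\ell^p$-summable. The same kind of $O_k(1)$ boundary error holds for $\chi_{B_m}^{(k)}$.

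For $p<\infty$, both $p$th powers grow like $m$ times the per-period quantity plus $O_k(1)$, so the ratio converges to the periodic ratio. For $p=\infty$, the interior points already realize the periodic sup norms.

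The case $k=0$, $p=1$ needs separate care, because $\MZu\chi_{B_m}\notin\ell^1$ and hence $C_{0,1}=\infty$. There the inequality is trivial.

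I expect the main obstacle to be making the localization claim of Step 3 rigorous, that is, verifying carefully that no long or boundary-crossing interval beats the short ones. The Fourier computation itself is mechanical.
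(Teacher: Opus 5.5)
Your route is the same as the paper's: the $8$-periodic example $f=\chi_{\{0,1,2,5\}}$, the frequency $\xi=4$ where $\hat f$ vanishes but $\widehat{\MZu f}(4)=-\frac16$, H\"older on eight points, and truncation back to $\Z$ (the paper's transfer lemma). There is a gap in the constant. Your two displayed bounds give
\[
\frac{\|(\MZu f)^{(k)}\|_2}{\|f^{(k)}\|_2}\geq\frac{2^k/(6\sqrt8)}{2\,|1+e^{\pi i/4}|^k}=\frac{1}{12\sqrt8}\Bigl(\frac{2}{|1+e^{\pi i/4}|}\Bigr)^k ,
\]
which is a factor $\sqrt8$ short of $\frac1{12}$. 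The estimate $\|f^{(k)}\|_2\leq2|1+e^{\pi i/4}|^k$ is too lossy to reach $\frac1{12}$. It charges the full Parseval mass $\sum_\xi|\hat f(\xi)|^2=32$ to the top multiplier, but the frequencies $\xi=3,5$ that carry that multiplier hold only $4$ of it. (The paper's display reaches $\frac1{12}$ by writing $\|\hat f\|^2_{\ell^2(\Z_8)}=4$. Under its own Parseval normalization this equals $32$, so you cannot borrow that step.)

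What does give $\frac1{12}$ for $k\geq1$ is the exact spectrum; the case $k=0$ is trivial since $\MZu\chi_S\geq\chi_S$. For $\xi=0,\dots,7$ one has $|\hat f(\xi)|^2=16,2,4,2,0,2,4,2$ and $|\widehat{\MZu f}(\xi)|^2=(41/6)^2,\frac29,\frac14,\frac29,\frac1{36},\frac29,\frac14,\frac29$. With $\alpha=2-\sqrt2$ and $\beta=2+\sqrt2=|1+e^{\pi i/4}|^2$ this gives
\[
\|f^{(k)}\|_2^2=\tfrac12\alpha^k+2^k+\tfrac12\beta^k,\qquad\|(\MZu f)^{(k)}\|_2^2=\tfrac1{18}\alpha^k+\tfrac1{16}2^k+\tfrac1{18}\beta^k+\tfrac1{288}4^k .
\]
The desired inequality $144\|(\MZu f)^{(k)}\|_2^2\geq(4/\beta)^k\|f^{(k)}\|_2^2$ then reduces to $8\alpha^k+9\cdot2^k+8\beta^k\geq\frac12(12-8\sqrt2)^k+(8-4\sqrt2)^k$, which is clear. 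You must keep the frequencies $\xi\neq4$ of $\MZu f$ as well; with $\xi=4$ alone the constant falls slightly short of $\frac1{12}$.

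The case $p=\infty$ of Step 3 also has a gap. Your remark that interior points realize the periodic sup norms handles only the numerator. The denominator $\|\chi_{B_m}^{(k)}\|_{\ell^\infty(\Z)}$ also sees windows straddling the ends of $[0,8m)$, and these are not dominated by the periodic sup. For $k=8$, $\chi_{B_m}^{(8)}(8m-7)=1-8+70=63$, whereas $\max_{\Z_8}|f^{(8)}|=56$. For large $k$, Abel summation against the binomial weights shows such boundary values are at least of order $2^k/\sqrt k$. Since $|(\MZu\chi_{B_m})^{(k)}|\leq2^{k-1}$ everywhere, the $\ell^\infty$ ratio along $(\chi_{B_m})$ is $O(\sqrt k)$ and cannot witness an exponential bound. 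The inequality for $C_{k,\infty}$ still holds: at $p=\infty$ the definition imposes no finiteness condition on $S$, so you may insert the periodic set $B_\infty$ directly, and its ratio is exactly the periodic one. (The paper's transfer lemma dismisses $p=\infty$ as purely local and rests on the same oversight.)

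For $p<\infty$ your truncation is sound, with one correction. $\MZu\chi_{B_m}(8m+d)=\frac{4m}{8m+d+1}$, which is not $O(1/d)$ uniformly in $m$. This is harmless: since you only need a lower bound, you can discard the numerator outside $[0,8m)$, and only the $O_k(1)$ boundary error of the denominator matters.
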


\begin{remark}
Somewhat surprisingly, \zcref{thm:ckpgeq} shows that the exponential growth in $k$ appearing in \zcref{thm: Temur25} is sharp, although only in the weak sense that both the lower and upper bounds are exponential in $k$, with different bases strictly larger than one,
\begin{equation}
\label{exponentialestimates}
1<1.0824\approx\frac{2}{|1+e^{\frac{\pi i}{4}}|}\leq\liminf_{k\to\infty} C_{k,p}^{\frac1k}\leq\limsup_{k\to\infty} C_{k,p}^{\frac1k}\leq2.
\end{equation}
\end{remark}

We can now update \zcref{table:ckp} to \zcref{table:ckpnew}.
\begin{table}[h]
\centering
\begin{threeparttable}
\begin{tabular}{|c|cccc|}
\hline
$p$\textbackslash$k$ & 0 & 1 & 2 & $\geq3$
\\
\hline
1 & $\infty$ & 1 & \fbox{$\frac12$} & \fbox{$\frac12\leq C_{k,1}\sim C^k$}
\\
$1<p<\infty$
&
\fbox{$\left(\frac{p+1}{p-1}\right)^{\frac1p}$}
&
\fbox{${\left(\sum_{n=1}^\infty {\left(\frac{1}{n}-\frac{1} {n+1}\right)}^p\right)}^{\frac{1}{p}}$}
&
\fbox{$\frac12$}
&
\fbox{$\frac12\leq C_{k,p}\sim C^k$}
\\
$\infty$ & 1 & $\frac12$ & \fbox{$\frac12$} & \fbox{$\frac12\leq C_{k,\infty}\sim C^k$}
\\
\hline
\end{tabular}
\begin{tablenotes}
\footnotesize
\item[] Here \(C_{k,p}\sim C^k\) is not meant as asymptotic equivalence up to multiplicative constants. It indicates exponential growth in \(k\), in the sense of \eqref{exponentialestimates}.
\end{tablenotes}
\caption{The updated values for \(C_{k,p}\) are boxed.}
\label{table:ckpnew}
\end{threeparttable}
\end{table}

\paragraph{Proof strategies}

\begin{enumerate}
\item
The proof of \zcref{thm:c0p} follows the same lines as \zcref{operator_norm_of_uncentered_maximal_operator_in_continuous_setting}, the corresponding result for $f\in L^p(\R)$ from \cite{GM3}.
\item
\zcref[S]{thm:czu1p_c1p} and the upper bounds in \zcref{thm:c1p,thm:c2p}
are obtained with the help of various convexity arguments such as Jensen's inequality or Karamata's inequality; see \zcref{Karamata}.
\item
The lower bounds in \zcref{thm:c2p,thm:ckp,thm:ckpgeq} are all witnessed by the functions $\chi_{\{0\}}$ on the circle $\Z_3=\{0,1,2\}$ and $\chi_{\{0,1,2,5\}}$ on the circle $\Z_8$ respectively. They extend from the periodic setting to $\mathbb{Z}$ by \zcref{lem:discretevsperiodic}.
\item
We initially discovered the example $\chi_{\{0,1,2,5\}}$ on the circle of length $8$ through numerical experiments; the source code is available at \cite{discretemf}.
More precisely, for a given circle, we enumerated all characteristic functions $\chi_S$, computed its maximal functions $\MZu\chi_S$ and evaluated $\|\MZu \chi_S^{(k)}\|_{\ell^p(\mathbb{Z})}/\|\chi_S^{(k)}\|_{\ell^p(\Z)}$ for selected values of $k$ and $p$.
Since the number of characteristic functions on a circle grows exponentially in its length, this approach becomes prohibitively expensive quickly, limiting us to circles of length at most about $30$.
Within these limitations, our numerical experiments suggest that for any $1\leq p\leq \infty$ the map $k\mapsto C_{k,p}$ is nondecreasing for $k\geq2$. 
Note that $k\mapsto C_{k,p}$ is decreasing for $k$ on $\{0,1,2\}$.
Surprisingly, we have not been able to experimentally improve on the lower bound $C_{k,p}\geq1/2$, witnessed by $\chi_{\{0\}}$ in \zcref{thm:ckp}, for any $k,p$ with $3\leq k\leq 6$ and $1\leq p\leq\infty$.
\item
The proof of the lower bound \zcref{thm:ckpgeq} relies on the discrete Fourier transform on the circle.
To the best of our knowledge this marks only the second instance of a successful application of Fourier methods to regularity of maximal functions, after \cite{BRS}.
In comparison to \cite{BRS}, our Fourier methods are significantly more elementary, however continue to apply at the endpoint $p=1$.
\end{enumerate}


\section{Preliminaries}\label{sec:preliminaries}
In this section, we will introduce some basic but useful observations related to the uncentered maximal operators in the discrete setting.

\subsection{Karamata's inequality}

\begin{lemma}[Karamata's inequality]\label{Karamata}
Let $g:\mathbb{R}\to\mathbb{R}$ be a convex function, $d\in\mathbb{N}$, and assume for $i=1,\ldots,d$ and $s_i,t_i\in\mathbb{R}$ the following hold:
\begin{itemize}
\item $s_1\geq s_2\geq\dotsb\geq s_d$ and $t_1\geq t_2\geq\dotsb\geq t_d$;
\item $s_1+\dotsb+s_i\geq t_1+\dotsb+t_i$ for all $i\in\{1,\dotsc,d-1\}$;
\item $s_1+\dotsb+s_d=t_1+\dotsb+t_d$.
\end{itemize}
Then, 
\begin{equation}\label{ineq:Karamata}
g(s_1)+\dotsb+g(s_d)\geq g(t_1)+\dotsb+g(t_d).
\end{equation}
Moreover, if $g$ is strictly convex, then equality in \zcref{ineq:Karamata} holds if and only if $s_i=t_i$ for all $i=1,\dotsc,d$.
\end{lemma}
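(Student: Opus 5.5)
A standard route to Karamata's inequality goes through Abel summation combined with the monotonicity of difference quotients of a convex function.

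First, I dispose of indices with $s_i=t_i$. Whenever $s_i\neq t_i$, define
\[
c_i:=\frac{g(s_i)-g(t_i)}{s_i-t_i}.
\]
When $s_i=t_i$, I instead let $c_i$ be any value between the left and right derivatives of $g$ at $s_i$; these one-sided derivatives exist because $g$ is convex on $\R$. The key fact is the monotonicity of slopes for a convex function: the chord slope over an interval $[a,b]$ is nondecreasing in each endpoint. Since $s_i\geq s_{i+1}$ and $t_i\geq t_{i+1}$, this gives $c_1\geq c_2\geq\dotsb\geq c_d$. The degenerate indices need a short check here: a slope taken at a point is squeezed between the chord slopes on either side, so the chosen one-sided-derivative values still fit into the monotone chain.

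Next, I apply summation by parts. Write $A_i:=\sum_{j\leq i}(s_j-t_j)$ and $A_0:=0$. The hypotheses say exactly that $A_i\geq0$ for $i<d$ and $A_d=0$. Then
\[
\sum_{i=1}^d\bigl(g(s_i)-g(t_i)\bigr)=\sum_{i=1}^d c_i(A_i-A_{i-1})=\sum_{i=1}^{d-1}A_i(c_i-c_{i+1})+c_dA_d,
\]
and every term on the right is nonnegative. This proves \zcref{ineq:Karamata}.

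For the equality case with $g$ strictly convex, suppose equality holds. Then $A_i(c_i-c_{i+1})=0$ for every $i<d$. Take the first index $i$ with $s_i\neq t_i$, and assume without loss of generality that $s_i>t_i$; this is forced, since $A_i\geq0$ and $A_{i-1}=0$. Then $A_i>0$, so $c_i=c_{i+1}$.

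The main obstacle is turning $c_i=c_{i+1}$ into a contradiction. Strict convexity makes chord slopes strictly increasing in each endpoint, so $c_i=c_{i+1}$ together with $s_i\geq s_{i+1}$ and $t_i\geq t_{i+1}$ forces $s_{i+1}=s_i$ and $t_{i+1}=t_i$. Then $A_{i+1}=2(s_i-t_i)>0$, and the same argument propagates: by induction, $s_j=s_i$ and $t_j=t_i$ for all $j\geq i$. But then $A_d=(d-i+1)(s_i-t_i)\neq0$, contradicting $A_d=0$. The degenerate slopes, where $s_j=t_j$ and $c_j$ is a one-sided derivative, require the same strictness statement for derivatives, which also holds under strict convexity. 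Hence $s_i=t_i$ for all $i$.

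The converse direction of the equality statement is trivial.
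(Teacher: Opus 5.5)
The paper does not prove this lemma; it quotes Karamata's inequality as a classical fact. Your Abel-summation argument is the standard proof and is correct, apart from one slip in how you handle indices with $s_i=t_i$.

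\textbf{Degenerate indices.} You choose $c_i$ as an \emph{arbitrary} value in $[g'_-(s_i),g'_+(s_i)]$, separately for each such index. That does not guarantee $c_1\geq\dotsb\geq c_d$. Your squeeze argument only compares a degenerate index with a non-degenerate neighbour. It does not cover two adjacent degenerate indices at the same point.

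For example, take $g(x)=|x-1|$, $s=(3,1,1,-1)$ and $t=(2,1,1,0)$. Then $A_1=A_2=A_3=1$ and $A_4=0$. The choice $c_2=-1$, $c_3=1$ gives $A_2(c_2-c_3)=-2<0$. The summation-by-parts identity still holds, but termwise nonnegativity fails, so the argument does not conclude.

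The fix is to make the choice depend only on the point, for instance $c_i:=g'_+(s_i)$ whenever $s_i=t_i$. Any selection $x\mapsto c(x)\in[g'_-(x),g'_+(x)]$ is nondecreasing, since $g'_+(x)\leq g'_-(y)$ for $x<y$. Together with your squeeze argument and your endpoint-monotonicity argument, this gives the full monotone chain.

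\textbf{Equality case.} You should note that the first index $i$ with $s_i\neq t_i$ satisfies $i<d$. Otherwise $s_d-t_d=A_d-A_{d-1}=0$. This is needed for the term $A_i(c_i-c_{i+1})$ to exist. The rest of your propagation argument is fine. In particular, if index $i+1$ is degenerate at $y\leq t_i<s_i$, strict convexity gives $c_{i+1}<c_i$, so $c_i=c_{i+1}$ forces $s_{i+1}=s_i$ and $t_{i+1}=t_i$ as you claim.
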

\begin{remark}\label{infinite_Karamata}
\zcref[S]{Karamata} also holds for $d=\infty$, i.e.\ infinite sequences $(s_i)_{i=1}^\infty$ and $(t_i)_{i=1}^\infty$.
\end{remark}

\subsection{Basic properties of maximal functions}

\begin{lemma}\label{flat_bound}
For any function $f:\mathbb{Z}\to\mathbb{R}$ and $n\in\mathbb{Z}$, we have 
\(
\MZu f(n)
\geq
\frac12
\MZu f(n\pm1)
.
\)
In particular, if $S\subset\mathbb{Z}$ and $f=\chi_S$ then
\begin{align*}
\|
(\MZu f)'
\|_{\ell^\infty(\Z)}
&\leq
\frac12
,&
\|
(\MZu f)''
\|_{\ell^\infty(\Z)}
&\leq
1
.
\end{align*}
\end{lemma}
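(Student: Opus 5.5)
The plan is to prove the pointwise inequality $\MZu f(n)\geq\frac12\MZu f(n+1)$ first; the case $n-1$ is symmetric. We then deduce the two derivative bounds from it together with $0\leq\MZu\chi_S\leq1$.

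For the pointwise inequality, fix a discrete interval $I\ni n+1$. If $n\in I$, then directly $\MZu f(n)\geq\av\Z If$. Otherwise $I=\{n+1,\dots,n+m\}$ for some $m\geq1$, and we set $J:=I\cup\{n\}$, which contains $n$ and has $\cm J=m+1\leq 2m=2\cm I$. Since the summands $|f|$ are nonnegative,
\[
\av\Z Jf=\frac{1}{m+1}\sum_{i\in J}|f(i)|\geq\frac{1}{m+1}\sum_{i\in I}|f(i)|\geq\frac{1}{2m}\sum_{i\in I}|f(i)|=\frac12\av\Z If.
\]
In both cases $\MZu f(n)\geq\frac12\av\Z If$. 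Taking the supremum over all $I\ni n+1$ gives $\MZu f(n)\geq\frac12\MZu f(n+1)$.

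For the first derivative, put $a=\MZu\chi_S(n)$ and $b=\MZu\chi_S(n+1)$, both in $[0,1]$. By the inequality just proved (in both directions), $a\geq b/2$ and $b\geq a/2$. Assume $b\geq a$. Then $b-a\leq b-b/2=b/2\leq\frac12$; the case $a\geq b$ is the same. Hence $|(\MZu\chi_S)'(n)|\leq\frac12$.

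For the second derivative, $(\MZu\chi_S)''(n)=g(n+2)-2g(n+1)+g(n)$ with $g=\MZu\chi_S$ and all three values in $[0,1]$. Writing $y=g(n+1)$, we have $g(n),g(n+2)\in[y/2,\min(1,2y)]$.

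The upper bound is
\[
g(n)+g(n+2)-2y\leq 2\min(1,2y)-2y\leq 1,
\]
because $2\cdot 2y-2y=2y\leq1$ when $y\leq\frac12$, and $2-2y\leq1$ when $y\geq\frac12$.

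The lower bound is
\[
g(n)+g(n+2)-2y\geq y-2y=-y\geq-1.
\]
Therefore $\|(\MZu\chi_S)''\|_{\ell^\infty(\Z)}\leq1$.

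No step is a real obstacle; the only point needing care is using both directions of the pointwise inequality together with the bound $g\leq1$ in the second-derivative case analysis.
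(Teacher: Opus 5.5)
Your proof is correct and follows the paper's argument: adding one adjacent point to an interval at most doubles its length, so the average drops by at most a factor of $\frac12$. The paper mirrors this by extending intervals containing $n$ to ones containing $n+1$, and it treats the derivative bounds as immediate. Your verification of those bounds is fine, though for the second derivative the triangle inequality $|(\MZu\chi_S)''(n)|\leq|(\MZu\chi_S)'(n+1)|+|(\MZu\chi_S)'(n)|\leq\frac12+\frac12$ is quicker than your case analysis.
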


\begin{proof}
Let $n\in\mathbb{Z}$ and $I\ni n$ be an interval.
Then there exists an interval $J\supset I$ with $n+1\in J$ and \(\cm J\leq\cm I+1\leq 2\cm I\).
Thus \(\av \Z J f\geq\av \Z I f/2\) and
we can conclude $\MZu f(n)\geq\frac12\MZu f(n+1)$.
We can prove $\MZu f(n)\geq\frac12\MZu f(n-1)$ similarly.
\end{proof}

\begin{lemma}\label{lem: convexity}
Let $S\subset\Z$.
\begin{enumerate}
\item
\label{it:concavity}
If $n+1\in S$, then ${(\MZu\chi_S)}''(n)\leq0$.
\item
\label{it:convexity}
If $n+1\in\mathbb{Z}\setminus S$, then ${(\MZu\chi_S)}''(n)\geq0$.
\end{enumerate}
\end{lemma}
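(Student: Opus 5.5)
The plan is to write $M:=\MZu\chi_S$ and note that
\[
M''(n)=M(n+2)-2M(n+1)+M(n).
\]
So \ref{it:concavity} amounts to $M(n+1)\geq\frac12(M(n)+M(n+2))$, and \ref{it:convexity} amounts to $M(n+1)\leq\frac12(M(n)+M(n+2))$. Both should follow from comparing intervals attaining (or nearly attaining) the suprema at the neighbouring points.

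For \ref{it:concavity}, since $n+1\in S$ we have $M(n+1)=1$, because the interval $\{n+1\}$ has average $1$. Since $0\leq M\leq1$ everywhere, this gives
\[
M(n)+M(n+2)\leq 2=2M(n+1),
\]
hence $M''(n)\leq0$.

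For \ref{it:convexity}, assume $n+1\notin S$. Take an interval $I\ni n+1$ with average $A:=\av\Z I{\chi_S}$ arbitrarily close to $M(n+1)$; since $\chi_S$ takes finitely many values on bounded intervals but intervals may be unbounded in length, we work with a near-maximizer. If $A=0$ there is nothing to prove, because $M\geq0$. Otherwise $I$ must contain a point of $S$, so $I\neq\{n+1\}$. Write $I=[a,b]$ with $a\leq n+1\leq b$.

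\emph{Case $a=n+1$.} Then $b\geq n+2$ and $I\ni n+2$, so $M(n+2)\geq A$. Moreover $I\setminus\{n+1\}=[n+2,b]$ contains $n+2$, has one fewer element and the same number of points of $S$, so
\[
M(n+2)\geq \frac{\cm I}{\cm I-1}A\geq A.
\]
Also $J=[n,b]$ contains $n$, has $\cm I+1$ elements and the same count, so
\[
M(n)\geq \frac{\cm I}{\cm I+1}A.
\]
Setting $m=\cm I\geq2$, we need
\[
\frac{m}{m-1}+\frac{m}{m+1}\geq 2,
\]
which holds by convexity of $x\mapsto 1/x$: indeed $\frac{m}{m-1}+\frac{m}{m+1}=\frac{2m^2}{m^2-1}\geq2$.

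\emph{Case $b=n+1$.} This is symmetric to the previous case.

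\emph{Case $a\leq n$ and $b\geq n+2$.} Then $I$ contains both $n$ and $n+2$, so $M(n)\geq A$ and $M(n+2)\geq A$ directly. This gives $M(n)+M(n+2)\geq 2A$.

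Letting $A\to M(n+1)$ yields $M''(n)\geq0$. The only point needing care is the near-maximizer limit, and the endpoint case, where removing $n+1$ (which is not in $S$) gains just enough to compensate the loss from adding $n$. The hypothesis $n+1\notin S$ is used exactly there, since it guarantees the count of points of $S$ is unchanged when $n+1$ is removed.
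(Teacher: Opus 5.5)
Your proof is correct and is essentially the paper's argument. In both, the key step is that for an interval $[n+1,b]$ (or its mirror image) the averages over $[n+2,b]$ and $[n,b]$ together dominate twice its average, by convexity of $x\mapsto 1/x$ and $\chi_S(n+1)=0$; intervals containing both $n$ and $n+2$ are trivial. The paper first reduces to the case $\MZu\chi_S(n)<\MZu\chi_S(n+1)$, which forces near-maximizing intervals to start at $n+1$, whereas you split directly on where $n+1$ sits in the near-maximizing interval. One small slip: $[n,b]$ contains \emph{at least}, not exactly, as many points of $S$ as $I$, which is all you need.
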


\zcref[S]{lem: convexity} is equivalent to the following \zcref[noref,nocap]{cor: convexity}:
\begin{corollary}\label{cor: convexity}
Let $S\subset\Z$.
\begin{enumerate}
\item
If $I$ is a connected component of $S$ then $\MZu\chi_S$ is concave on $(I-1)\cup I\cup(I+1)$.
\item
If $I$ is a connected component of $\Z\setminus S$ then $\MZu\chi_S$ is convex on $(I-1)\cup I\cup(I+1)$.
\end{enumerate}
\end{corollary}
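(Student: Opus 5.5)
We derive the corollary directly from \zcref{lem: convexity}, using the standard fact that a function $g$ on a discrete interval $J\subseteq\Z$ is concave (resp.\ convex) on $J$ if and only if its successive slopes $g'(m)=g(m+1)-g(m)$, for $m,m+1\in J$, are nonincreasing (resp.\ nondecreasing). Equivalently, $g''(n)=g'(n+1)-g'(n)\leq0$ (resp.\ $\geq0$) for every $n$ with $n,n+1,n+2\in J$, that is, for every $n$ such that $n+1$ is an interior point of $J$. This fact follows by telescoping the second differences and writing any point of $J$ between two others as a convex combination of them. We apply it with $g=\MZu\chi_S$.

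For (i), let $I$ be a connected component of $S$; it is a nonempty discrete interval, possibly unbounded on one or both sides. Then $J:=(I-1)\cup I\cup(I+1)$ is again a discrete interval, obtained from $I$ by adding one point at each finite endpoint. The key observation is that the interior points of $J$ are exactly the points of $I$. Indeed, if $I=\{a,\dots,b\}$ then $J=\{a-1,\dots,b+1\}$, whose interior is $\{a,\dots,b\}$. The unbounded cases are identical, since the corresponding ends simply never terminate. Hence every $n$ with $n,n+1,n+2\in J$ satisfies $n+1\in I\subseteq S$. By \zcref{lem: convexity}\zcref{it:concavity} this gives $(\MZu\chi_S)''(n)\leq0$, so $\MZu\chi_S$ is concave on $J$.

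Part (ii) is proved the same way, with $I$ a component of $\Z\setminus S$ and \zcref{lem: convexity}\zcref{it:convexity} in place of part (i) of the lemma. The same argument also gives the converse equivalence. If $n+1\in S$, let $I$ be the component of $S$ containing $n+1$. Then $n,n+1,n+2\in J$, so concavity on $J$ forces $(\MZu\chi_S)''(n)\leq0$; the convex case is analogous.

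There is no real obstacle here. The only point needing care is the bookkeeping that identifies the interior of $J$ with $I$, including the degenerate cases where $I$ is a single point or a half-line, and where $S=\Z$ or $S=\emptyset$ so that $I=\Z$. In all of these the identification still holds.
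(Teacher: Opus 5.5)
Your proof is correct and takes essentially the paper's route. The paper simply asserts that the convexity lemma is equivalent to this corollary. Your observation that the interior points of $(I-1)\cup I\cup(I+1)$ are exactly the points of $I$, combined with the characterization of discrete concavity/convexity by the sign of second differences at interior points, supplies precisely the bookkeeping the paper leaves implicit.
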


\begin{proof}[Proof of \zcref{lem: convexity}]
Recall
\[(\MZu\chi_S)''(n) = \MZu\chi_S(n) + \MZu\chi_S(n+2) - 2\MZu\chi_S(n+1).\]
Let $n+1\in S$.
Then \zcref{it:concavity} follows from $\MZu\chi_S(n+1)=1$ and \(\max\{\MZu\chi_S(n),\MZu\chi_S(n+2)\}\leq1\).

It remains to consider $n+1\in\mathbb{Z}\setminus S$.
If $\MZu \chi_S(n+1)\leq \min\{\MZu \chi_S(n),\MZu \chi_S(n+2)\}$ then \zcref{it:convexity} follows similarly to the previous argument.
Thus, it remains to consider $\MZu \chi_S(n+1)>\min\{\MZu \chi_S(n),\MZu \chi_S(n+2)\}$, and by symmetry it suffices to consider $\MZu \chi_S(n)<\MZu \chi_S(n+1)$.
This means in the supremum defining $\MZu \chi_S(n+1)$ it suffices to consider intervals to the right:
\[\MZu \chi_S(n+1)=\sup_{r\geq 0}\av{\Z}{[n+1,n+1+r]} \chi_S.\]
For each $r\geq 0$, we have $\tfrac{1}{r+1}\le \tfrac12(\tfrac1r+\tfrac1{r+2})$, and thus using $\chi_S(n+1)=0$, we have
\begin{equation*}
\begin{split}
\av{\Z}{[n+1,n+1+r]}\chi_S&=\frac{r}{r+1}\sum_{k=1}^{r}\chi_S(n+1+k)\\
&\leq\frac{1}{2}\left(\frac{1}{r}\sum_{k=1}^{r}\chi_S(n+1+k)+\frac{1}{r+2}\sum_{k=1}^{r}\chi_S(n+1+k)\right)\\
&\leq\frac{1}{2}\left(\av{\Z}{[n+2,n+r+1]}\chi_S+\av{\Z}{[n,n+r+1]}\chi_S\right).
\end{split}
\end{equation*}
Taking the supremum over $r\geq 0$ gives 
\[\MZu\chi_S(n+1)\leq\sup_{r\geq0}\frac{1}{2}\left(\av{\Z}{[n+2,n+r+1]}\chi_S+\av{\Z}{[n,n+r+1]}\chi_S\right)\leq\frac{1}{2}\MZu\chi_S(n)+\frac{1}{2}\MZu\chi_S(n+2)\]
and thus $(\MZu \chi_S)''(n)\geq 0$.
\end{proof}

\begin{lemma}\label{lem: good balls}
Let $f:\Z \to[0,\infty)$ in $\ell^{\infty}(\Z)$.
If for all bounded intervals $I\ni n$ we have \(\MZu f(n)>\av {\mathbb{Z}}If\) then $n$ has to be a global minimum of $\MZu f$.

\end{lemma}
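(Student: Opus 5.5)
The idea is that if the supremum defining $\MZu f(n)$ is not attained by any bounded interval, then it can only be approached by intervals whose length tends to infinity. Such very long intervals can be stretched to reach any other point $m$ at a negligible cost in the average, which forces $\MZu f(m)\geq\MZu f(n)$.

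\emph{Step 1: reduce to long intervals.} Since $f\in\ell^\infty(\Z)$, we have $\MZu f(n)\leq\|f\|_{\ell^\infty(\Z)}<\infty$. For every $L\in\N$ there are only finitely many intervals $I\ni n$ with $\cm I\leq L$. So the maximum of $\av\Z If$ over them is attained by one of them, and by hypothesis it is strictly smaller than $\MZu f(n)$. Hence any sequence of intervals $I_j\ni n$ with $\av\Z{I_j}f\to\MZu f(n)$ must have $\cm{I_j}\to\infty$, and such a sequence exists by definition of the supremum.

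\emph{Step 2: compare with an arbitrary point $m\in\Z$.} Let $J_j$ be the smallest interval containing both $I_j$ and $m$. Then $m\in J_j$ and $\cm{J_j}\leq\cm{I_j}+\lm{m-n}$. Since $f\geq0$, dropping the terms outside $I_j$ only decreases the sum, so
\[
\MZu f(m)\geq\av\Z{J_j}f\geq\frac{\cm{I_j}}{\cm{I_j}+\lm{m-n}}\,\av\Z{I_j}f .
\]
Letting $j\to\infty$, the prefactor tends to $1$ because $\cm{I_j}\to\infty$, and $\av\Z{I_j}f\to\MZu f(n)$. This gives $\MZu f(m)\geq\MZu f(n)$ for every $m\in\Z$, so $n$ is a global minimum of $\MZu f$.

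The only point needing care is Step 1, namely the finiteness argument showing that a non-attained supremum forces $\cm{I_j}\to\infty$. Boundedness of $f$ guarantees the supremum is finite, and nonnegativity of $f$ is what makes Step 2 work.
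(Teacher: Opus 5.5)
Your proof is correct and follows the same strategy as the paper: approximate the supremum by intervals $I_j\ni n$ whose length must tend to infinity, then move to an arbitrary $m$ at a cost of order $\lm{m-n}/\cm{I_j}$. The only difference is in the last step. The paper translates $[n-r_j,n+s_j]$ to $[m-r_j,m+s_j]$ and controls the additive error by $2\|f\|_{\ell^\infty(\Z)}\lm{n-m}/(1+r_j+s_j)$, whereas you enlarge $I_j$ to contain $m$ and get a multiplicative loss using only $f\geq0$; your Step 1 also makes explicit why $\cm{I_j}\to\infty$, which the paper merely asserts.
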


\begin{proof} 
By assumption for $i=1,2,\ldots$ there must exist $r_i,s_i \in \N_0$ with $(r_i+s_i) \to \infty$ and $\av{\mathbb{Z}}{[n-r_i,n+s_i]}f\to \MZu f(n)$ as $i \to \infty$.
That means for any $m \in \Z$ we have
\[
\av{\mathbb{Z}}{[m-r_j,m+s_j]}f\geq \av{\mathbb{Z}}{[n-r_j,n+s_j]}f- \frac{2 \|f\|_{\ell^\infty(\Z)} |n-m|}{(1+r_j + s_j)},
\]
and letting $i\rightarrow\infty$ we can conclude $\MZu f(m)\geq\MZu f(n)$.

\end{proof}
\begin{remark}\label{obs:about_polynomial}
Let $f:\mathbb{Z}\rightarrow\mathbb{R}$.
\begin{enumerate}
\item \label{obs: derivative is polynomial}
If $f'$ is a polynomial of order $k$ then $f$ is a polynomial of order $k+1$ \cite[Section 2.6]{GrahamKnuthPatashnik1994ConcreteMathematics2e}.
\item \label{obs: k-th derivative vanishes}
Let $N\in\Z$ and $k\in\N$.
Then $f^{(k)}$ vanishes on $\mathbb{Z}\cap [N,\infty)$ if and only if $f$ agrees with a polynomial of degree at most $k-1$ on $\mathbb{Z}\cap[N,\infty)$.

In particular, if $f=\chi_S$ for some $S\subset\Z$ then $f^{(k)}$ vanishes on $[N,\infty)$ if and only if one of $S$ and $\mathbb{Z}\setminus S$ contains $[N,\infty)$.
\end{enumerate}
\end{remark}

\begin{lemma}
\label{lem:c1pboundedsuffices}
Let $1\leq p<\infty$ and $k\geq0$.
Then
\[
C_{k,p}
=
\sup\left\{
\frac{{\|{(\MZu\chi_S)}^{(k)}\|}_{\ell^p(\Z)}}{{\|\chi_S^{(k)}\|}_{\ell^p(\Z)}}
:
0<\cm{S}<\infty\right\},
\]
i.e.\ it suffices to consider nonempty bounded subsets $S\subseteq\mathbb{Z}$ in \zcref{eq: goal}. 
\end{lemma}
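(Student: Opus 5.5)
The plan is to show that every admissible $S$ that is not bounded contributes nothing to the supremum defining $C_{k,p}$. Such an $S$ will either have $\|\chi_S^{(k)}\|_{\ell^p(\Z)}=\infty$, in which case it is excluded, or satisfy $(\MZu\chi_S)^{(k)}\equiv0$, so its ratio is $0$. Since nonempty bounded sets are admissible, the supremum over them is trivially at most $C_{k,p}$. Hence only the inequality $C_{k,p}\leq\sup\{\dots\}$ needs an argument.

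\textbf{Step 1: the $k$th derivative has finite support.} Let $S\subseteq\Z$ with $\|\chi_S^{(k)}\|_{\ell^p(\Z)}<\infty$. Since $\chi_S$ is integer valued, so is $\chi_S^{(k)}$, and every nonzero value has absolute value at least $1$. Because $p<\infty$, finiteness of the $\ell^p$ norm therefore forces $\chi_S^{(k)}$ to vanish outside some finite window $[-N,N]$.

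For $k=0$ this says $S$ is bounded, and $S=\emptyset$ gives the harmless trivial case. For $k\geq1$ we apply \zcref{obs: k-th derivative vanishes} of \zcref{obs:about_polynomial}. Applied on $[N,\infty)$, it shows that one of $S$ and $\Z\setminus S$ contains $[N,\infty)$. The mirrored statement, obtained by applying the remark to $n\mapsto\chi_S(-n)$ (the $k$th derivative of the reflection is, up to sign and shift, a reflection of $\chi_S^{(k)}$), shows that one of $S$ and $\Z\setminus S$ contains $(-\infty,-N]$.

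\textbf{Step 2: case analysis for $k\geq1$.} If $\Z\setminus S$ contains both tails, then $S$ is bounded. If $S$ is moreover nonempty we are in the bounded class; if $S=\emptyset$, both sides of \zcref{eq: goal} vanish.

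Otherwise $S$ contains at least one tail, say $[N,\infty)$; the case $(-\infty,-N]$ is symmetric. Fix $n\in\Z$. For $R>\max(N,n)$, the interval $[n,R]$ contains $n$, and at most $N-n$ of its points lie outside $S$, so
\[
\av\Z{[n,R]}\chi_S\geq1-\frac{|N-n|}{R-n+1}\to1\quad\text{as }R\to\infty.
\]
Since also $\MZu\chi_S\leq1$, we get $\MZu\chi_S\equiv1$. Therefore $(\MZu\chi_S)^{(k)}\equiv0$ for $k\geq1$, and such $S$ give ratio $0$ (or the $0/0$ case, which imposes no constraint in \zcref{eq: goal}).

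\textbf{Conclusion and main obstacle.} Every $S$ that imposes a nontrivial constraint in \zcref{eq: goal} is nonempty and bounded, which proves the stated equality. The only delicate point is Step 1: we must invoke integrality together with $p<\infty$ to get finite support, and then correctly transfer \zcref{obs:about_polynomial} to the left tail by reflection. This is exactly why the lemma excludes $p=\infty$.
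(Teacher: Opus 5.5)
Your proof is correct and follows essentially the same route as the paper: integrality of $\chi_S^{(k)}$ together with $p<\infty$ gives finite support, the polynomial remark forces $\chi_S$ to be constant on both tails, and a tail contained in $S$ forces $\MZu\chi_S\equiv1$. You also supply the reflection and averaging details that the paper only asserts. The one point you skip, which the paper states explicitly, is that for nonempty bounded $S$ the same remark gives $\|\chi_S^{(k)}\|_{\ell^p(\Z)}>0$, so the ratios in the supremum are well defined.
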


\begin{proof}
First of all, if \(0<\cm{S}<\infty\) then by \zcref{obs:about_polynomial}~\zcref{obs: k-th derivative vanishes} we may divide \zcref{eq: goal} by \({\|\chi_S^{(k)}\|}_{\ell^p(\Z)}>0\) and the above supremum on the right hand side is well defined and bounded by $C_{k,p}$.

For the reverse inequality, if $\cm S=0$ or ${\|\chi_S^{(k)}\|}_{\ell^p(\Z)}=\infty$ then \zcref{eq: goal} holds for any value of $C_{k,p}>0$, which means we do not need to consider these cases.

If ${\|\chi_S^{(k)}\|}_{\ell^p(\Z)}<\infty$ then since \(\chi_S^{(k)}(n)\in\mathbb{Z}\) exists $N>0$ such that $\chi_S^{(k)}(n)=0$ for all $|n|>N$.
By \zcref{obs:about_polynomial}~\zcref{obs: k-th derivative vanishes} this means there exist $a,b\in\{0,1\}$ such that $\chi_S=a$ is constant on $[N,\infty)$ and similarly $\chi_S=b$ on $(-\infty,-N-k]$.
If $1\in\{a,b\}$ then $\MZu\chi_S=1$ everywhere, which means ${\|{(\MZu\chi_S)}^{(k)}\|}_{\ell^p(\Z)}=0$ so that \zcref{eq: goal} holds for any value of $C_{k,p}$.
We can conclude that it suffices to consider nonempty bounded $\cm S<\infty$ in the definition of $C_{k,p}$.
\end{proof}

\subsection{The discrete circle}
\label{subsec:basicnotions}
While the main setting we consider is the discrete line, $\Z$, we will use its connection to the discrete circle throughout the manuscript.
\begin{definition}
For a fixed positive integer $N$, define the discrete circle of order $N$ 
\[\mathbb{Z}_N:=\{0,1,\dotsc,N-1\}\]
equipped with the counting measure. For $n\in\Z$ we define
\[\cmod nN:=\min\{n+kN\geq0:k\in\mathbb{Z}\}.\]
Note that $\cmod nN\in\mathbb{Z}_N$ for all $n\in\mathbb{Z}$.
Our set of intervals $\mathcal I$ on $\Z_N$ is the set of images of intervals in $\Z$ under the map $n\mapsto\cmod nN$. Since $\mathbb{Z}_N$ has a cyclic structure, we call $\mathbb{Z}_N$ a discrete circle below. 
\end{definition}
Similar to $\MZu$, we can also define the uncentered maximal operator $\Mu{\mathbb{Z}_N}$ on $\mathbb{Z}_N$:
\begin{equation*}
\begin{split}
\Mu{\mathbb{Z}_N}f(n):=&\sup_{s,t\geq0}\frac{1}{s+t+1}\sum_{i=-s}^tf(\cmod{(n+i)}{N})\\
\end{split}
\end{equation*}
One may verify that it in fact suffices to consider $t+s+1\leq 2N$ in the above supremum.

For $k=0,1,2,\ldots$ we inductively define the $k$th derivative $f^{(k)}$ of $f:\Z_N\rightarrow\R$ by
\begin{align*}
f^{(0)}(n)
&:=
f(n)
,&
f^{(k+1)}(n)
&:=
f^{(k)}(\cmod{(n+1)}N)-f^{(k)}(n)
.
\end{align*}

Let $k\geq1$ and $N\geq1$.
Then, if $f:\Z_N\rightarrow\R$ is constant we have ${\|f^{(k)}\|}_{\ell^p(\mathbb{Z}_N)}=0$.
Next, we are going to show the reverse implication, cf.\ \zcref{obs:about_polynomial}~\zcref{obs: k-th derivative vanishes} and \zcref{lem:c1pboundedsuffices}.

\begin{lemma}\label{lem:modN_kth_derivative_non_vanishing}
Let $N\in\N$, $k\in\mathbb{N}$, $1\leq p\leq\infty$ and $f:\mathbb{Z}_N\to\mathbb{R}$ with ${\|f^{(k)}\|}_{\ell^p(\mathbb{Z}_N)}=0$.
Then $f$ is constant.
\end{lemma}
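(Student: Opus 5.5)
We argue by induction on $k$. Since $\|f^{(k)}\|_{\ell^p(\Z_N)}=0$ for any $1\leq p\leq\infty$ is equivalent to $f^{(k)}\equiv0$ on $\Z_N$ (a norm on the finite set $\Z_N$), the hypothesis simply says $f^{(k)}$ vanishes identically. So it suffices to show that if $g:\Z_N\to\R$ satisfies $g'\equiv0$ then $g$ is constant, and that if $g'$ is constant then $g'\equiv 0$.

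For the base case $k=1$: $f'(n)=f(\cmod{(n+1)}N)-f(n)=0$ for all $n\in\Z_N$ gives $f(0)=f(1)=\dotsb=f(N-1)$, so $f$ is constant. For the inductive step, suppose $f^{(k+1)}\equiv0$. Set $g:=f^{(k)}$. Then $g'\equiv0$, so by the base case $g\equiv c$ for some constant $c\in\R$. The key point, which distinguishes the circle from $\Z$ (where $f^{(k)}$ constant merely makes $f$ a polynomial, cf.\ \zcref{obs:about_polynomial}), is that a derivative on $\Z_N$ always sums to zero:
\[
\sum_{n=0}^{N-1}h'(n)=\sum_{n=0}^{N-1}\bigl(h(\cmod{(n+1)}N)-h(n)\bigr)=0
\]
by telescoping over the full cycle, for any $h:\Z_N\to\R$. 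Applying this with $h=f^{(k-1)}$ (valid since $k\geq1$), we get $Nc=\sum_n g(n)=0$, hence $c=0$ and $f^{(k)}\equiv0$. The induction hypothesis then yields that $f$ is constant.

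There is no real obstacle; the only step requiring care is the telescoping identity, which uses that $n\mapsto\cmod{(n+1)}N$ is a bijection of $\Z_N$, so that $\sum_n h(\cmod{(n+1)}N)=\sum_n h(n)$.
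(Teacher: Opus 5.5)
Your proof is correct and is essentially the paper's argument: both peel off one derivative at a time and use that a constant derivative on $\Z_N$ must vanish. The paper phrases this as $f^{(j)}(n)=an+b$ being incompatible with periodicity unless $a=0$, while you phrase it as the telescoping identity $\sum_{n\in\Z_N}h'(n)=0$; both amount to the same fact $Na=0$.
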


\begin{proof}
We will prove by induction on $l=0,1,\ldots,k$ that $f^{(k-l)}$ is constant.
In particular, this then holds for $f^{(0)}=f$. 

By assumption the induction conclusion holds for $l=0$.
So, let $0<l\leq k$ and assume there exists $a\in\mathbb{R}$ with $f^{k-(l-1)}=a$.
Then there exists $b\in\mathbb{R}$ such that for every $n\in\mathbb{N}$ we have $f^{(k-l)}(n)=an+b$.
Since $f^{(k-l)}$ is a periodic function this requires $a=0$ which means $f^{(k-l)}$ is constant.
This concludes the induction.
\end{proof}

The following \zcref[nocap,noref]{lem:discretevsperiodic} connects the setting of the discrete circle back to $\mathbb{Z}$. For $N\in\mathbb{N}$, $a\in\mathbb{N}$ and $f:\Z_N\rightarrow\mathbb{R}$ define $f_a:\mathbb{Z}\rightarrow\mathbb{R}$ by
\[
f_a(n)
=
f(\cmod nN)
\chi_{[0,aN)}(n)
.
\]

\begin{lemma}
\label{lem:discretevsperiodic}
Let $k\in\mathbb{N}$, $1\leq p\leq\infty$, $N\in\mathbb{N}$ and $f:\Z_N\rightarrow\mathbb{R}$ be nonconstant.
Then
\[
\liminf_{a\rightarrow\infty}
\frac{
\|
(\MZu f_a)^{(k)}
\|_{\ell^p(\mathbb{Z})}
}{
\|
f_a^{(k)}
\|_{\ell^p(\mathbb{Z})}
}
\geq
\frac{
\|
(\Mu{\mathbb{Z}_N}f)^{(k)}
\|_{\ell^p(\mathbb{Z}_N)}
}{
\|
f^{(k)}
\|_{\ell^p(\mathbb{Z}_N)}
}
.
\] 

\end{lemma}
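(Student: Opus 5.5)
We compare $f_a$ on $\Z$ with the periodic extension of $f$. Write $\tilde f(n)=f(\cmod nN)$, the $N$-periodic function on $\Z$. We assume $f\geq0$, as is implicit in the applications to $\chi_S$; otherwise replace $f$ by $|f|$ in the maximal function, since both sides only see averages of $|f|$.

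The approach rests on two observations.

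First, $\MZu\tilde f(n)=\Mu{\Z_N}f(\cmod nN)$ for all $n$. This holds because intervals in $\Z$ containing $n$ map to intervals of $\Z_N$ containing $\cmod nN$, and the averages agree by construction of $\Mu{\Z_N}$.

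Second, there is a constant $L$ (say $L=2N$) such that for $n\in[L,aN-L)$ we have $\MZu f_a(n)=\MZu\tilde f(n)$ up to an error that vanishes as $a\to\infty$. More precisely, I would show that the maximal function of $\tilde f$ is attained (as a supremum) over intervals of length at most $2N$, as remarked after the definition of $\Mu{\Z_N}$. Concretely, if an interval $I$ has length at least $N$, one can remove a full period from $I$, or split $I$ into full periods plus a shorter piece, without decreasing the average below $\max$ of the pieces. Hence $\MZu\tilde f(n)=\max_{I\ni n,|I|\leq 2N}\av\Z I\tilde f$.

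For $n$ at distance at least $2N$ from the boundary, every such short interval lies inside $[0,aN)$, where $f_a=\tilde f$. This gives $\MZu f_a(n)\geq\MZu\tilde f(n)$. Conversely, $f_a\leq\tilde f$ pointwise, so $\MZu f_a\leq\MZu\tilde f$. Thus $\MZu f_a=\MZu\tilde f$ exactly on $[2N,aN-2N)$.

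Consequently, $(\MZu f_a)^{(k)}(n)=(\Mu{\Z_N}f)^{(k)}(\cmod nN)$ for $n\in[2N,aN-2N-k)$. This range contains at least $a-5-\lceil k/N\rceil$ full periods, so for $p<\infty$
\[
\|(\MZu f_a)^{(k)}\|_{\ell^p(\Z)}^p\geq (a-C)\,\|(\Mu{\Z_N}f)^{(k)}\|_{\ell^p(\Z_N)}^p ,
\]
with $C$ independent of $a$.

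On the other side, $f_a^{(k)}(n)=f^{(k)}(\cmod nN)$ for $n\in[0,aN-k)$. The remaining values of $f_a^{(k)}$ lie in $[-k,aN)\setminus[0,aN-k)$, a set of at most $2k$ points, and they are bounded by $2^k\|f\|_\infty$. Hence
\[
\|f_a^{(k)}\|_{\ell^p(\Z)}^p\leq a\|f^{(k)}\|_{\ell^p(\Z_N)}^p+C' .
\]
Here $\|f^{(k)}\|_{\ell^p(\Z_N)}>0$ by \zcref{lem:modN_kth_derivative_non_vanishing}, since $f$ is nonconstant.

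Taking the ratio, raising to the power $1/p$, and letting $a\to\infty$ yields the claim for $p<\infty$. For $p=\infty$ the argument is simpler: the left norm dominates the periodic sup once a full period lies in the good range, and the right norm is bounded independently of $a$. A boundary term can increase $\|f_a^{(k)}\|_\infty$, so for $p=\infty$ one either needs a separate check or interprets the statement via the limit $p\to\infty$. I expect this to need care; if it fails, the statement may require $p<\infty$ there.

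The main obstacle is the second observation: that for the periodic function the supremum is attained on intervals of bounded length. The key step I would try is the following. For a long interval $I=[u,v]$ containing $n$ with $|I|>2N$, remove from one end a block of $N$ consecutive points not containing $n$; this is possible since $|I|>2N$. The removed block has average $\bar f$, the mean of $f$ over $\Z_N$. If $\av\Z I\tilde f\geq\bar f$, the shortened interval has average at least $\av\Z I\tilde f$. If instead $\av\Z I\tilde f<\bar f$, then the average is below $\bar f$, and $\bar f\leq\max_{x\in\Z_N}$ of the average over the length-$N$ window containing $n$. Iterating reduces to length at most $2N$.
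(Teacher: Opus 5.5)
For $1\le p<\infty$ your argument is correct and is essentially the paper's proof. The paper works on the same range $2N\le n<(a-2)N$, where $\MZu f_a(n)=\Mu{\Z_N}f(\cmod nN)$. It uses the same identity $f_a^{(k)}(n)=f^{(k)}(\cmod nN)$ for $0\le n<aN-k$, and the same observation that the remaining terms stay bounded as $a\to\infty$. Your block-removal argument proves a fact the paper only asserts: for the periodic maximal function, intervals of length at most $2N$ suffice. Together with $f_a\le\tilde f$, this gives the identity on the good range exactly.

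One correction to your preamble: the reduction to $f\ge0$ is not merely a convenience, because $\Mu{\Z_N}$ is defined without absolute values. For $N=2$ and $f=(1,-1)$ one has $\MZu f_a=\MZu\chi_{[0,2a)}$, so for $p<\infty$ the left-hand side tends to $0$. Meanwhile $\Mu{\Z_2}f=(1,\frac13)$ is nonconstant, so the right-hand side is positive.

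Your doubts about $p=\infty$ are justified, and no argument can close that case because the statement fails there. The paper dismisses it by saying everything reduces to local behaviour. But $\|f_a^{(k)}\|_{\ell^\infty(\Z)}$ also sees the boundary values, exactly the effect you pointed out, and the numerator need not grow correspondingly.

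Take $N=3$, $f=\chi_{\{0\}}$, $k=5$.
\begin{itemize}
\item On $\Z_3$: $f^{(5)}=(9,-9,0)$ and $\Mu{\Z_3}f=\frac12+\frac12f$, so the right-hand side equals $\frac12$.
\item Denominator on $\Z$: $f_a=\chi_{\{0,3,\dots,3(a-1)\}}$ and $f_a^{(5)}(-3)=\binom{5}{3}=10$. A direct check shows $10$ is the maximum of $|f_a^{(5)}|$.
\item Maximal function on $\Z$: $\MZu f_a$ equals $1$ on the support of $f_a$, $\frac12$ at the remaining points of $[-1,3a-2]$, and $\frac{a}{3a+d-2}$ at the points $-d$ and $3a-3+d$ with $d\ge2$.
\item Numerator on $\Z$: $(\MZu f_a)^{(5)}$ takes only the values $0,\pm\frac92$ on $[-1,3a-7]$. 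Its values at $n=-6,\dots,-2$ converge to $\frac16,-\frac16,-\frac32,\frac{13}{3},-\frac{13}{3}$. The values further left tend to $0$ uniformly, and the right end behaves symmetrically. Thus $\|(\MZu f_a)^{(5)}\|_{\ell^\infty(\Z)}\to\frac92$.
\end{itemize}
The left-hand side therefore equals $\frac{9}{20}<\frac12$. So the lemma must be restricted to $1\le p<\infty$ and $f\ge0$, and with these restrictions your proof is complete.
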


\begin{proof}
If $p=\infty$, everything reduces to the local behavior. Thus, the statement holds clearly.

Assume that $p<\infty$. By \zcref{lem:modN_kth_derivative_non_vanishing} we have $\|f^{(k)}\|_{\ell^p(\mathbb{Z}_N)}>0$.

In the following ranges the following equalities hold:
\begin{align*}
2N\leq n&<(a-2)N:
&
\MZu f_a(n)
&=
\Mu{\mathbb{Z}_N}f(\cmod nN)
,\\
0\leq n&<aN-k:
&
f_a^{(k)}(n)
&=
f^{(k)}(\cmod nN)
,\\
2N\leq n&<(a-2)N-k:
&
(\MZu f_a)^{(k)}(n)
&=
(\Mu{\mathbb{Z}_N}f)^{(k)}(\cmod nN)
.
\end{align*}
For $a\geq k/N$ we have
\begin{align*}
\lfloor a-k/N\rfloor{\|f^{(k)}\|}_{\ell^p(\mathbb{Z}_N)}^p
&=
\|f_a^{(k)}\|_{\ell^p([0,N\lfloor a-k/N\rfloor-1])}^p
\leq
\|f_a^{(k)}\|_{\ell^p([0,aN-k-1])}^p
\leq
\|f_a^{(k)}\|_{\ell^p([0,aN-1])}^p
\\
&=
a{\|f^{(k)}\|}_{\ell^p(\mathbb{Z}_N)}^p.
\end{align*}
and thus,
\[
\lim_{a\to\infty}\frac{\|f_a^{(k)}\|_{\ell^p([0,aN-k-1])}^p}{a\|f^{(k)}\|_{\ell^p(\mathbb{Z}_N)}^p}
=1.
\]
Since $f$ is bounded and $f_a^{(k)}=0$ on $\Z\setminus(-k,aN)$, we have
\[
\limsup_{a\rightarrow\infty}
\|f_a^{(k)}\|_{\ell^p(\mathbb{Z}\setminus[0,aN-k-1])}^p
<\infty.
\]
Therefore,
\[
\lim_{a\to\infty}
\frac{
\|f_a^{(k)}\|_{\ell^p(\mathbb{Z})}^p
}{
a\|f^{(k)}\|_{\ell^p(\mathbb{Z}_N)}^p
}
=1
.
\]
For the maximal function one can similarly show
\begin{align*}
\liminf_{a\rightarrow\infty}
\frac{
\|(\Mu{\mathbb{Z}}f_a)^{(k)}\|_{\ell^p([2N,(a-2)N-k-1])}^p
}{
\|(\Mu{\mathbb{Z}_N}f)^{(k)}\|_{\ell^p(\mathbb{Z}_N)}^p
}
&\geq1
,&
\limsup_{a\rightarrow\infty}
\|(\Mu{\Z}f_a)^{(k)}\|_{\ell^p(\mathbb{Z}\setminus[2N,(a-2)N-k-1])}^p
&<\infty
\end{align*}
which implies
\[
\liminf_{a\to\infty}
\frac{
\|\MZu f_a^{(k)}\|_{\ell^p(\mathbb{Z})}^p
}{
a\|(\Mu{\mathbb{Z}_N}f)^{(k)}\|_{\ell^p(\mathbb{Z}_N)}^p
}
\geq1
,
\]
finishing the proof.
\end{proof}

\section{Proof of the Main Results}

Our results \zcref{thm:c0p,thm:c1p,thm:czu1p_c1p,thm:c2p,thm:ckp,thm:ckpgeq} can be divided into two groups:
In \zcref{thm:c0p,thm:c1p,thm:c2p} the main focus is on upper bounds, while in \zcref{thm:czu1p_c1p,thm:ckp,thm:ckpgeq} it is on lower bounds.
We will first prove the upper and then the lower bounds.

\subsection{Proof of \texorpdfstring{\zcref{thm:c0p}}{\ref{thm:c0p}}}\label{sec:proof:it:main theorem c0p}
This result and proof are along the same lines as \zcref{operator_norm_of_uncentered_maximal_operator_in_continuous_setting}, the corresponding result for $f\in L^p(\R)$ from \cite{GM3}.

\begin{proof}[Proof of \zcref{thm:c0p}]
For a set of intervals $\mathcal I$, we denote by $\bigcup\mathcal I$ the union of all intervals in $\mathcal I$.
For $\lambda>0$, let $\mathcal I_\lambda$ be a finite set of open intervals $I$ with $\lm{I\cap S}\geq\lambda\lm{I}$.
Following the argument in \cite[Lemma~4.4]{G}, there exists a subcollection \(\{I_1,I_2,\dotsc,I_n\}=\mathcal{I}_\lambda'\subset\mathcal{I}_\lambda\) with $\bigcup\mathcal{I}_\lambda'=\bigcup\mathcal{I}_\lambda$
and such that 
with $I_i=(a_i,b_i)$ we have
\(
b_{i-1}
<b_{i-1}
<a_{i+1}
<a_{i+2}
.
\)
Define
\begin{align*}
\mathcal{I}_\lambda^0
&:=
\{I_i\in\mathcal{I}_\lambda': i\text{ is even}\}
,&
\mathcal{I}_\lambda^1
&:=
\{I_i\in\mathcal{I}_\lambda': i\text{ is odd}\}
.
\end{align*}
Then, for each $i=0,1$ the intervals in $\mathcal{I}_\lambda^i$ are pairwise disjoint.

Since
\[
\left|\bigcup\mathcal{I}_\lambda^i\right|
=
\sum_{I\in\mathcal{I}_\lambda^i}\lm{I}
\leq
\sum_{I\in\mathcal{I}_\lambda^i}\lm{I\cap S}/\lambda
=
\lm{S\cap\left(\bigcup\mathcal{I}_\lambda^i\right)}/\lambda
\]
we have
\begin{align*}
\lm{\bigcup\mathcal{I}_\lambda}&=\lm{
\bigcup\mathcal{I}_\lambda^0}+\lm{\bigcup\mathcal{I}_\lambda^1}
-
\lm{
\left(\bigcup\mathcal{I}_\lambda^{0}\right)
\cap
\left(\bigcup\mathcal{I}_\lambda^{1}\right)
}
\\
&\leq
\lm{
S\cap\bigcup\mathcal{I}_\lambda^{0}
}
/\lambda
+
\lm{
S\cap\bigcup\mathcal{I}_\lambda^{1}
}
/\lambda
-
\lm{
S\cap
\bigcup\mathcal{I}_\lambda^{0}
\cap
\bigcup\mathcal{I}_\lambda^{1}
}
.
\end{align*}
Since $\lambda\leq1$, this last inequality is maximized if $S\subset\bigcup\mathcal{I}_\lambda^{i}$ for $i=0,1$ in which case it equals $(2/\lambda-1)\lm{S}$.

By the Lindelöf property, there is a countable set of intervals $I$ with $\lm{I\cap S}>\lambda$ whose union equals $\{\MZu\chi_S>\lambda\}$.
This countable union can, in turn, be exhausted by an increasing sequence of finite unions of intervals. In conclusion, by the previous display, and since $\|\chi_S\|_{\ell^p(\Z)}^p=\cm S$, for all $0<\lambda\leq 1$ we obtain
\begin{equation}
\label{superlevelestimate}
\frac{
\cm{\{\MZu\chi_S>\lambda\}}
}{
\|\chi_S\|_{\ell^p(\Z)}^p
}
\leq
\frac2\lambda-1
.
\end{equation}
Note that with $S_N=[0,N-1]$, for any $0<\lambda<1$ we have
\[
\{\M\chi_{S_N}>\lambda\}
=
\left[\lceil N-N/\lambda\rceil,\lfloor N/\lambda\rfloor\right]
.
\]
This means for $S=S_N$ \zcref{superlevelestimate} turns into an equality as $N\rightarrow\infty$.

In particular,
\[
\frac{
\lambda\cm{\{\MZu\chi_S>\lambda\}}
}{
\|\chi_S\|_{\ell^p(\Z)}^p
}
\leq
2-\lambda
\leq
2
,
\]
and the inequalities become equalities for $S=\{0\}$ as $\lambda\rightarrow0$.
This implies \(C_{0,p,\infty}=2^{1/p}\).

To compute $C_{0,p}$, by the layer cake formula and \zcref{superlevelestimate},
\[
\frac{
\|\MZu\chi_S\|_{\ell^p(\Z)}^p
}{
\|\chi_S\|_{\ell^p(\Z)}^p
}
=
\int_0^1
p\lambda^{p-1}
\frac{
\cm{\{\MZu\chi_S>\lambda\}}
}{
\|\chi_S\|_{\ell^p(\Z)}^p
}
\intd\lambda
\leq
\int_0^1
p\lambda^{p-1}
\left(\frac2\lambda-1\right)
\intd\lambda
=
\frac{2p}{p-1}-1
=
\frac{p+1}{p-1}
.
\]
Again, inserting $S=S_N$ as above and letting $N\rightarrow\infty$, by Fatou's lemma, the above inequality turns into an equality, giving $C_{0,p}=\left(\frac{p+1}{p-1}\right)^{1/p}$.
\end{proof}

\subsection{Proof of \texorpdfstring{\zcref{thm:c1p}}{\ref{thm:c1p}}}\label{sec:proof:it:main theorem c1p}

Recall that for $1\leq p<\infty$, with
\[\kappa_p:=\sum_{n=1}^\infty {\left(\frac{1}{n}-\frac{1}{n+1}\right)}^p\]
we have to prove
\[
\sum_{n\in\Z}
|{(\MZu\chi_S)}'(n)|^p
\leq
\kappa_p
\sum_{n\in\Z}
|\chi_S'(n)|^p
\]
for all $S\subset\Z$.
We first prove the following local bounds:


\begin{lemma}
\label{lem:firstderivativeoninterval}
Let $S\subset\Z$ and $a<b$ with $S\cap[a,b]=\{a\}$ and assume $\MZu\chi_S$ is non-increasing on $[a,b]$.
Then
\[
\sum_{n=a}^{b-1}
|{(\MZu\chi_S)}'(n)|^p
\leq
\kappa_p
=
\kappa_p
|\chi_S'(a)|^p
.
\]
\end{lemma}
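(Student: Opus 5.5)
We write $M:=\MZu\chi_S$ and $m_j:=M(a+j)$ for $j=0,\dots,L:=b-a$. Since $a\in S$ we have $m_0=1$, and the hypothesis gives $1=m_0\geq m_1\geq\dots\geq m_L\geq0$. Hence the left-hand side equals $\sum_{j=0}^{L-1}d_j^p$ with $d_j:=m_j-m_{j+1}\geq0$, and $\sum_j d_j\leq 1$. The target sequence is $t_j:=\frac1{j+1}-\frac1{j+2}$, with $\sum_{j\ge0}t_j^p=\kappa_p$. Our plan is to show that $(d_j)$ is weakly majorized by $(t_j)$ and then to apply Karamata's inequality (\zcref{Karamata}, in the infinite form of \zcref{infinite_Karamata}) to the convex function $g(x)=|x|^p$. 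Since $\chi_S'(a)=-1$ (because $a+1\notin S$, as $b>a$) or $|\chi_S'(a)|=1$ in any case, the right-hand side is $\kappa_p$.

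\textbf{Key lower bound.} For $1\le j\le L$ we take the interval $[a,a+j]\ni a+j$, which contains $a\in S$. This gives $m_j\geq\frac1{j+1}$, and therefore
\[
d_0+\dots+d_{j-1}=1-m_j\leq 1-\frac1{j+1}=t_0+\dots+t_{j-1}.
\]
Thus partial sums of $(d_j)$ taken in index order are dominated by those of $(t_j)$. Karamata needs the sequences in decreasing order, so the next step is to show that $(d_j)$ is nonincreasing, i.e.\ that $M$ is convex on $[a,b]$. This follows from \zcref{cor: convexity}: the points $a+1,\dots,b$ lie in a connected component $I$ of $\Z\setminus S$, and $M$ is convex on $(I-1)\cup I\cup(I+1)\supseteq[a,b]$. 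Hence $d_0\geq d_1\geq\dots$, and $(t_j)$ is clearly decreasing.

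\textbf{Applying Karamata.} We pad $(d_j)$ with zeros beyond $L-1$. The partial sums then satisfy $\sum_{j<i}d_j\le\sum_{j<i}t_j$ for all $i$, with the padded inequality following from $1-m_L\le 1-\frac1{L+1}\le$ the later partial sums of $t$. The inequality goes the wrong way for Karamata as stated, since we need $t$ to majorize $d$ with equal total sum. To get equality of totals, we append to $d$ a tail so that the total becomes $\sum t_j=1$, which is possible since $\sum d_j=1-m_L\le1$. Concretely, we replace the sequence by $d_0,\dots,d_{L-1}$ followed by $t_L',t_{L+1}',\dots$, where the tail is chosen as a decreasing sequence of $t$-type with total $m_L$ that keeps the whole sequence decreasing and majorized. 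A simpler alternative is the weak-majorization version of Karamata for increasing convex $g$ on $[0,\infty)$: if $x,y$ are decreasing, nonnegative, and $\sum_{j<i}x_j\le\sum_{j<i}y_j$ for all $i$, then $\sum g(x_j)\le\sum g(y_j)$. This version follows from \zcref{Karamata} by the standard reduction: increase the last nonzero entry of $x$, or append a small mass at the end, until the total sums match; this only increases $\sum g(x_j)$, since $g$ is increasing, while preserving decreasing order and majorization.

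\textbf{Main obstacle.} The step we expect to be most delicate is this last reduction, i.e.\ setting up the infinite-sequence Karamata with matching totals while keeping monotonicity. We would handle it by truncation: work with finitely many terms $d=1\ldots L$ against $t_0,\dots,t_{L-1}$ plus one extra term $y_L:=\sum_{j\ge L}t_j=\frac1{L+1}$. This extra term may break the decreasing order, so we would instead compare directly using the convexity inequality $g(x)\le$ sums over tail. If that becomes messy, we fall back to the elementary fact that for decreasing nonnegative sequences with $x\prec_w y$ and $p\ge1$, $\sum x_j^p\le\sum y_j^p$. This fact is proved by Abel summation: write $x_j^p-y_j^p\le p\,x_j^{p-1}(x_j-y_j)$, sum by parts using the decreasing weights $x_j^{p-1}$ and the partial-sum inequalities, and finish since the resulting boundary term is nonpositive.
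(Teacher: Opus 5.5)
Your proof is correct if you commit to the Abel-summation route. It shares the paper's skeleton: you compare the decrements $d_j=M(a+j)-M(a+j+1)$ with $t_j=\frac1{j+1}-\frac1{j+2}$, using $M(a+j)\geq\frac1{j+1}$ from the interval $[a,a+j]$. You also use that $(d_j)$ is nonincreasing by the convexity corollary, a point the paper leaves implicit and which you rightly make explicit.

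The only real difference is how you handle $\sum_j d_j=1-M(b)<1=\sum_j t_j$. The paper shrinks the majorant: it uses $s_j=\bigl(\frac1{j+1}-\max\{\frac1{j+2},M(b)\}\bigr)_+$. This sequence is decreasing and satisfies $s_j\leq t_j$, so $\sum_j s_j^p\leq\kappa_p$. Its partial sums are $1-\max\{\frac1{k+1},M(b)\}\geq 1-M(a+k)$, which is where the hypothesis $M(a+k)\geq M(b)$ enters. Its total is exactly $1-M(b)$, so Karamata applies verbatim to two finite sequences of length $b-a$.

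Your Abel summation instead proves the weak-majorization inequality directly. With $w_j=p\,d_j^{p-1}$ nonincreasing and $D_i=\sum_{j<i}(d_j-t_j)\leq0$, you get $\sum_{j<K}(d_j^p-t_j^p)\leq w_{K-1}D_K+\sum_{j=0}^{K-2}(w_j-w_{j+1})D_{j+1}\leq0$. This is self-contained, bypasses Karamata and its infinite version entirely, and does not even need $(t_j)$ to be decreasing.

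You should, however, discard the other reductions you sketch. Raising the last nonzero $d_j$, or appending mass at the end, cannot preserve the partial-sum condition: every partial sum $1-\frac1{i+1}$ of $(t_j)$ is strictly below $1$. An infinite tail, in turn, can destroy monotonicity. For example, with $S=\{0,5\}$ and $[a,b]=[0,3]$ one has $M=(1,\frac12,\frac13,\frac13)$ there, so $d_2=0$ while the missing mass is $\frac13$.
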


\begin{proof}
Define two finite sequences ${\{s_n\}}_{n=1}^{N_s},{\{t_n\}}_{n=1}^{N_t}$ by
\begin{align*}
s_n
&=
\begin{cases}
\frac{1}{n}-\frac{1}{n+1}&\text{if }\frac{1}{n+1}>\MZu\chi_S(b)\\
\frac{1}{n}-\MZu\chi_S(b)&\text{if }\frac{1}{n+1}\leq \MZu\chi_S(b)<\frac{1}{n}\\
0&\text{if }n\leq b-a\text{ and }\frac{1}{n}\leq \MZu\chi_S(b)
,
\end{cases}
\\
t_n
&=
\MZu\chi_S(a+n-1)-\MZu\chi_S(a+n)
\end{align*}
if $a+n\leq b$. Then, we have for any $k$ with $k<b-a$, we have
\[\sum_{n=1}^ks_n=1-\frac{1}{k+1}\geq1-\MZu\chi_S(a+k)=\sum_{n=1}^kt_n\]
and 
\[\sum_ns_n=1-\MZu\chi_S(b)=\sum_nt_n.\]
Therefore, by \zcref{Karamata} (Karamata's inequality), we can conclude
\[
\sum_{n=a}^{b-1}{|{(\MZu\chi_S)}'(n)|}^p\leq\kappa_p
.
\]
\end{proof}
By symmetry, we also have the following corollary:
\begin{corollary}\label{cor:firstderivativeoninterval}
Let $S\subset\Z$ and $a<b$ with $S\cap[a,b]=\{b\}$ and assume $\MZu\chi_S$ is non-decreasing on $[a,b]$.
Then
\[
\sum_{n=a}^{b-1}
|{(\MZu\chi_S)}'(n)|^p
\leq
\kappa_p
=
\kappa_p
|\chi_S'(b-1)|^p
.
\]
\end{corollary}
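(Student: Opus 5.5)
The corollary follows from \zcref{lem:firstderivativeoninterval} by the reflection $n\mapsto -n$. Define $\tilde S:=-S$, $\tilde a:=-b$ and $\tilde b:=-a$, so that $\tilde a<\tilde b$ and $\tilde S\cap[\tilde a,\tilde b]=-(S\cap[a,b])=\{-b\}=\{\tilde a\}$.

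The key observation is that $\MZu$ commutes with reflection:
\[
\MZu\chi_{\tilde S}(n)=\MZu\chi_S(-n)\qquad\text{for all }n\in\Z .
\]
This holds because $I\mapsto -I$ is a bijection between the discrete intervals containing $-n$ and those containing $n$, and it preserves both the cardinality of $I$ and the count $\cm{I\cap S}$. Consequently, since $\MZu\chi_S$ is non-decreasing on $[a,b]$, the function $\MZu\chi_{\tilde S}$ is non-increasing on $[\tilde a,\tilde b]$. The hypotheses of \zcref{lem:firstderivativeoninterval} are therefore satisfied for $\tilde S,\tilde a,\tilde b$.

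Next we translate the derivatives. For $g(n):=\MZu\chi_S(-n)$ we have
\[
g'(m)=\MZu\chi_S(-m-1)-\MZu\chi_S(-m)=-{(\MZu\chi_S)}'(-m-1).
\]
As $m$ runs over $\tilde a,\dots,\tilde b-1=-b,\dots,-a-1$, the index $n=-m-1$ runs over $b-1,\dots,a$. Hence
\[
\sum_{n=a}^{b-1}|{(\MZu\chi_S)}'(n)|^p=\sum_{m=\tilde a}^{\tilde b-1}|{(\MZu\chi_{\tilde S})}'(m)|^p\leq\kappa_p ,
\]
where the inequality is \zcref{lem:firstderivativeoninterval}.

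Finally we verify the equality $\kappa_p=\kappa_p|\chi_S'(b-1)|^p$. By hypothesis $b\in S$ and $b-1\notin S$, the latter because $a\le b-1<b$ and $S\cap[a,b]=\{b\}$. Thus $\chi_S'(b-1)=\chi_S(b)-\chi_S(b-1)=1$.

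The only step requiring any care is the index bookkeeping in the reflected derivative. The argument is otherwise routine.
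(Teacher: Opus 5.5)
Your proposal is correct and matches the paper, which obtains this corollary from \zcref{lem:firstderivativeoninterval} with the single phrase ``by symmetry''. Your reflection $n\mapsto -n$, including the bookkeeping $g'(m)=-(\MZu\chi_S)'(-m-1)$ and the check $\chi_S'(b-1)=1$, is that symmetry argument written out in full.
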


\begin{proof}[Proof of \zcref{thm:c1p}]
The case $p=\infty$ is a consequence of \zcref{flat_bound} and ${\|\chi_S'\|}_{\ell^\infty(\Z)}=1$. It remains to consider $1\leq p<\infty$.
By \zcref{lem:c1pboundedsuffices}, it suffices to consider nonempty bounded $S$. Thus, there exists $m\geq1$ and $a_1\leq b_1<a_2\leq b_2<\ldots<a_m\leq b_m$ such that
\[S=[a_1,b_1]\cup\dotsb\cup [a_m,b_m].\] 
That means we may decompose $\mathbb{Z}$ into intervals $I$ of four different types: $(-\infty,a_1-1]$, $[a_i,b_i-1]$, $[b_i,a_{i+1}-1]$ and $[b_m,\infty)$.
In order to prove $C_{1,p}\leq\kappa_p^{\frac1p}$, on each of these intervals $I$, we will show
\[
\sum_{n\in I}
|{(\MZu\chi_S)}'(n)|^p
\leq
\kappa_p
\sum_{n\in I}
|\chi_S'(n)|^p
,
\]
using \zcref{lem:firstderivativeoninterval,cor:firstderivativeoninterval}.

\paragraph{\underline{$I=(-\infty,a_1-1]$:}} 
By \zcref{cor:firstderivativeoninterval}, for every $N\in\N$ we have
\[
\sum_{n=-N-1}^{a_1-1}{|{(\MZu\chi_S)}'(n)|}^p
\leq
\kappa_p
{|\chi_S'(a_1-1)|}^p
\]
and we let $N\rightarrow\infty$ to finish this case.

\paragraph{\underline{$I=[b_m,\infty)$:}} 
This follows from \zcref{lem:firstderivativeoninterval} similarly to the previous case.

\paragraph{\underline{$I=[a_i,b_i-1]$:}}
On this interval we have $(\MZu\chi_S)'=0$ since $\MZu\chi_S=1$ on $[a_i,b_i]$.

\paragraph{\underline{$I=[b_i,a_{i+1}-1]$:}}
By \zcref{cor: convexity} there is a $c_i\in [b_i,a_{i+1}]$ such that $\MZu\chi_S$ is decreasing on $[b_i,c_i]$ and increasing on $[c_i,a_{i+1}]$.
Thus, by applying \zcref{lem:firstderivativeoninterval,cor:firstderivativeoninterval} to these two intervals we get
\[
\sum_{n=b_i}^{a_{i+1}-1}{|{(\MZu\chi_S)}'(n)|}^p
\leq
\kappa_p(
|\chi_S'(b_i)|^p
+
|\chi_S'(a_{i+1}-1)|^p
)
.
\]
Combining these four cases, we obtain the desired upper bound:
\[\sum_{n\in\mathbb{Z}}{|{(\MZu\chi_S)}'(n)|}^p\leq\kappa_p\sum_{n\in\mathbb{Z}}{|\chi_S'(n)|}^p.\]
Thus, $C_{1,p}\leq\kappa_p^{1/p}$.
\par
As for the lower bound of $C_{1,p}$, take $S=\{0\}$.
Then out of the above cases we only have the intervals $(-\infty,a_1-1]$ and $[b_m,\infty)$.
Moreover, $\MZu\chi_S(n)=\frac{1}{1+|n|}$, which means all inequalities become equalities as $N\rightarrow\infty$ and hence \(C_{1,p}\geq\kappa_p^{\frac1p}\).
\end{proof}


\subsection{Proof of \texorpdfstring{\zcref{thm:c2p}}{\ref{thm:c2p}}}\label{sec:proof:it:main theorem c2p}

In this \zcref[noref,nocap]{sec:proof:it:main theorem c2p} we prove the upper bound $C_{2,p}\leq\frac{1}{2}$.
The lower bound follows from \zcref{thm:ckp}, which is proven in \zcref{sec:proof:it:main theorem ckp}.

The proof relies on the following pointwise bounds on $(\MZu f)''$, \zcref{lem:m2fleqf22,lem:m2fleq12}:

\begin{lemma}
\label{lem:m2fleqf22}
Let $S\subset\Z$.
Let $n\in\Z$ such that $|\chi_S''(n)|=2$ or $\chi_S(n+1)=1$.
Then $|(\MZu\chi_S)''(n)|\leq\frac{|\chi_S''(n)|}{2}$.
\end{lemma}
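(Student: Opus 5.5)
The statement is pointwise, and $\chi_S''(n)=\chi_S(n)+\chi_S(n+2)-2\chi_S(n+1)$ depends only on the three values $\chi_S(n),\chi_S(n+1),\chi_S(n+2)$. So I would prove it by a case distinction on these values, writing $M:=\MZu\chi_S$. In every case I would use two facts: the trivial bound $M\leq1$, with $M(m)=1$ for $m\in S$, and lower bounds on $M$ obtained by averaging over two- or three-point intervals. The only earlier result needed is \zcref{lem: convexity}, which fixes the sign of $M''(n)$.

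\textbf{Case $\chi_S(n+1)=1$.} Then $M(n+1)=1$, so $M''(n)=M(n)+M(n+2)-2\leq0$, and it suffices to show $M(n)+M(n+2)\geq 2+\chi_S''(n)/2$.
\begin{itemize}
\item If $\chi_S(n)=\chi_S(n+2)=1$, then $\chi_S''(n)=0$. Also $M(n)=M(n+2)=1$, so $M''(n)=0$.
\item If exactly one of $n,n+2$ lies in $S$, say $n$ (the other case is symmetric), then $\chi_S''(n)=-1$. We have $M(n)=1$. Averaging over $\{n+1,n+2\}$ gives $M(n+2)\geq\frac12$, hence $|M''(n)|\leq\frac12$.
\item If neither $n$ nor $n+2$ lies in $S$, then $\chi_S''(n)=-2$. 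Averaging over $\{n,n+1\}$ and over $\{n+1,n+2\}$ gives $M(n),M(n+2)\geq\frac12$, hence $|M''(n)|\leq1$.
\end{itemize}

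\textbf{Case $\chi_S(n+1)=0$.} Here the hypothesis forces $|\chi_S''(n)|=2$. Since $\chi_S''(n)=\chi_S(n)+\chi_S(n+2)$, this means $n,n+2\in S$ and $\chi_S''(n)=2$. By \zcref{lem: convexity}~\zcref{it:convexity}, $M''(n)\geq0$. For the upper bound, $M(n)=M(n+2)=1$, and averaging over $\{n,n+1,n+2\}$ gives $M(n+1)\geq\frac23$. Hence $M''(n)\leq 2-\frac43=\frac23\leq1=\chi_S''(n)/2$.

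These cases exhaust the hypotheses, which proves the lemma. There is no real obstacle. The only point requiring care is the sign of $M''(n)$ in the last case: without \zcref{lem: convexity} one gets only the upper bound, and the inequality $M''(n)\geq0$ is what turns it into a bound on $|M''(n)|$.
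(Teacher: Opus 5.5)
Your proof is correct and matches the paper's argument in the subcases $\chi_S(n+1)=1$, $\chi_S''(n)\in\{0,-1\}$. For $|\chi_S''(n)|=2$ the paper does not use your direct averaging estimates; it simply invokes the bound $\|(\MZu\chi_S)''\|_{\ell^\infty(\Z)}\leq1$ from \zcref{flat_bound}.

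One small correction to your closing remark: in the case $n,n+2\in S$, $n+1\notin S$, you do not need \zcref{lem: convexity} at all. There $M(n)=M(n+2)=1\geq M(n+1)$, so $M''(n)=2-2M(n+1)\geq0$ follows directly.
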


\begin{proof}
The case $|\chi_S''(n)|=2$ is a consequence of \zcref{flat_bound}, so it remains to consider $\chi_S(n+1)=1$ which implies also $\MZu \chi_S(n+1)=1$.
In this case $\chi_S''(n)=\chi_S(n)+\chi_S(n+2)-2\leq0$ which means $\chi_S''(n)\in\{0,-1\}$.

If $\chi_S''(n)=0$ then $\chi_S(n)=\chi_S(n+2)=1$, and thus also $\MZu \chi_S=1$ on $\{n,n+1,n+2\}$ which means
\(
|(\MZu \chi_S)''(n)|=0=\frac{1}{2}|\chi_S''(n)|.
\)

If $\chi_S''(n)=-1$ then $\{\chi_S(n),\chi_S(n+2)\}=\{0,1\}$ which means $\max\{\MZu \chi_S(n),\MZu \chi_S(n+2)\}=1$.
Since $\chi_S(n+1)=1$ we have $\min\{\MZu \chi_S(n),\MZu \chi_S(n+2)\}\geq\frac12$ and thus
\[
|(\MZu \chi_S)''(n)|=1-\min\{\MZu \chi_S(n),\MZu \chi_S(n+2)\}\leq \frac{1}{2}=\frac{|\chi_S''(n)|}{2}.
\]
\end{proof}

\begin{lemma}
\label{lem:m2fleq12}
Let $S\subset\Z$ be bounded. Let $n\in\Z$ such that $\chi_S(n+1)=0$ and $|\chi_S''(n)|\leq 1$.
Then $|(\MZu \chi_S)''(n)|\leq\frac{1}{2}$.
\end{lemma}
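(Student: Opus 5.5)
Write $M=\MZu\chi_S$. Since $n+1\notin S$, \zcref{lem: convexity}~\zcref{it:convexity} gives $M''(n)\geq0$. So it suffices to prove the upper bound
\[
M(n)+M(n+2)-2M(n+1)\leq\tfrac12 .
\]
The hypothesis $|\chi_S''(n)|\leq1$ with $\chi_S(n+1)=0$ means that $n$ and $n+2$ are not both in $S$. By reflection symmetry I may assume $n+2\notin S$, while $\chi_S(n)\in\{0,1\}$.

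\textbf{Case $n\in S$.} Then $M(n)=1$ and $M(n+1)\geq\frac12$. I must show $M(n+2)\leq 2M(n+1)-\frac12$. Let $J\ni n+2$ be an interval that nearly attains $M(n+2)$; since $S$ is bounded I may take $J$ finite. There are two subcases.
\begin{enumerate}
\item If $J$ contains $n+1$, or lies to the right of $n+2$, I extend $J$ to the interval $J'=J\cup[n,\min J]$. This adds at most two points, one of which is $n\in S$. Then $M(n+1)\geq \av\Z{J'}\chi_S\geq (|J|M(n+2)+1)/(|J|+2)$. It remains to check $(|J|m+1)/(|J|+2)\geq m/2+1/4$ for $m=M(n+2)\leq1$. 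This is a short computation; the case $|J|$ small and $J=[n+2,\dots]$ needs care because $n+2\notin S$ forces $m\leq (|J|-1)/|J|$.
\item If $J$ starts at $n+2$, I instead compare directly, using that $n+2$ and $n+1$ are zeros.
\end{enumerate}

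\textbf{Case $n\notin S$.} Then $n,n+1,n+2\notin S$. By \zcref{cor: convexity}, $M$ is convex on this zero run, so $M''(n)\geq0$ again. For the upper bound I use \zcref{flat_bound}: from $M(n+1)\geq\frac12\max\{M(n),M(n+2)\}$ I get $M''(n)\leq \min(M(n),M(n+2))$. If both values are at most $\frac12$ this finishes the case. Otherwise, say $M(n)>\frac12$. Then the maximizing interval for $n$ has density above $\frac12$ and contains the zero $n$, hence it has length at least $3$. Adding the points $n+1$ and $n+2$ (both zeros) to it gives a sharper bound of the form $M(n+1)\geq \frac{L}{L+1}M(n)$, with $L\geq3$. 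Combining this with the trivial $M(n+2)\leq 1$ and a similar bound from the other side should give $\leq\frac12$.

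\textbf{Main obstacle.} I expect the hardest part to be this last subcase, where both $M(n)$ and $M(n+2)$ are large, with near-extremal intervals on opposite sides. There I will try the averaging trick from the proof of \zcref{lem: convexity}: write $M(n+1)$ via the optimal interval on one side, and exploit $\frac1{r+1}$ versus $\frac1r,\frac1{r+2}$ together with the two extra zeros to gain the additive constant $\frac12$.
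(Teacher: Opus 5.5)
Your approach works and differs from the paper's. Two steps need fixing, and the subcase you call the main obstacle is already closed by your own estimates.

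\textbf{Case $n\in S$.} The inequality $\frac{|J|m+1}{|J|+2}\geq\frac m2+\frac14$ is equivalent to $(2m-1)(|J|-2)\geq0$. It therefore fails for $m<\frac12$ and $|J|\geq3$, e.g.\ $|J|=3$, $m=\frac13$. Split instead on whether $M(n+2)\leq M(n+1)$.
\begin{enumerate}
\item If $M(n+2)\leq M(n+1)$, then $M''(n)\leq 1-M(n+1)\leq\frac12$, since $M(n+1)\geq\frac12$. This covers both $J\ni n+1$ and $m<\frac12$.
\item Otherwise $J$ must start at $n+2$, with $m>\frac12$ and $|J|\geq2$. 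Your two-point extension then gives $M(n+1)\geq\frac m2+\frac14$, which is exactly what is needed.
\end{enumerate}
Your subcase (ii) is then superfluous.

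\textbf{Case $n\notin S$.} The bound $M''(n)\leq\min\{M(n),M(n+2)\}$ finishes whenever \emph{one} of the two values is at most $\frac12$, not only when both are. So what remains is $M(n),M(n+2)>\frac12$. Then both (near-)optimal intervals contain a zero and have density above $\frac12$, so $L,L'\geq3$. Adding your two one-sided bounds gives
\[
M''(n)\leq\frac{M(n)}{L+1}+\frac{M(n+2)}{L'+1}\leq\frac14+\frac14=\frac12 .
\]
No averaging trick is needed. Both one-sided bounds are required, though: one of them together with $M(n+2)\leq1$ only yields $\frac34$.

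\textbf{Comparison with the paper.} The paper splits by the signs of $M'(n)$ and $M'(n+1)$.
\begin{enumerate}
\item If the signs agree, $|M''(n)|\leq\|M'\|_{\ell^\infty(\Z)}\leq\frac12$ by \zcref{thm: GM2}.
\item If they differ, $n+1$ is a strict local minimum. \zcref[S]{lem: good balls}, which is where boundedness of $S$ is used, gives exactly optimal intervals $I_n$ ending at $n$ and $I_{n+2}$ starting at $n+2$. These are glued through $n+1$. The rational bound \eqref{eq:mf2formula}, together with \eqref{ineq: N2} and a separate subcase $K_n=1$, finishes the proof. The hypothesis $|\chi_S''(n)|\leq1$ enters only in that subcase.
\end{enumerate}

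You instead split by the values of $\chi_S$ at $n$ and $n+2$ and use the hypothesis at the outset. Each case then needs only a one- or two-point extension of a single near-optimal interval, plus \zcref{flat_bound}. This is shorter and more elementary, and avoids \zcref{lem: good balls}, \zcref{thm: GM2} and the algebra. Since you only use near-maximizing finite intervals, it also proves the lemma for unbounded $S$, as asserted in the remark following the lemma. The paper's argument, in turn, isolates the only configuration in which $M''(n)$ can exceed $\|M'\|_{\ell^\infty(\Z)}$, namely a strict local minimum at $n+1$. It handles that configuration with a single explicit formula.
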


\begin{remark}
\zcref[S]{lem:m2fleq12} also holds for unbounded $S$, but for our purposes it suffices to consider the bounded case.
\end{remark}

\begin{proof}[Proof of \zcref{lem:m2fleq12}]
If $(\MZu \chi_S)'(n)$ and $(\MZu \chi_S)'(n+1)$ have the same sign then by \zcref{thm: GM2} we have
\[
|(\MZu \chi_S)''(n)|
\leq
\|(\MZu \chi_S)'\|_{\ell^\infty(\Z)}
\leq
\frac{1}{2}\|\chi_S'\|_{\ell^\infty(\Z)}
\leq
\frac{1}{2}.
\]

It remains to consider the case that they have opposite sign.
Since $\chi_S(n+1)=0$, by \zcref{lem: convexity} we have $(\MZu \chi_S)''(n)\geq0$ which in this case means $\MZu \chi_S'(n+1)>0$ and $\MZu \chi_S'(n)<0$.
Then by \zcref{lem: good balls}, for $m\in\{n,n+2\}$ there is a bounded interval $I_m\ni m$ such that for $N_m=\cm{I_m\cap S}$ and $K_m=\cm{I_m}$ we have
\(
\MZu \chi_S(m)=\frac{N_m}{K_m}
.
\)
Moreover, $n\notin I_{n+2}$ and $n+2\notin I_n$, so $I_n\cap I_{n+2}=\emptyset$.
Then we can compute
\begin{align}
\nonumber
(\MZu \chi_S)''(n)
&=
\MZu \chi_S(n)-2\MZu \chi_S(n+1)+\MZu \chi_S(n+2)
\\
\nonumber
&\leq
\frac{N_n}{K_n}-2\frac{N_n+N_{n+2}}{K_n+K_{n+2}+1}+\frac{N_{n+2}}{K_{n+2}}
\\
\label{eq:mf2formula}
&=
\frac{(K_{n+2}-K_n)(N_nK_{n+2}-N_{n+2}K_n)+N_nK_{n+2}+N_{n+2}K_n}{K_nK_{n+2}(K_n+K_{n+2}+1)}
\end{align}
Moreover, $N_m\leq K_m$ and by assumption
\[
\frac{N_n}{K_n+1}
\leq
\MZu \chi_S(n+1)
<
\frac{N_{n+2}}{K_{n+2}}
,
\]
which implies
\begin{equation}\label{ineq: N2}
N_nK_{n+2}-N_{n+2}K_n\leq N_{n+2}.
\end{equation}
By symmetry it suffices to consider the case $K_n\leq K_{n+2}$.

First consider the case $K_n\geq 2$.
Then, using \zcref{ineq: N2,eq:mf2formula}
\begin{align*}
|(\MZu \chi_S)''(n)|
&\leq
\frac{(K_{n+2}-K_n)N_{n+2}+N_nK_{n+2}+N_{n+2}K_n}{K_nK_{n+2}(K_n+K_{n+2}+1)}
\\
&=
\frac{K_{n+2}(N_n+N_{n+2})}{K_nK_{n+2}(K_n+K_{n+2}+1)}
\leq
\frac{1}{K_n}
\leq
\frac{1}2
\end{align*}

Otherwise, $K_n=1$, and thus $N_n=1$ (since $1\leq N_n\leq K_n$) and $\MZu \chi_S(n)=1$. By \zcref{lem: good balls} and the assumption that $S$ is bounded this means $\chi_S(n)=1$. Additionally, since $\chi_S(n+1)=0$ and $\chi_S''(n)\leq 1$ we conclude that $\chi_S(n+2)=0$.
Thus, $\MZu \chi_S(n+2)\neq 1$ and $K_{n+2}\geq 2$.
Moreover, by \zcref{ineq: N2} we have $N_{n+2}\geq \frac{K_{n+2}}{2}$, and therefore by \zcref{eq:mf2formula} 
\begin{align*}
    |(\MZu \chi_S)''(n)|
&=\frac{(K_{n+2}-1)(K_{n+2}-N_{n+2})+K_{n+2}+N_{n+2}}{K_{n+2}(K_{n+2}+2)}\\
&=\frac{K^2_{n+2}+N_{n+2}(2-K_{n+2})}{K_{n+2}(K_{n+2}+2)}\\
&\leq \frac{K^2_{n+2}+(K_{n+2}/2)(2-K_{n+2})}{K_{n+2}(K_{n+2}+2)}
=
\frac{1}{2}.
\end{align*}
\end{proof}

Now, we can prove \zcref{thm:c2p}:

\begin{proof}[Proof of \zcref{thm:c2p}]
Since in the proof of \zcref{thm:ckp} we already proved $C_{2,p}\geq\frac{1}{2}$ it remains to show the reverse inequality.

We first consider $p=\infty$ in which case ${\|\chi_S''\|}_{\ell^\infty(\Z)}\in\{0,1,2\}$.
If ${\|\chi_S''\|}_{\ell^\infty(\Z)}=0$ then $\chi_S$ and $\MZu \chi_S$ are constant which means ${\|(\MZu \chi_S)''\|}_{\ell^\infty(\Z)}=0=\frac12{\|\chi_S''\|}_{\ell^\infty(\Z)}$.
If ${\|\chi_S''\|}_{\ell^\infty(\Z)}=1$ then by \zcref{lem:m2fleqf22,lem:m2fleq12} for any $n\in\Z$ we have \(|{(\MZu \chi_S)}''(n)|\leq\frac{1}{2}{\|\chi_S''\|}_{\ell^\infty(\Z)}\).
If ${\|\chi_S''\|}_{\ell^\infty(\Z)}=2$ then by \zcref{flat_bound} we have ${\|(\MZu \chi_S)''\|}_{\ell^\infty(\Z)}\leq1=\frac12{\|\chi_S''\|}_{\ell^\infty(\Z)}$.

It remains to consider $1\leq p<\infty$. By \zcref{lem:c1pboundedsuffices}, we may assume that $S$ is finite. In other words, 
\[S=[a_1,b_1]\sqcup [a_2,b_2]\sqcup\dotsb\sqcup [a_m,b_m]\]
with $a_1\leq b_1\leq a_2\leq b_2\leq\dotsb\leq a_m\leq b_m$ and $b_i<a_{i+1}-1$ for all $1\leq i\leq m-1$. 
In order to bound \(\|{(\MZu\chi_S)}''\|_{\ell^p(\Z)}^p\) we decompose
$\Z$ into intervals $I$ of the form $(-\infty,a_1-2]$, $[a_i-1,b_i-1]$, $[b_i,a_{i+1}-2]$ and $[b_m,\infty)$.
On each of these intervals $I$ we prove
\begin{equation}\label{eq:c2p_bounded_by_parts}
\sum_{n\in I}
|{(\MZu\chi_S)}'(n)|^p
\leq
2^{-p}
\sum_{n\in I}
|\chi_S'(n)|^p
\end{equation}
which implies $C_{2,p}\leq\frac12$.

Recall that by \zcref{lem: convexity}, ${(\MZu \chi_S)}''(n)\geq0$ for all $n\in\mathbb{Z}\setminus(S-1)$ and ${(\MZu \chi_S)}''(n)\leq0$ for all $n\in S-1$:

\paragraph{\underline{$(-\infty,a_1-2]$}}
Since $\MZu \chi_S$ is convex and bounded on $(-\infty,a_1-2]$ we have $(\MZu \chi_S)'(n)\to0$ as $n\to-\infty$, and thus by \zcref{flat_bound} we have
\[
\sum_{n=-\infty}^{a_1-2}{|{(\MZu \chi_S)}''(n)|}^p=\sum_{n=-\infty}^{a_1-2}{{(\MZu \chi_S)}''(n)}^p\leq {\left(\sum_{n=-\infty}^{a_1-2}{(\MZu \chi_S)}''(n)\right)}^p
={(\MZu \chi_S)}'(a_1-1)^p\leq 2^{-p},
\]
while
\[\sum_{n=-\infty}^{a_1-2}{|\chi_S''(n)|}^p=\chi_S''(a_1-2)=1.\]
Thus, \zcref{eq:c2p_bounded_by_parts} holds for $I=(-\infty,a_1-2]$.

\paragraph{\underline{$[a_i-1,b_i-1]$}}
If $a_i=b_i$ then
\(
\chi_S''(a_i-1)=-2
,
\)
which means \zcref{eq:c2p_bounded_by_parts} holds by \zcref{lem:m2fleqf22}.

It remains to consider $a_i<b_i$. Then, since $\MZu \chi_S=1$ on $[a_i,b_i]\supset\{a_i,a_i+1,b_i-1,b_i\}$, and since by \zcref{flat_bound} we have $\MZu \chi_S\geq\frac12$ on $\{a_i-1,b_i+1\}$, we can conclude
\[
\sum_{n=a_i-1}^{b_i-1}{|{(\MZu \chi_S)}''(n)|}^p
=
{|{(\MZu \chi_S)}''(a_i-1)|}^p+{|{(\MZu \chi_S)}''(b_i-1)|}^p
\leq
2^{-p}+2^{-p}
,
\]
while
\begin{equation*}
\sum_{n=a_i-1}^{b_i-1}{|\chi_S''(n)|}^p={|\chi_S''(a_i-1)|}^p+{|\chi_S''(b_i-1)|}^p=2.
\end{equation*}
Thus, \zcref{eq:c2p_bounded_by_parts} holds.

\paragraph{\underline{$[b_i,a_{i+1}-2]$}}
If $b_i=a_{i+1}-2$ then
\(
\chi_S''(b_i)=2
,
\)
which means \zcref{eq:c2p_bounded_by_parts} holds by \zcref{lem:m2fleqf22}.

It remains to consider $b_i<a_{i+1}-2$.
This is the main case.
For all $n\in [b_i,a_{i+1}-2]$, we have $|\chi_S''(n)|\leq1$ and $\chi_S(n+1)=0$ which by \zcref{lem:m2fleq12} implies $|{(\MZu \chi_S)}''(n)|\leq\frac{1}{2}$.
Moreover, by \zcref{lem: convexity} we have ${(\MZu \chi_S)}''\geq0$ on $[b_i,a_i-2]$ and together with \zcref{flat_bound} we can conclude
\[
\sum_{n=b_i}^{a_{i+1}-2}{|{(\MZu \chi_S)}''(n)|}^p
\leq\frac{1}{2^{p-1}}
\sum_{n=b_i}^{a_{i+1}-2}{{(\MZu \chi_S)}''(n)}
=
\frac{
{(\MZu \chi_S)}'(a_{i+1}-1)
-
{(\MZu \chi_S)}'(b_i)
}{
2^{p-1}
}
\leq\frac{1}{2^{p-1}}
,
\]
while
\[\sum_{n=b_i}^{a_{i+1}-2}{|\chi_S''(n)|}^p={|\chi_S''(b_i)|}^p+{|\chi_S''(a_{i+1}-2)|}^p=2.\]
Thus, \zcref{eq:c2p_bounded_by_parts} holds.

\paragraph{\underline{$[b_m,\infty)$}}
This case follows from the same argument as $I=(-\infty,a_1-2]$ by symmetry.

We have proven \zcref{eq:c2p_bounded_by_parts} for all intervals in our decomposition of $\Z$, which finishes the proof of $C_{2,p}\leq\frac{1}{2}$.
Together with \zcref{thm:ckp}, which is shown in \zcref{sec:proof:it:main theorem ckp}, we get the desired conclusion $C_{2,p}=\frac{1}{2}$ for all $1\leq p\leq\infty$.
\end{proof}

\subsection{Proof of \texorpdfstring{\zcref{thm:czu1p_c1p}}{\ref{thm:czu1p_c1p}}}\label{sec:proof:it: czu1p_c1p}

Recall that by \zcref{thm:c1p}, we have
\[C_{1,p}^p=\sum_{n=1}^\infty{\left(\frac{1}{n}-\frac{1}{n+1}\right)}^p.\]

The following function will serve as the example for all $1<p<\infty$:
\begin{align*}
f(n)
&:=
\max\{0,2-|n|\}
=
\begin{cases}
2&\text{if }n=0
,\\
1&\text{if }n=\pm1
,\\
0&\text{else}
\end{cases}
&
\MZu f(n)
&=
\begin{cases}
2&\text{if }n=0,\\
\frac{3}{2}&\text{if }n=\pm1,\\
\frac{4}{|n|+2}&\text{else}
\end{cases}
\end{align*}
and
\begin{align*}
{\|{(\MZu f)}'\|}_{\ell^p(\mathbb{Z})}^p
&=
\sum_{n\in\mathbb{Z}}{|\MZu f(n+1)-\MZu f(n)|}^p
=
2\left(\frac{1}{2^p}+\frac{1}{2^p}+\sum_{n=2}^\infty{\left|\frac{4}{n+3}-\frac{4}{n+2}\right|}^p\right)
\\
{\|f'\|}_{\ell^p(\mathbb{Z})}^p
&=
\sum_{n\in\mathbb{Z}}{|f(n+1)-f(n)|}^p={|1-0|}^p+{|2-1|}^p+{|1-2|}^p+{|0-1|}^p=4
.
\end{align*}
Thus
\begin{align*}
\frac{{\|{(\MZu f)}'\|}_{\ell^p(\mathbb{Z})}^p}{{\|f'\|}_{\ell^p(\mathbb{Z})}^p}-C_{1,p}^p
&=
\frac{1}{2^p}+\frac12\sum_{n=4}^\infty{\left|\frac{4}{n+1}-\frac{4}n\right|}^p
-
\sum_{n=1}^\infty{\left(\frac{1}{n}-\frac{1}{n+1}\right)}^p
\\
&=
\frac12\sum_{n=4}^\infty{\left|\frac{4}{n+1}-\frac{4}n\right|}^p
-
\sum_{n=2}^\infty{\left(\frac{1}{n}-\frac{1}{n+1}\right)}^p
.
\end{align*}
Hence, it suffices to prove
\begin{equation}\label{eq:comparison_of_series}
\sum_{n=4}^\infty {\left(\frac{4}{n}-\frac{4}{n+1}\right)}^p>2\sum_{n=2}^\infty {\left(\frac{1}{n}-\frac{1}{n+1}\right)}^p
\end{equation}
in order to obtain the desired conclusion, that for all $1<p<\infty$, we have
\(\CZu_{1,p}>C_{1,p}.\)
For $n\in\mathbb{N}$ define
\begin{align*}
s_n
&=
\frac{4}{n+3}-\frac{4}{n+4}
&
t_n
&=
\frac{1}{\lceil n/2\rceil+1}-\frac{1}{\lceil n/2\rceil+2}
.
\end{align*}
Then, \zcref{eq:comparison_of_series} is equivalent to 
\begin{equation}\label{eq:x_n^p>y_n^p}
\sum_{n=1}^\infty s_n^p>\sum_{n=1}^\infty t_n^p.
\end{equation}
To prove this, we need to use \zcref{Karamata}. Once the sequences ${\{s_i\}}_{i=1}^n$ and ${\{t_i\}}_{i=1}^n$ satisfy the desired conditions for all $n\in\mathbb{N}$, then we will obtain \zcref{eq:comparison_of_series} and thus complete the proof. 
\par
We check the conditions one-by-one:
Clearly, we have $s_n\geq s_{n+1}$ and $t_n\geq t_{n+1}$ for all $n$.
Moreover, notice that 
\[\sum_{i=1}^ns_i=1-\frac{4}{n+4}.\]
As for the right hand side, if $n=2m$ is even, then 
\[\sum_{i=1}^nt_i=2\sum_{i=1}^m\frac{1}{i+1}-\frac{1}{i+2}=1-\frac{2}{m+2}=1-\frac{4}{n+4};\]
On the other hand, if $n=2m+1$ is odd, then
\begin{align*}
\sum_{i=1}^nt_i
&=
\left(2\sum_{i=1}^m\frac{1}{i+1}-\frac{1}{i+2}\right)+\left(\frac{1}{m+2}-\frac{1}{m+3}\right)
=
1-\frac{1}{m+2}-\frac{1}{m+3}
=
1-\frac{2}{n+3}-\frac{2}{n+5}
\\
&=
1-\frac{4(n+4)}{(n+3)(n+5)}<1-\frac4{n+4}
.
\end{align*}
Since the last inequality is strict and the map $t\mapsto t^p$ on $[0,\infty)$ is strictly convex for $1<p<\infty$ we can conclude \zcref{eq:x_n^p>y_n^p} from \zcref{Karamata,infinite_Karamata} and finish the proof.
Letting $n\to\infty$, we obtain
\[\sum_{i=1}^\infty s_i=\sum_{i=1}^\infty t_i=1.\]
To sum up, we finish the proof of \zcref{eq:comparison_of_series} and thus the proof of \zcref{thm:czu1p_c1p}.

\subsection{Proof of \texorpdfstring{\zcref{thm:ckp}}{\ref{thm:ckp}}}\label{sec:proof:it:main theorem ckp}

\begin{proof}[Proof of \zcref{thm:ckp}]
For $\chi_{\{0\}}:\mathbb{Z}_3\rightarrow\{0,1\}$ we have \(\Mu{\mathbb{Z}_3}\chi_{\{0\}}=\frac12\chi_{\{0\}}+\frac12\), which means that for all $k\geq1$ we have
\(
{(\Mu{\mathbb{Z}_3}\chi_{\{0\}})}^{(k)}
=
\frac12
\chi_{\{0\}}^{(k)}
.
\)
Let $S_a=\{0,3,6,\dotsc,3a\}$. Therefore, by \zcref{lem:discretevsperiodic}, we obtain that 
\[C_{k,p}\geq\lim_{a\to\infty}\frac{{\left\|{(\MZu \chi_{S_a})}^{(k)}\right\|}_{\ell^p(\mathbb{Z})}}{{\left\|\chi_{S_a}^{(k)}\right\|}_{\ell^p(\mathbb{Z})}}\geq\frac{{\left\|{(\Mu{\mathbb{Z}_3}\chi_{\{0\}})}^{(k)}\right\|}_{\ell^p(\mathbb{Z}_3)}}{{\left\|\chi_{\{0\}}^{(k)}\right\|}_{\ell^p(\mathbb{Z}_3)}}=\frac{1}{2}.\]
This finishes the proof of \zcref{thm:ckp} and in particular $C_{2,p}\geq\frac{1}{2}$, the remaining inequality in \zcref{thm:c2p}.
\end{proof}

\subsection{Proof of \texorpdfstring{\zcref{thm:ckpgeq}}{\ref{thm:ckpgeq}}}\label{sec:proof:it:main theorem ckpgeq}

On the circle $\mathbb{Z}_N$, for a function $f:\mathbb{Z}_N\to\mathbb{R}$, we define its discrete Fourier transform as follows:
\[\ft{f}(\xi):=\sum_{n\in\mathbb{Z}_N}f(n)e^{-2\pi i\frac{n}{N}\xi}.\]
Then,
\begin{align*}
\|f\|_{\ell^2(\Z_N)}^2
&=
\frac1N
\|\ft f\|_{\ell^2(\Z_N)}^2
,&
\ft{f^{(k)}}(\xi)
&=
(
e^{2\pi i\frac{\xi}{N}}
-1
)^k
\ft f(\xi)
.
\end{align*}
\begin{proof}[Proof of \zcref{thm:ckpgeq}]
Now, let $N=8$.
Then
\(
|
e^{2\pi i\frac \xi 8}
-1
|
\)
is maximal for $\xi =4$ where it assumes the value $2$,
while for $\xi =0,\ldots,7$ with $\xi \neq 4$ we have
\[
|
e^{2\pi i\frac \xi 8}
-1
|
\leq
|
e^{2\pi i\frac 38}
-1
|
=
|
1
+
e^{\frac{\pi i}4}
|
<
2
.
\]
Set $f=\chi_{\{0,1,2,5\}}$.
Interpreting vectors as maps $\Z_8\to\R$ so that $f=(1,1,1,0,0,1,0,0)$ we have $\Mu{\mathbb{Z}_8} f=(1,1,1,\frac34,\frac23,1,\frac23,\frac34)$.
This means
\(
\ft f(4)=0
\)
while
\[\ft{\Mu{\mathbb{Z}_8} f}(4)=\sum_{n=0}^7{(-1)}^{n+1}\Mu{\mathbb{Z}_8}f(n)=-\frac{1}{6}\neq0,\]
and thus
\begin{align*}
8\|f^{(k)}\|_{\ell^2(\Z_8)}^2
&=
\|\ft{f^{(k)}}\|_{\ell^2(\Z_8)}^2
\leq
{\|\ft{f}\|}_{\ell^2(\Z_8)}^2|1+e^{\frac{\pi i}4}|^{2k}
=
4
|1+e^{\frac{\pi i}4}|^{2k},
\\
8
\|(\Mu{\mathbb{Z}_8} f)^{(k)}\|_{\ell^2(\Z_8)}^2
&=
\|\ft{(\Mu{\mathbb{Z}_8} f)^{(k)}}\|_{\ell^2(\Z_8)}^2
\geq
|\ft{\Mu{\mathbb{Z}_8} f}(4)|^2
2^{2k}
=
\frac{2^{2k}}{6^2}
,
\end{align*}
which means
\begin{equation}
\label{eq:mfkfklowerbound}
\frac{\|(\Mu{\mathbb{Z}_8} f)^{(k)}\|_{\ell^2(\Z_8)}}{\|f^{(k)}\|_{\ell^2(\Z_8)}}\geq\frac{1}{12}
\left(\frac{2}{|1+e^{\frac{\pi i}4}|}\right)^k.
\end{equation}
Now, for all $1\leq p\leq q\leq\infty$ we have \(\|f\|_{\ell^q(\Z_8)}\leq\|f\|_{\ell^p(\Z_8)}\leq8^{\frac1p-\frac1q}\|f\|_{\ell^q(\Z_8)}\).
This finishes the proof by an application of \zcref{lem:discretevsperiodic}.

Note that $\ft f(3)\neq0$, and thus the lower bound \zcref{eq:mfkfklowerbound} on the exponential growth is optimal for this particular $f$ in the sense that it also holds as an upper bound albeit with a different factor than $\frac1{12}$.
\end{proof}

\section{Acknowledgments.}
We would like to thank Emanuel Carneiro for discussions relating continuous and discrete higher derivatives.

J.M.\ was partially supported by the AMS Stefan Bergman Fellowship and the Simons Foundation Grant $\# 453576$.
J.W.\ was funded by the European Union's Horizon research and innovation programme under the Marie Skłodowska-Curie Action 2023 No 101151034 (SRMF).
\printbibliography

@Article{AP,
	Author = {Aldaz, J. M. and P{\'e}rez L{\'a}zaro, J.},
	Title = {Functions of bounded variation, the derivative of the one dimensional maximal function, and applications to inequalities},
	FJournal = {Transactions of the American Mathematical Society},
	Journal = {Trans. Am. Math. Soc.},
	ISSN = {0002-9947},
	Volume = {359},
	Number = {5},
	Pages = {2443--2461},
	Year = {2007},
	Language = {English},
	DOI = {10.1090/S0002-9947-06-04347-9},
	zbMATH = {5120644},
	Zbl = {1143.42021}
}

@article{BCHP,
 author = {Bober, Jonathan and Carneiro, Emanuel and Hughes, Kevin and Pierce, Lillian B.},
 title = {On a discrete version of {Tanaka}'s theorem for maximal functions},
 fjournal = {Proceedings of the American Mathematical Society},
 journal = {Proc. Am. Math. Soc.},
 issn = {0002-9939},
 volume = {140},
 number = {5},
 pages = {1669--1680},
 year = {2012},
 language = {English},
 doi = {10.1090/S0002-9939-2011-11008-6},
 zbMATH = {6028813},
 Zbl = {1245.42017}
}

@article{BW,
 author = {Bilz, Constantin and Weigt, Julian},
 title = {The one-dimensional centred maximal function diminishes the variation of indicator functions},
 fjournal = {The Journal of Geometric Analysis},
 journal = {J. Geom. Anal.},
 issn = {1050-6926},
 volume = {35},
 number = {9},
 pages = {25},
 note = {Id/No 256},
 year = {2025},
 language = {English},
 doi = {10.1007/s12220-025-01965-x},
 zbMATH = {8082734}
}

@article{CS,
 author = {Carneiro, Emanuel and Svaiter, Benar F.},
 title = {On the variation of maximal operators of convolution type},
 fjournal = {Journal of Functional Analysis},
 journal = {J. Funct. Anal.},
 issn = {0022-1236},
 volume = {265},
 number = {5},
 pages = {837--865},
 year = {2013},
 language = {English},
 doi = {10.1016/j.jfa.2013.05.012},
 zbMATH = {6261787},
 Zbl = {1317.42017}
}

@book{D,
 author = {Duoandikoetxea, Javier},
 title = {Fourier analysis. {Transl}. from the {Spanish} and revised by {David} {Cruz}-{Uribe}},
 fseries = {Graduate Studies in Mathematics},
 series = {Grad. Stud. Math.},
 issn = {1065-7339},
 volume = {29},
 isbn = {0-8218-2172-5},
 year = {2001},
 publisher = {Providence, RI: American Mathematical Society (AMS)},
 language = {English},
 zbMATH = {1557107},
 Zbl = {0969.42001}
}

@article {Ku,
    AUTHOR = {Kurka, Ond\v{r}ej},
     TITLE = {On the variation of the {H}ardy-{L}ittlewood maximal function},
   JOURNAL = {Ann. Acad. Sci. Fenn. Math.},
  FJOURNAL = {Annales Academi\ae  Scientiarum Fennic\ae . Mathematica},
    VOLUME = {40},
      YEAR = {2015},
    NUMBER = {1},
     PAGES = {109--133},
      ISSN = {1239-629X},
   MRCLASS = {42B25 (42B30 46E35)},
  MRNUMBER = {3310075},
MRREVIEWER = {Vladimir D. Stepanov},
       DOI = {10.5186/aasfm.2015.4003},
       URL = {https://doi.org/10.5186/aasfm.2015.4003},
}

@book{G,
 author = {Garnett, John B.},
 title = {Bounded analytic functions},
 edition = {Revised 1st ed.},
 fseries = {Graduate Texts in Mathematics},
 series = {Grad. Texts Math.},
 issn = {0072-5285},
 volume = {236},
 isbn = {0-387-33621-4},
 year = {2006},
 publisher = {New York, NY: Springer},
 language = {English},
 doi = {10.1007/0-387-49763-3},
 zbMATH = {5062981},
 Zbl = {1106.30001}
}

@article{GM2,
 author = {Gonz{\'a}lez-Riquelme, Cristian and Madrid, Jos{\'e}},
 title = {Sharp inequalities for maximal operators on finite graphs. {II}},
 fjournal = {Journal of Mathematical Analysis and Applications},
 journal = {J. Math. Anal. Appl.},
 issn = {0022-247X},
 volume = {506},
 number = {2},
 pages = {27},
 note = {Id/No 125647},
 year = {2022},
 language = {English},
 doi = {10.1016/j.jmaa.2021.125647},
 zbMATH = {7412837},
 Zbl = {1475.42033}
}

@article{GM3,
 author = {Grafakos, Loukas and Montgomery-Smith, Stephen},
 title = {Best constants for uncentered maximal functions},
 fjournal = {Bulletin of the London Mathematical Society},
 journal = {Bull. Lond. Math. Soc.},
 issn = {0024-6093},
 volume = {29},
 number = {1},
 year = {1997},
 language = {English},
 doi = {10.1112/S0024609396002081},
 zbMATH = {936588},
 Zbl = {0865.42020}
}

@article{Te2,
 author = {Temur, F.},
 title = {The second derivative of the discrete {Hardy}-{Littlewood} maximal function},
 fjournal = {Istanbul Journal of Mathematics},
 journal = {Istanb. J. Math.},
 issn = {2980-3020},
 volume = {3},
 number = {2},
 pages = {45--47},
 year = {2025},
 language = {English},
 doi = {10.26650/ijmath.2025.00026},
 zbMATH = {8153173}
}

@misc{TO,
 author = {Temur, Faruk and {\"O}zcan, Hikmet Burak},
 title = {The higher regularity of the discrete Hardy-Littlewood maximal function},
 year = {2025},
 howpublished = {Preprint, {arXiv}:2504.13019 [math.{CA}] (2025)},
 url = {https://arxiv.org/abs/2504.13019},
 arXiv = {arXiv:2504.13019}
}

@misc{W,
 author = {Weigt, Julian},
 title = {Sobolev bounds and counterexamples for the second derivative of the maximal function in one dimension},
 year = {2024},
 howpublished = {Preprint, {arXiv}:2409.12631 [math.{CA}] (2024)},
 url = {https://arxiv.org/abs/2409.12631},
 arXiv = {arXiv:2409.12631}
}

@article{hardy1930maximal,
  title={A maximal theorem with function-theoretic applications},
  author={Hardy, Godfrey Harold and Littlewood, John Edensor},
  journal={Acta Mathematica},
  volume={54},
  number={1},
  pages={81--116},
  year={1930},
  publisher={Springer}
}

@article{kinnunen1997hardy,
  title={The Hardy-Littlewood maximal function of a Sobolev function},
  author={Kinnunen, Juha},
  journal={Israel Journal of Mathematics},
  volume={100},
  number={1},
  pages={117--124},
  year={1997},
  publisher={Springer}
}

@software{discretemf,
  author = {Weigt, Julian},
  title = {discrete-mf},
  url = {https://codeberg.org/julian-weigt/discrete-mf},
  date = {2026-07-11}
}

@book{GrahamKnuthPatashnik1994ConcreteMathematics2e,
  author    = {Graham, Ronald L. and Knuth, Donald E. and Patashnik, Oren},
  title     = {Concrete Mathematics: A Foundation for Computer Science},
  edition   = {2},
  publisher = {Addison-Wesley},
  address   = {Reading, Massachusetts},
  year      = {1994},
  isbn      = {978-0201558029},
  note      = {See Section~2.6 (Finite and Infinite Calculus).}
}

@article{BRS,
 author = {Beltran, David and Ramos, Jo{\~a}o Pedro and Saari, Olli},
 title = {Regularity of fractional maximal functions through {Fourier} multipliers},
 fjournal = {Journal of Functional Analysis},
 journal = {J. Funct. Anal.},
 issn = {0022-1236},
 volume = {276},
 number = {6},
 pages = {1875--1892},
 year = {2019},
 language = {English},
 doi = {10.1016/j.jfa.2018.11.004},
 zbMATH = {7011439},
 Zbl = {1422.42012}
}

@article{aldaz2012optimal,
  title={Optimal bounds on the modulus of continuity of the uncentered Hardy--Littlewood maximal function},
  author={Aldaz, JM and Colzani, L and P{\'e}rez L{\'a}zaro, J},
  journal={Journal of geometric analysis},
  volume={22},
  number={1},
  pages={132--167},
  year={2012},
  publisher={Springer}
}

@article{melas2003best,
  title={The best constant for the centered Hardy-Littlewood maximal inequality},
  author={Melas, Antonios D},
  journal={Annals of mathematics},
  pages={647--688},
  year={2003},
  publisher={JSTOR}
}

@book{stein1993harmonic,
  title={Harmonic analysis: real-variable methods, orthogonality, and oscillatory integrals},
  author={Stein, Elias M and Murphy, Timothy S},
  number={43},
  year={1993},
  publisher={Princeton University Press}
}

@article {Riesz28,
    AUTHOR = {Riesz, Marcel},
     TITLE = {Sur les fonctions conjugu\'ees},
   JOURNAL = {Math. Z.},
  FJOURNAL = {Mathematische Zeitschrift},
    VOLUME = {27},
      YEAR = {1928},
    NUMBER = {1},
     PAGES = {218--244},
      ISSN = {0025-5874,1432-1823},
   MRCLASS = {99-04},
  MRNUMBER = {1544909},
       DOI = {10.1007/BF01171098},
       URL = {https://doi.org/10.1007/BF01171098},
}

@article {SW99,
    AUTHOR = {Stein, Elias M. and Wainger, Stephen},
     TITLE = {Discrete analogues in harmonic analysis. {I}. {$l^2$}
              estimates for singular {R}adon transforms},
   JOURNAL = {Amer. J. Math.},
  FJOURNAL = {American Journal of Mathematics},
    VOLUME = {121},
      YEAR = {1999},
    NUMBER = {6},
     PAGES = {1291--1336},
      ISSN = {0002-9327,1080-6377},
   MRCLASS = {42B20 (11L07 11P55 39A12 42A50 44A12)},
  MRNUMBER = {1719802},
MRREVIEWER = {Loukas\ Grafakos},
       URL =
              {http://muse.jhu.edu/journals/american_journal_of_mathematics/v121/121.6stein.pdf},
}

@misc{paper2,
author = {Liao, Sung-Yi and Madrid, Jos\'e and Palsson, Eyvindur and Weigt, Julian},
title = {Connecting regularity of discrete and continuous maximal operators},
howpublished = {In preparation},
}

\end{document}